\renewcommand{\dashint}{\fint}
\begin{document}
	
\title{Double phase meets Muckenhoupt}%

\author{Daviti Adamadze}
 \address{Daviti Adamadze, University Bielefeld, Universit\"atsstrasse 25, 33615 Bielefeld, Germany}
\email{daviti.adamadze@uni-bielefeld.de}

\author{Lars Diening}
\address{Lars Diening, University Bielefeld, Universit\"atsstrasse 25, 33615 Bielefeld, Germany}
\email{lars.diening@uni-bielefeld.de}

\author{Tengiz Kopaliani}
 \address{Faculty of Exact and Natural Sciences, Javakhishvili Tbilisi State
University, 13, University St., Tbilisi, 0143, Georgia}
\email{tengiz.kopaliani@tsu.ge}

\author{Jihoon Ok}
\address{Jihoon Ok, Department of Mathematics, Sogang University, Seoul 04107, Republic of Korea}
\email{jihoonok@sogang.ac.kr}

\thanks{Daviti Adamadze and Lars Diening gratefully acknowledge the financial support by the Deutsche Forschungsgemeinschaft (DFG, German Research Foundation) IRTG 2235 - Project number 282638148 and SFB 1283/2 2021 – 317210226, respectively. Tengiz Kopaliani was supported by Shota Rustaveli National Science Foundation of Georgia FR-22 17770. Jihoon Ok was supported by the National Research Foundation of Korea by the Korean Government (NRF-2022R1C1C1004523).  }%
\subjclass{35B65, 42B25, 46E30}%
\keywords{Double Phase Potential, Muckenhoupt condition, maximal operator}%


\begin{abstract}
	In this paper we generalize the  famous result of~\textcite{FKS} to the double phase model. In particular, 
  we  work with minimal assumptions on the modulating coefficient by introducing a Muckenhoupt-type condition on generalized Orlicz spaces. We develop a complete theory equivalent to that of classical Muckenhoupt weights, including the boundedness of the maximal operator and Sobolev-\Poincare{} estimates. We combine this with the De~Giorgi technique to show H\"older continuity of the solutions.
\end{abstract}

\maketitle

\tableofcontents


\section{Introduction}
\label{sec:introduction}

In this paper we consider the well-known double phase model under minimal assumptions on the modulating coefficient. The double phase energy is given by
\begin{align}
  \label{eq:doublephase-energy-intro}
  \mathcal{J}(v) := \int_\Omega \tfrac 1p \abs{\nabla v(x)}^p+ \tfrac 1q a(x)\abs{\nabla v(x)}^q\,dx,
\end{align}
where $1<p<q<\infty$ and $a \,:\, \Omega \to [0,\infty)$ is the modulating coefficient. We are interested in the H\"older regularity of local minimizers. These satisfy the Euler-Lagrange equation
\begin{align}
 	\label{eq:pde}
	-\divergence\big(\abs{Du}^{p-2}Du + a(x)\abs{Du}^{q-2}Du\big)&= 0 
	\quad \text{in }\ \Omega.
\end{align}
The double phase model was introduced by Zhikov in~\cite[Example~3.1]{Zhik5} in the context of the Lavrentiev phenomenon. He showed that if the modulating coefficient is too bad, then minimizers over Sobolev functions and smooth functions may differ. This example was later refined in~\cite{ELM,BDS}.

However, if the modulating coefficient $a(x)$ is $C^{0,\alpha}$-H\"older continuous, then the Lavrentiev gap can vanish and solutions can be regular.  In fact, it has been shown by Baroni, Colombo and Mingione~\cite{BCMregularity,CoMi1}  that the solutions become $C^{1,\beta}$-H\"older continuous for some~$\beta>0$ provided that
\begin{align}
  \label{eq:gap-condition}
  \frac qp \le 1 + \frac \alpha n.
\end{align}
This condition \eqref{eq:gap-condition} can be relaxed if one already has some a~priori knowledge on additional regularity of~$u$ like boundedness, \cite{CoMi2,BCMregularity,HastoOk}. All of these results also extend to the weak concept of quasi-minimizers~\cite{Ok1}.  The sharpness of these conditions for general H\"older continuous~$a(x)$ follows from the sophisticated examples in~\cite{BDS,BalciDieningSurnachev2025} .
 In \cite{BalciDieningSurnachev2025}, they show that if~\eqref{eq:gap-condition} fails, then there exists a $C^{0,\alpha}$-modulating coefficient~$a(x)$ with a solution~$u$ that is not even in~$W^{1,p+\epsilon}$ for any~$\epsilon>0$. This improves a related result in~\cite{FonsecaMalyMingione2004}. Let us mention that in Example~\ref{exa:power} we show that the specific modulating coefficients~$a(x) = \min \set{\abs{x}^\alpha,1}$ with $\alpha>-n$ always ensure H\"older continuity of the solution although~\eqref{eq:gap-condition} fails for all~$\alpha \in (-n,n(\frac{q}{p}-1))$.

Up to now, all results on regularity of the solution to \eqref{eq:pde} require at least $C^{0,\alpha}$-H\"older continuity of the modulating coefficient together with  \eqref{eq:gap-condition}. Surprisingly, without this assumption, no regularity result --not even H\"older continuity-- is known to the best of our knowledge. It is the goal of this article to overcome this restrictive requirement and show regularity under minimal assumptions on the modulating coefficients. Inspired by the seminal work of Fabes, Kenig and Serapioni~\cite{FKS} we aim for a Muckenhoupt-type condition. They considered solutions of the general linear equations
\begin{align}
  \label{eq:FKS}
	-\divergence(A(x)Du) = 0,
\end{align}
with ellipticity $\lambda w(x)|\xi|^2 \le \langle A(x)\xi, \xi \rangle \le \Lambda w(x)|\xi|^2$. They proved that if the weight~$w$ is in the Muckenhoupt class~$A_2$, then the solutions are locally H\"{o}lder continuous. This result was generalized to the weighted $p$-Laplacian equation
\begin{align*}
	-\divergence\big(\langle A(x)\nabla u, \nabla u\rangle^{\frac{p-2}{2}} A(x)\nabla u\big) = 0
\end{align*}
under the condition $w^{\frac p2} \in A_p$ in~\cite{CMN}. We also recall that Modica \cite{Modica} was the first to establish H\"older continuity for quasi-minimizers of degenerate functionals controlled by Muckenhoupt weights. Results on higher regularity exist, but they require additional regularity assumptions on~$A(x)$, which we try to avoid in this article, see~\cite{BalciDieningGiovaPassarelli2022} and the references therein.

The results of \cite{FKS,CMN} suggest that a similar Muckenhoupt-type condition should be enough to guarantee H\"older continuity for the solutions of our double phase problem. This is exactly the purpose of this article. 
We will introduce a suitable Muckenhoupt condition, denoted by $\mathcal A$ (see \eqref{eq:classA}), and prove the following H\"older continuity result of  the weak solutions of \eqref{eq:pde}.
\begin{theorem}
  [H\"older regularity] \label{thm:local_Holder}
  Let $\phi(x,t)=\frac{1}{p}t^p+\frac{1}{q}a(x)t^q$, where $1<p<q$, $a:\RR^n\to [0,\infty)$, and $\phi\in\mathcal A$. If $u \in W^{1,\phi}(\Omega)$ is a weak solution of \eqref{eq:pde}, then $u$ is locally H\"older continuous. Moreover, there exists $\beta\in(0,1)$ depending on $n$, $p$, $q$ and $[\phi]_{\mathcal A}$ such that for every $4B\Subset \Omega$ with $\int_{4B}\phi(x,|\nabla u|)\, dx \leq 1$, we have that
$$
\|u\|_{L^\infty(B)} + r^\beta  [u]_{C^{0,\beta}(B)}  \le c r (M_{4B}\phi)^{-1}\left(\dashint_{4B} \phi\Big(x,\frac{|u-\mean{u}_{4B}|}{r}\Big)\, dx \right) + c M_{4B}u
$$
for some $c>0$ depending on $n$, $p$, $q$, and $[\phi]_{\mathcal A}$, where $r>0$ denotes the radius of $B$, $M_{4B}\phi(t)\coloneq \dashint_{4B}\phi(x,t)\,dx$, $M_{4B}u\coloneq \dashint_{4B}\abs{u(x)}\,dx$, $\mean{u}_{4B} \coloneq \dashint_{4B} u(x) \, dx$, and
$$
  [u]_{C^{0,\beta}(B)} := \sup_{x,y\in B,\, x\neq y} \frac{|u(x)-u(y)|}{|x-y|^\beta}.
$$
\end{theorem}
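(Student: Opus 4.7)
The plan is to run the De~Giorgi scheme in the generalized Orlicz framework supplied by the Muckenhoupt-type class $\mathcal A$; the two pillars are a Caccioppoli inequality for level sets of $u$ and a Sobolev-Poincar\'e inequality on $W^{1,\phi}$, with the strength of $\phi\in\mathcal A$ meant to replace the usual H\"older continuity of $a(x)$ together with the gap condition~\eqref{eq:gap-condition}. The Caccioppoli inequality
\[
\dashint_{\rho B}\phi\big(x,|\nabla (u-k)_{\pm}|\big)\,dx
\le c\,\dashint_{R B}\phi\Big(x,\tfrac{(u-k)_{\pm}}{R-\rho}\Big)\,dx,
\qquad 0<\rho<R\le 4r,
\]
should follow by testing \eqref{eq:pde} with $\eta^{q}(u-k)_{\pm}$ for a standard cutoff between the two balls and invoking the Young-type inequality naturally associated with $\phi$, the exponent $q$ in the cutoff being what allows both phases of $\phi$ to be absorbed simultaneously.

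Coupling this with the Sobolev-Poincar\'e inequality on $W^{1,\phi}$ available under $\phi\in\mathcal A$ (presumably established earlier in the paper and playing here the role of the weighted Sobolev inequality in \cite{FKS}), a standard De~Giorgi iteration on super-level sets $\{u>k\}$ yields local boundedness in precisely the form
\[
\sup_{B}(u-\mean{u}_{4B})_{\pm}
\le c\,r\,(M_{4B}\phi)^{-1}\!\left(\dashint_{4B}\phi\Big(x,\tfrac{|u-\mean{u}_{4B}|}{r}\Big)\,dx\right),
\]
and the elementary $|\mean{u}_{4B}|\le M_{4B}u$ then accounts for the $L^{\infty}$ part of the claim.

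H\"older continuity then follows from oscillation decay via a De~Giorgi alternative: on each ball $B'\subset 2B$, either the superlevel set $\{u\ge \mean{u}_{B'}+\tfrac14\operatorname{osc}_{B'}u\}\cap B'$ has measure at most $\gamma|B'|$ for a small threshold $\gamma=\gamma(n,p,q,[\phi]_{\mathcal A})$, in which case the boundedness estimate applied to the corresponding truncation yields $\operatorname{osc}_{\frac12 B'}u\le \theta\,\operatorname{osc}_{B'}u$ with $\theta<1$, or the symmetric bound holds for $-u$; iterating along a dyadic sequence of concentric balls produces $\operatorname{osc}_{\rho B}u\le c(\rho/r)^{\beta}$ for some $\beta=\beta(n,p,q,[\phi]_{\mathcal A})>0$, which together with the $L^{\infty}$ estimate gives the H\"older seminorm bound on the right-hand side.

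The main obstacle, I expect, will be the correct use of the $\mathcal A$-condition to deliver a Sobolev-Poincar\'e inequality compatible with the averaged Young function $M_{4B}\phi$ and scale-invariant in a way that makes the De~Giorgi exponent and constants depend only on $n,p,q,[\phi]_{\mathcal A}$ and not on the solution or the ball. Once this is in place the iteration itself is essentially routine, but extracting the right scale invariance from a weight-type condition that entangles the $p$- and $q$-phases, and in particular obtaining the self-improving higher integrability required to close the De~Giorgi loop, is presumably where most of the technical work lies.
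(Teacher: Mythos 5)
Your high-level scheme (Caccioppoli inequality, Sobolev--Poincar\'e under $\phi\in\mathcal A$, De~Giorgi iteration for local boundedness, oscillation decay for H\"older continuity) matches the paper's, and the $L^\infty$ half of the argument is essentially correct. However, the oscillation-decay step as you have stated it contains a genuine gap. Your ``De~Giorgi alternative'' asserts that either $\{u\ge \mean{u}_{B'}+\tfrac14\operatorname{osc}_{B'}u\}\cap B'$ has measure at most $\gamma|B'|$ for a small $\gamma$ depending only on the data, or the symmetric bound holds for $-u$. This dichotomy is false for small $\gamma$: both level sets may have density comparable to $1/2$. The only dichotomy that holds unconditionally is the one with $\gamma=\tfrac12$, and to pass from density $\tfrac12$ to small density (at which point the $L^\infty$ estimate would indeed give oscillation decay) one needs a genuine density-improvement lemma. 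That lemma is the heart of the De~Giorgi argument here, and it is entirely absent from your proposal.

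In the paper this is Lemma~\ref{lem:density0}: if a nonnegative supersolution $w$ has $|\{w\ge\nu\}\cap 2B|\ge\sigma|2B|$, then $\inf_B w\ge\epsilon\nu$. Its proof has two substantive steps that you would need to supply. First, one iterates the De~Giorgi isoperimetric inequality \eqref{poincaretype1} on the dyadic level sets $\mathcal F_j=\{k_{j+1}<w\le k_j\}$, using the Caccioppoli estimate and the improved Jensen/Sobolev inequality (Lemma~\ref{lem:Improved_inside}) to bound $\big(\fint \indicator_{\mathcal F_j}|\nabla w|^s\big)^{1/s}$ by $k_j/r$, and a summation argument (Lemma~\ref{lem:sequence}) to deduce that the measure of $\{w\le 2\epsilon\nu\}$ can be driven below any prescribed $\tau$. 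Second, one applies the $L^\infty$ estimate with zero set (Corollary~\ref{thm:local_bound}) to $v=(2\epsilon\nu-w)_+$ together with the reverse H\"older inequality \eqref{eq:phi_RH} to absorb the small-density factor. This step is where the lack of homogeneity bites: because the Sobolev--Poincar\'e inequality only holds under $\int_{4B}\phi(x,|\nabla w|)\,dx\le 1$, one cannot simply normalize $w$ by dividing through by $\nu$ or by $\operatorname{osc} u$, as one does in the classical $L^p$ setting. The paper circumvents this by exploiting that $|\nabla w|=|\nabla u|$ for the truncations $w=u-\inf_{4B}u$ and $w=\sup_{4B}u-u$, so the gradient constraint is preserved across the two-alternative argument. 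You flag the scale-invariance issue at the end of your proposal, but you do not resolve it; as written, the argument cannot close.
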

As mentioned above the famous paper of~\cite{FKS} proves H\"older regularity of solution of the weighted linear equation~\eqref{eq:FKS}. They realized that the Moser iteration technique as well as the De~Giorgi truncation technique can be applied, once a certain set of \Poincare{} and Sobolev-\Poincare{} inequalities are established. So the main work in their article consists in deriving those estimates for Lebesgue and Sobolev spaces with Muckenhoupt weights.

Our situation is quite similar. The hard step is to derive certain \Poincare{} and Sobolev-\Poincare{} inequalities that are strong enough to apply the De~Giorgi technique. However, for the double phase problem~\eqref{eq:pde} we need to work in the context of generalized Orlicz spaces. Different from the case of weighted Lebesgue spaces, our spaces are not homogeneous. This adds a significant amount of difficulties to our setting.  Let us already mention that the main step in the derivation of \Poincare{} and Sobolev-\Poincare{} inequalities is a generalized version of Jensen's inequality, see Theorem~\ref{thm:Jensen}.

As in the paper~\cite{FKS} we want to work under minimal regularity assumptions on the modulating coefficient. This requires to introduce Muckenhoupt type condition~$\phi \in \mathcal{A}$. This condition provides the optimal setup for the De~Giorgi technique under minimal assumptions. We explain below more on this new Muckenhoupt condition adapted to the double phase problem. The use of this condition allows us to treat even discontinuous modulating coefficients (see Examples~\ref{exa:Aq} and \ref{exa:min-max}).

Let us mention that the lack of homogeneity complicates the application of the De~Giorgi method at several steps. The \Poincare{} and Sobolev~\Poincare{} inequalities only hold for functions with bounded norm. However, the scaling argument, which requires homogeneity, is a fundamental tool in the De~Giorgi technique when passing from~$L^\infty$ estimates to H\"older estimates. This difficulty significantly complicates the proof of the density improvement lemma, Lemma~\ref{lem:density0}.

Furthermore, we emphasize the flexibility of our approach. As outlined above, our technique hinges fundamentally on the generalized Jensen's inequality, see Theorem~\ref{thm:Jensen}. Consequently, our arguments are in general not limited to the double phase structure and can be applied to other models satisfying this version of generalized Jensen's inequality.  For an excellent overview of regularity results for minimizers in generalized Orlicz-Sobolev spaces for various models and various conditions stronger than Muckenhoupt, see~\cite{HastoOk2022,hasto2025meanoscillationconditionsnonlinear}. Note that a higher integrability result in the weighted setting under generalized Orlicz growth conditions was recently established in \cite{HietanenLee}.

The question of finding a natural Muckenhoupt condition for the double phase model is quite delicate and has to be considered in the larger framework of generalized Orlicz spaces. We keep the notation in this introduction brief, but more details and exact definitions are provided in  Section~\ref{sec:double-phase-model}. The standard weighted Lebesgue spaces correspond to the generalized Young function $\phi(x,t) = w(x)t^p$. Then the standard Muckenhoupt condition~$w \in A_p$ is rephrased in terms of indicator functions of cubes, i.e. we say $\phi \in \mathcal{A}$ if
\begin{align}
  \label{eq:classApre}
  [\phi]_{\mathcal{A}}:= \sup_Q \frac{\norm{\indicator_Q}_\phi \norm{\indicator_Q}_{\phi^*}}{\abs{Q}} < \infty,
\end{align}
where $\norm{\cdot}_\phi$ is the norm in the generalized Orlicz space~$L^\phi$ and $\phi^*$ is the conjugate of~$\phi$. This definition extends naturally to all generalized Orlicz spaces. However, it is not obvious if it ensures similar good properties as in the case of weighted Lebesgue spaces.
In fact, if $w \in A_p$ and $1<p <\infty$, then the maximal operator~$M$ (see \eqref{eq:Mf}) is bounded on the weighted space $L^p_w$ and its dual $L^{p'}_\sigma$ with $p'=\frac{p}{p-1}$ and $\sigma= \omega^{-\frac{p'}{p}}$. In other words~\eqref{eq:classApre} implies the boundedness of $M$ on $L^\phi$ and $L^{\phi^*}$ for $\phi(x,t) = w(x)t^p$.

Our Muckenhoupt condition~\eqref{eq:classApre} also appears in the context of variable exponent spaces~$L^{p(\cdot)}$, where $\phi(x,t) = \tfrac{1}{p(x)} t^{p(x)}$. These spaces have been studied intensively in the last twenty years. Most papers assume the so-called log-H\"older continuity of the exponent together with certain decay conditions at infinity, which implies the boundedness of~$M$, see \cite{Die2,UribeFiorenzaNeugebauer2003,Nekvinda2004}. We refer in particular to the books~\cite{DHHR,CUF}. It turns out that the Muckenhoupt condition~$\phi \in \mathcal{A}$ is not sufficient for the boundedness of~$M$ on~$L^{p(\cdot)}$, see \cite[Theorem~5.3.4]{DHHR}. Boundedness of~$M$ can only be guaranteed under an additional decay condition at infinity, see \cite{Lerner2010questions,AdamadzeDieningKopaliani2026} and \cite[Theorem~4.52]{CUF}. We will see in Theorem~\ref{thm:M_bounded} that surprisingly  the Muckenhoupt condition $\phi \in \mathcal{A}$ is equivalent to the boundedness of~$M$ in the case of the double phase model without any additional condition. Note that the boundedness of~$M$ on weighted generalized Orlicz spaces with classical $A_p$ weights was established in \cite{VerttiHietanen}.

Note that most of the papers on variable exponents and their related PDE assume the stronger condition  log-H\"older continuity on the variable exponent, see~\cite[Definition~4.1.1]{DHHR}. Passing from the $C^{0,\alpha}$-assumption on the modulating coefficient for the double phase problem to a Muckenhoupt condition is similar to passing from log-H\"older continuity to a Muckenhoupt condition for variable exponents. However, up to now there exist no regularity results for PDE with variable exponents assuming a Muckenhoupt type condition.

The techniques from variable exponent spaces based on log-H\"older continuity have been generalized to the case of generalized Orlicz spaces. Most prominent is the book of H\"asto and Harjulehto, see \cite{HastoHarjulehto2019}. The assumptions on~$\phi(x,t)$ are given by three axioms. Axiom (A1) and (A2) generalize the local log-H\"older continuity and the log decay from the case of variable exponents. The third axiom (A0) states basically that $\phi(x,1) \eqsim 1$, which excludes the weighted case. A generalization of this theory that includes all weighted spaces is still out of reach. Since our example for the double phase model in Section~\ref{sec:examples} also includes unbounded modulating coefficients, we are in the setting without~(A0). This is an important step towards the general situation.

In this paper we also include some important results on the boundedness of the maximal operator on generalized Orlicz spaces. As we mentioned above the maximal operator is bounded on weighted Lebesgue spaces $L^p_\omega$ if and only if it is bounded on its dual space $L^{p'}_\sigma$. The same property surprisingly holds for variable exponent spaces~\cite{Die1} and \cite[Chapter~5]{DHHR}. The property that $M$ is bounded on~$L^\phi$ and its dual $L^{\phi^*}$ (duality property of~$M$) is of fundamental importance, since it is equivalent to the boundedness of \Calderon-Zygmund operators and the Riesz transforms, see \cite[Theorem~1.2]{Zoe}. The question of the duality property of~$M$ has been raised and studied by Lerner. In~\cite{Ler17} he considers the case of weighted variable Lebesgue spaces. He proved the duality property for these spaces but he unfortunately requires an additional~$A_\infty$ assumption. The open question of the duality property of~$M$ is stated in~\cite{Zoe} in greater generality. In this article, we show that the double phase function space has indeed the duality property of~$M$, see Theorem~\ref{thm:M_bounded}. This is a consequence of the invariance of the Muckenhoupt condition~$\phi \in \mathcal{A}$ under duality and the characterization of the boundedness of~$M$ by $\phi \in \mathcal{A}$.

In the case of generalized Orlicz spaces there are two types of boundedness of~$M$. The classical one is in terms of norms, i.e. $\norm{Mf}_\phi \lesssim \norm{f}_\phi$. The second is the so-called modular boundedness, which is stronger and more useful for variational problems, i.e.
\begin{align}
  \label{eq:modular-boundeness}
	\int_{\RRn}\phi(x,\abs{Mf(x)})dx \lesssim \int_{\RRn}\phi(x,\abs{f(x)})dx.
\end{align}
Although this estimate holds for weighted~$L^p$ spaces as well as for (unweighted) Orlicz spaces, in general, it is too strong to be valid. One has to replace it with restricted modular boundedness
\begin{align}
  \label{eq:modular-boundeness-restricted}
	\int_{\RRn}\phi(x,\abs{Mf(x)})dx \lesssim \int_{\RRn}\phi(x,\abs{f(x)})dx \qquad \text{for all $f$ with $\norm{f}_\phi  \leq 1$}.
\end{align}
Even the restricted inequality \eqref{eq:modular-boundeness-restricted} is too strong for certain generalized Orlicz spaces. For instance, in variable exponent spaces, \eqref{eq:modular-boundeness-restricted} holds only if the exponent is constant; see~\cite{Lerner2005}. This is proved by Lerner for~\eqref{eq:modular-boundeness} but the proof also applies to~\eqref{eq:modular-boundeness-restricted}.  Thus, it is rather surprising that restricted modular boundedness of~$M$ holds for the double phase model, see Theorem~\ref{thm:M_bounded}.

Overall, we extend the work of \cite{FKS,CMN} to the setting of the double phase model under the minimal Muckenhoupt assumption on the modulating coefficient. Moreover, we characterize the boundedness of the maximal operator completely by this Muckenhoupt condition. In Section~\ref{sec:double-phase-model} we combine all results related to the function spaces. In Section~\ref{sec:regularity-results} we prove the H\"older continuity of the solutions of~\eqref{eq:pde}.
	
\section{The double phase model and function spaces}
\label{sec:double-phase-model}

In this section we introduce the double phase model and investigate the corresponding function spaces. In particular, we will introduce the Muckenhoupt condition that characterizes the boundedness of the maximal operator. We also investigate left-openness results, the boundedness of the maximal operator on the dual space and Sobolev--Poincar\'e-type inequalities.

\subsection{The model}

We start with a bit of notation. Let $\Omega$ be any open subset of $\RRn$. By $L^0(\Omega)$ we denote the space of measurable functions on~$\Omega$. By $L^p(\Omega)$ we denote the usual Lebesgue space and by $L^1_{\loc}(\Omega)$ the space of locally integrable functions ($L^1$ on compact subsets). 


We can now describe the double phase model as introduced in \cite{Zhik1}.  For $1<p<q<\infty$ and a weight $a \in L^1_{\loc}(\RRn)$ with $a \geq 0$, 
set
\begin{align}
  \phi(x,t)=\tfrac 1p t^p+ \tfrac 1q a(x)  t^q.
\end{align}
We remark that, by a slight abuse of notation, we use the symbol $\phi$ to denote both the double phase function and general $N$-functions throughout the paper. However, whenever the precise form is required, it will be explicitly stated.
We are later interested in the regularity of weak solutions of the Euler--Lagrange equation corresponding to the energy
\begin{align*}
  \mathcal{J}(v) = \int_\Omega \phi(x,\abs{\nabla v})\,dx.
\end{align*}
Although the PDE later is defined on a domain~$\Omega \subset \RRn$ we assume for the sake of simplicity that the coefficient $a$ is defined on all of~$\RRn$.

If $a=0$, then the model just describes the $p$-Laplacian. For $a > 0$ the model belongs to the class of $p$-$q$ growth problems introduced and investigated by~\cite{Mar}. These problems have a lower $p$-growth and an upper $q$-growth. For general coefficients the solutions to the corresponding PDE can be quite irregular and the Lavrentiev gap phenomenon can occur \cite{Zhik3,ELM,BDS}. To avoid this problem and to show some regularity of the solution, some condition on the modulating coefficient has to be assumed. Usually, a certain type of H\"older continuity for the coefficient~$a$ is assumed. We will avoid such a strong regularity for the coefficient~$a$ and replace it later by a Muckenhoupt-type condition, which we will introduce in Section~\ref{sec:muck-cond}.

Our function~$\phi$ is a generalized N-function (a sub-class of generalized Young functions). The corresponding spaces have been investigated for example in~\cite[Section~2]{DHHR} and \cite{HastoHarjulehto2019}. We briefly repeat some basic properties of this theory. A function~$\phi\,:\, \RRn \times [0,\infty) \to [0,\infty)$ is a generalized N-function if
\begin{enumerate}
\item $x \mapsto \phi(x,t)$ is measurable for all $t \geq 0$.
\item $t \mapsto \phi(x,t)$ is an N-function for a.e. $x \in \RRn$, i.e. it is continuous, convex, and satisfies $ \lim_{t\to 0}\frac{\phi(x,t)}{t}=0$ and $\lim_{t\rightarrow \infty}\frac{\phi(x,t)}{t}=\infty$.  It has a right derivative $\phi'(x,t)$ with $\phi(x,t) = \int_0^t \phi'(x,s)\,ds$.
\end{enumerate}
By $(\phi')^{-1}(x,\cdot)$ we denote the right-continuous inverse of $t \mapsto \phi'(x,t)$. We define the function $\phi^*(x,t) \coloneqq \int_0^t (\phi')^{-1}(x,s)\,ds$, which is itself a generalized $N$-function satisfying $(\phi^*)' = (\phi')^{-1}$. This function is known as the conjugate $N$-function and can equivalently be characterized by
\begin{align}
	\label{eq:conjugate}
	\phi^*(x,s) = \sup_{t \geq 0} \big(st - \phi(x,t)\big) \qquad \text{for all $x \in \RRn$ and $s \geq 0$}.
\end{align}
Moreover, $(\phi^*)^* = \phi$. The estimate $st \leq \phi(x,t) + \phi^*(x,s)$ is known as Young's inequality. We use the notation $f \lesssim g$ and $f \eqsim g$ if $f \le c g$ and $c_1 f \le g \le c_2 f$ hold for some positive constants $c, c_1, c_2$, respectively.

It is difficult to write the conjugate function $\phi^*$ for our double phase model explicitly. However, we have the following characterization.
\begin{lemma}
  \label{lem:phi*}
  Let $\phi(x,t)=\tfrac 1p t^p+ \tfrac 1q a(x)t^q$ with $1<p<q < \infty$ and $a\,:\, \RRn \to [0,\infty)$ be measurable.  There hold
  \begin{align*}
    (\phi^*)'(x,t) &\eqsim \min\bigset{t^{p'-1},\,\, a(x)^{1-q'}t^{q'-1}},
    \\
    \phi^*(x,t) &\eqsim \min\bigset{t^{p'},\,\, a(x)^{1-q'}t^{q'}}.
  \end{align*}
  The implicit constants only depend on $p$ and $q$.
\end{lemma}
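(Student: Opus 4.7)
The plan is to derive the estimate for $(\phi^*)'$ first and then recover $\phi^*$ itself by integration (essentially by the principle $\psi \eqsim t\psi'$). The starting point is the explicit formula $\phi'(x,t) = t^{p-1} + a(x)\, t^{q-1}$. Using the elementary observation $u + v \eqsim \max\{u,v\}$ for $u,v \geq 0$, this gives
\[
\phi'(x,t) \eqsim \max\bigset{t^{p-1},\, a(x)\, t^{q-1}},
\]
with absolute implicit constants.

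Next I would pass to the right-continuous inverse. Since $(\phi^*)' = (\phi')^{-1}$ by the very definition of $\phi^*$, and since the pointwise inverse of a maximum of two strictly increasing functions is the minimum of their individual inverses, I would argue case-by-case (according to which of $t^{p-1}$ and $a(x) t^{q-1}$ dominates $\phi'(x,t)$) to conclude
\[
(\phi^*)'(x,t) \eqsim \min\bigset{t^{p'-1},\, a(x)^{1-q'}\, t^{q'-1}}.
\]
This is the first claim. The small subtlety here—namely, that an $\eqsim$-equivalence need not be preserved under taking inverses in general—is bypassed because both branches $t \mapsto t^{p-1}$ and $t \mapsto a(x)t^{q-1}$ are homogeneous power functions; inverting each branch separately and then taking the minimum produces a function comparable to $(\phi')^{-1}$, with constants depending only on $p$ and $q$.

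For the statement about $\phi^*$ itself, I would invoke the general fact that any $N$-function $\psi$ satisfying the $\Delta_2$-condition fulfils $\psi(x,t) \eqsim t\, \psi'(x,t)$: the upper bound is immediate since $\psi' $ is nondecreasing and $\psi(x,t)=\int_0^t \psi'(x,s)\,ds \le t \psi'(x,t)$, while the lower bound $\psi(x,t) \geq \tfrac{t}{2}\psi'(x,t/2) \gtrsim t\psi'(x,t)$ uses monotonicity followed by $\Delta_2$. Our $\phi$ clearly lies in $\Delta_2 \cap \nabla_2$ with constants depending only on $p$ and $q$ (since $p < q < \infty$), so $\phi^*$ inherits $\Delta_2$, and therefore
\[
\phi^*(x,t) \eqsim t\,(\phi^*)'(x,t) \eqsim t \cdot \min\bigset{t^{p'-1},\, a(x)^{1-q'}\, t^{q'-1}} = \min\bigset{t^{p'},\, a(x)^{1-q'}\, t^{q'}}.
\]

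The main (and only) delicate step is the inversion argument in the first paragraph; everything else is mechanical. Once the case split according to the size of $a(x)\,t^{q-p}$ is carried out—equivalently, according to whether $t \lesssim a(x)^{(q-1)/(q-p)}$ holds or not—the two-sided bounds come with explicit constants depending only on $p$ and $q$, yielding the claimed implicit constants in both equivalences.
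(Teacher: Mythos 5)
Your proof is correct and follows essentially the same route as the paper: rewrite $\phi'$ as a maximum of two power-law branches, invert to get $(\phi^*)' = (\phi')^{-1}$ as the minimum of the inverse branches, then use $\phi^*(x,t)\eqsim t\,(\phi^*)'(x,t)$. You are in fact slightly more explicit than the paper in justifying why the $\eqsim$-equivalence survives inversion (the branches are homogeneous power functions, so rescaling the argument only changes constants), a point the paper leaves implicit.
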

\begin{proof}
  We start with
  \begin{align*}
    \phi'(x,t)=t^{p-1}+a(x)t^{q-1} \eqsim  \max\bigset{t^{p-1},\,\,a(x)t^{q-1}}.
  \end{align*}
  The formula $(\phi^*)' = (\phi')^{-1}$ then implies
  \begin{align*}
    (\phi^*)'(x,t)\eqsim \min\bigset{t^{p'-1},\,\, a(x)^{1-q'}t^{q'-1}}
  \end{align*}
  with the convention $0^{1-q'} = \infty$. This proves the first part of the claim. The formula $\phi^*(x,t) \eqsim (\phi^*)'(x,t)\,t$ proves the second part.
\end{proof}            
A direct calculation shows that
\begin{alignat}{2}
	\label{eq:index-sim}
	p &\leq \;\;\frac{\phi'(x,t)\,t}{\phi(x,t)} &&\leq q,
	\\  \label{eq:index-unif}
	p &\leq \frac{\phi''(x,t)\,t}{\phi'(x,t)} + 1 &&\leq q.
\end{alignat}
The first inequality~\eqref{eq:index-sim} says that $p$ and~$q$ are lower and upper Simonenko indices (note that on the set $\{a=0\}$, both indices are equal to $p$). The second inequality~\eqref{eq:index-unif} basically says that $\phi(x,\cdot)$ is uniformly convex with indices~$p$ and~$q$ (similarly, on the set $\{a=0\}$, both indices are equal to $p$), compare\footnote{Translated to~\cite{DFTW} we have~$c_6=p-1$ and $c_7=q-1$.} \cite[Appendix~B]{DFTW}.
The estimates~\eqref{eq:index-sim} and~\eqref{eq:index-unif} are equivalent to the following ones: For all $s,t \geq 0$
\begin{alignat}{2}
  \label{eq:lower-upper1}
  \min \set{s^p,s^q} \phi(x,t) \leq
  \phi(x,st) &\leq \max \set{s^p,s^q} \phi(x,t),
  \\
  \label{eq:lower-upper2}
  \min \set{s^{p-1},s^{q-1}} \phi'(x,t) \leq
  \phi'(x,st) &\leq \max \set{s^{p-1},s^{q-1}} \phi'(x,t).
\end{alignat}
In particular, $\phi$ satisfies the $\Delta_2$-condition $\phi(x,2t)\leq 2^q \phi(x,t)$.
The conjugate~$\phi^*$ is also uniformly convex with indices $q'= \frac{q}{q-1}$ and $p'=\frac{p}{p-1}$, see \cite[Lemma~29]{DFTW}. Thus, the formulas~\eqref{eq:index-sim}, \eqref{eq:index-unif}, \eqref{eq:lower-upper1} and \eqref{eq:lower-upper2} hold with $\phi$, $p$, $q$ replaced by $\phi^*$, $q'$ and $p'$, respectively.

\subsection{Generalized Orlicz function spaces}
The generalized N-function~$\phi$ defines a generalized Orlicz space, as detailed in~\cite[Section~2]{DHHR} and \cite{HastoHarjulehto2019}. 
For a function $f \in L^0(\Omega)$ we define the modular
\begin{align}
  \label{eq:modular}
  \rho_\phi(f) &:= \int_\Omega  \phi(x, \abs{f(x)})\,dx
\end{align}
and the Luxemburg norm
\begin{align*}
  \norm{f}_{L^{\phi}(\Omega)}\coloneqq\inf\{\lambda>0\,:\rho_\phi(|f|/\lambda)\leq1\}.
\end{align*}
For convenience, we write $\norm{f}_{\phi} := \norm{f}_{L^{\phi}(\RRn)}$. Then the generalized Orlicz space $L^{\phi}(\Omega)$ (also known as the Musielak-Orlicz space) is defined by
\begin{align*}
  L^{\phi}(\Omega)\coloneqq\bigset{ f \in L^0(\Omega)\,:\, \norm{f}_{L^{\phi}(\Omega)}<\infty}.
\end{align*}
The generalized Orlicz-Sobolev space $W^{1,\phi}(\Omega)$ is the set of all functions $u \in L^\phi(\Omega)$ for which the weak derivatives $D_ju$ belong to $L^\phi(\Omega)$ for each $j=1,\dots,n$, equipped with the norm $\|u\|_{W^{1,\phi}(\Omega)} = \|u\|_{L^\phi(\Omega)} + \|Du\|_{L^\phi(\Omega)}$. We denote by $W_0^{1,\phi}(\Omega)$ the space defined by the intersection $W^{1,1}_0(\Omega)\cap W^{1,\phi}(\Omega)$, equipped with the norm $\|\cdot\|_{W^{1,\phi}(\Omega)}$. Alternatively, $W_0^{1,\phi}(\Omega)$ can be characterized as the set of functions $\{ u \in W^{1,\phi}(\Omega) : \tilde{u} \in W^{1,\phi}(\mathbb{R}^n) \}$, where $\tilde{u}$ denotes the extension of $u$ by zero to $\mathbb{R}^n \setminus \Omega$.
It is well-known that $L^\phi(\Omega)$ and $W^{1,\phi}(\Omega)$ are uniformly convex Banach spaces under suitable assumptions on $\phi$ (see, e.g., \cite[Remark~2.4.15]{DHHR}; see also \cite[Theorem 6.1.4]{HastoHarjulehto2019}). We note that these assumptions hold for the double phase model considered in this paper.
For all $f\in L^{\phi}(\Omega)$ and $g\in L^{\phi^{\ast}}(\Omega)$ we have the H\"{o}lder inequality
\begin{align}
  \label{eq:hoelder}
  \int_{\Omega}\abs{f} \,\abs{g} \,dx\leq 2 \|f\|_{L^{\phi}(\Omega)}\|g\|_{L^{\phi^{\ast}}(\Omega)},
\end{align}
see \cite[Lemma~2.6.5]{DHHR}.

Another useful equality is the \emph{unit ball property}, see \cite[Lemma~2.1.14]{DHHR}, i.e. for all $f \in L^\phi(\Omega)$ we have
\begin{equation}
	\label{eq:unit-ball}
	\int_{\Omega} \phi\bigl(x, \abs{f(x)} / \norm{f}_{L^\phi(\Omega)}\bigr)\,dx = 1.
\end{equation}
 Also, we have the following useful characterization of the norm, see \cite[Corollary~2.7.5]{DHHR}:
	\begin{lemma}[Norm conjugate formula]
		\label{lem:norm-conjugate-formula}
		Let $\phi$ be a generalized $N$-function. Then for all $f \in L^0(\Omega)$ there holds
		\begin{align*}
			\norm{f}_{L^\phi(\Omega)} \leq \sup_{g \in L^{\phi^*}(\Omega)\,:\, \norm{g}_{L^{\phi^*}(\Omega)} \leq 1} \int_\Omega \abs{f g}\,dx \leq 2 \norm{f}_{L^\phi(\Omega)}.
		\end{align*}
	\end{lemma}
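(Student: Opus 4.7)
The plan is the following. The upper bound with constant $2$ is immediate from the generalized H\"older inequality \eqref{eq:hoelder}: for any $g$ with $\|g\|_{L^{\phi^*}(\Omega)} \leq 1$,
\begin{equation*}
  \int_\Omega |fg|\,dx \leq 2\,\|f\|_{L^\phi(\Omega)}\,\|g\|_{L^{\phi^*}(\Omega)} \leq 2\,\|f\|_{L^\phi(\Omega)},
\end{equation*}
so taking the supremum yields the right inequality.

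For the left inequality, I would identify the supremum with the Amemiya representation and then compare the latter with the Luxemburg norm. Let $N(f)$ denote the supremum in the statement, and set $A(f):=\inf_{k>0}(1+\rho_\phi(kf))/k$. By the unit ball property \eqref{eq:unit-ball} applied to $\phi^*$, the constraint $\|g\|_{L^{\phi^*}(\Omega)}\le 1$ is equivalent to $\rho_{\phi^*}(g)\le 1$. Young's inequality $k|f||g|\le\phi(x,k|f|)+\phi^*(x,|g|)$, after division by $k$ and taking suprema, gives $N(f)\leq A(f)$. For the reverse direction, the Fenchel--Young equality $\phi'(x,t)\,t=\phi(x,t)+\phi^*(x,\phi'(x,t))$ suggests the extremal choice $g_k:=\phi'(\cdot,k|f|)\operatorname{sgn}(f)$, for which
\begin{equation*}
  \int_\Omega |f g_k|\,dx \;=\; \tfrac{1}{k}\bigl(\rho_\phi(kf)+\rho_{\phi^*}(g_k)\bigr).
\end{equation*}
Selecting $k>0$ so that $\rho_{\phi^*}(g_k)=1$ --- possible by continuity in $k$, guaranteed by the $\Delta_2$ conditions on $\phi$ and $\phi^*$ encoded in \eqref{eq:lower-upper1} --- yields $N(f)\geq (1+\rho_\phi(kf))/k\geq A(f)$. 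Finally, at any (almost) minimizing $k$ one has $kA(f)=1+\rho_\phi(kf)\geq 1$, and convexity of $\phi(x,\cdot)$ with $\phi(x,0)=0$ delivers
\begin{equation*}
  \rho_\phi\bigl(f/A(f)\bigr) \;\leq\; \rho_\phi(kf)/(kA(f)) \;=\; 1-1/(kA(f)) \;\leq\; 1,
\end{equation*}
so by the definition of the Luxemburg norm $\|f\|_{L^\phi(\Omega)}\leq A(f)=N(f)$, closing the chain.

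The main obstacle is the identity $N(f)=A(f)$: the naive extremizer $g=\phi'(\cdot,|f|)\operatorname{sgn}(f)$ need not sit in the unit ball of $L^{\phi^*}(\Omega)$, since \eqref{eq:index-sim} only yields $\rho_{\phi^*}(g)\leq(q-1)\rho_\phi(f)$. The Amemiya device, with its free parameter $k$, is precisely what allows $\rho_\phi(kf)$ and $\rho_{\phi^*}(g_k)$ to be rescaled so that $\rho_{\phi^*}(g_k)=1$ exactly, thereby recovering the sharp constant $1$ on the left. The argument mirrors the proof of \cite[Corollary~2.7.5]{DHHR} to which the lemma is attributed.
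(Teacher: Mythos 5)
Your overall scheme is the correct one and it mirrors the standard route taken in \cite[Lemmas~2.7.2--2.7.3, Corollary~2.7.5]{DHHR}, to which the paper defers without reproducing an argument: H\"older gives the constant~$2$, and the Amemiya norm $A(f)=\inf_{k>0}(1+\rho_\phi(kf))/k$ bridges the Luxemburg norm to the dual pairing via Young's inequality in one direction and the Fenchel--Young equality with the extremizer $g_k=\phi'(\cdot,k|f|)\,\operatorname{sgn}(f)$ in the other. The identity $k\int|fg_k|=\rho_\phi(kf)+\rho_{\phi^*}(g_k)$ and the convexity step $\rho_\phi(f/A(f))\le\rho_\phi(kf)/(kA(f))\le 1$ are both correct.

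Two points about scope that you should make explicit, since the lemma as stated asserts the formula for \emph{every} generalized $N$-function while your proof does not. First, the continuity-in-$k$ argument used to solve $\rho_{\phi^*}(g_k)=1$ needs $\rho_\phi(kf)<\infty$ for all $k$; this is a $\Delta_2$-type consequence that you draw from \eqref{eq:lower-upper1}, so your argument is valid for the double phase $\phi$ (and any $\Delta_2$-regular $N$-function with continuous derivative), but it does not reproduce \cite[Corollary~2.7.5]{DHHR} in the generality in which it is cited, where $\phi'(x,\cdot)$ may jump and $\rho_\phi(kf)$ may fail to be finite or continuous in $k$. Second, the case $f\notin L^\phi(\Omega)$, i.e. $\norm{f}_{L^\phi(\Omega)}=\infty$, is not covered: the extremizer $g_k$ then need not lie in $L^{\phi^*}$, and you have to pass through truncations $f_m:=\min(|f|,m)\indicator_{B_m}\uparrow|f|$ together with the Fatou property of the Luxemburg norm to conclude that the supremum is $+\infty$. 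Finally, the phrase ``at any (almost) minimizing $k$'' should be tightened: under $\Delta_2$ the Simonenko bounds \eqref{eq:index-sim} force a minimizing sequence $k_n$ for $A(f)$ to remain in a compact subset of $(0,\infty)$, so the infimum is attained at some $k^*>0$ with $k^*A(f)=1+\rho_\phi(k^*f)\ge 1$, which is exactly what the convexity estimate requires; stating this removes the hand-waving.
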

\subsection{Muckenhoupt condition}
\label{sec:muck-cond}
As mentioned in the introduction, we are interested in the boundedness of the Hardy--Littlewood maximal operator, which is defined for $f\in L^{1}_{\loc}(\RRn)$ by
\begin{align}
  \label{eq:Mf}
  Mf(x)=\sup_{x\in Q}\dashint_Q|f(y)|\,dy = \sup_{x\in Q} \frac{1}{\abs{Q}} \int_Q|f(y)|\,dy,
\end{align}
where $\abs{Q}$ is the Lebesgue measure of~$Q$. The supremum is taken over all cubes (with sides parallel to the coordinate axes) containing the point $x$. It would also be possible to use balls or centered balls.

We say that $\phi$ satisfies the Muckenhoupt condition, i.e. $\phi \in \mathcal{A}$, if
\begin{align}
  \label{eq:classA}
  [\phi]_{\mathcal{A}}:= \sup_Q \frac{\norm{\indicator_Q}_\phi \norm{\indicator_Q}_{\phi^*}}{\abs{Q}} < \infty,
\end{align}
where the supremum is taken over all cubes $Q$ in $\RRn$ and $\indicator_Q$ is the indicator function of $Q$. (One could also use balls.) This condition is denoted by $\mathcal{A}_{\loc}$ in \cite{DHHR}. Moreover, $\phi \in \mathcal{A}$ is equivalent to $\phi^* \in \mathcal{A}$ and $[\phi]_{\mathcal{A}} = [\phi^*]_{\mathcal{A}}$.
\begin{remark}
\label{rem:muckenhoupt-classical}
The condition $\phi \in \mathcal{A}$ is a natural generalization of the classical Muckenhoupt condition for a weight $w$. In particular, for $1<p < \infty$, a weight $w$ belongs to the class $A_p$ by definition if
\begin{align}
\label{eq:muckenhoupt-classical}
[w]_{A_{p}}\coloneq\sup_Q\Bigg( \bigg(\fint_Qw(x)\,dx\bigg)\bigg(\fint_Qw(x)^{-\frac{p'}{p}}\,dx\bigg)^{\frac{p}{p'}}\Bigg)<\infty.
\end{align}
Then one can show that $w \in A_p$ if and only if $\phi \in \mathcal{A}$ with $\phi(x,t) = w(x) t^p$.
\end{remark}
In Section~\ref{sec:examples} we will provide specific examples for the modulating coefficient~$a$ such that $\phi \in \mathcal{A}$ for our double phase model $\phi(x,t) = \tfrac 1p t^p + \frac 1q a(x)t^q$.
Note that $\phi \in \mathcal{A}$ is equivalent to the uniform boundedness of the operators $f \mapsto \indicator_Q \dashint_Q \abs{f}\,dx$ on $L^\phi(\RRn)$. This follows exactly as in \cite[Theorem~4.5.7]{DHHR}. 
\begin{remark}
  \label{rem:A-additive}
  In the general setting of generalized Young functions, we obtain the following interesting result that the class $\mathcal{A}$ is closed under addition. 
  First, we observe that $\norm{f}_{\phi_1+\phi_2} \leq \norm{f}_{\phi_1} + \norm{f}_{\phi_2}$ and $\norm{f}_{(\phi_1+\phi_2)^*} \leq \min \set{ \norm{f}_{\phi_1^*}, \norm{f}_{\phi_2^*}}$. This implies
  \begin{align*}
    [\phi_1+\phi_2]_{\mathcal{A}}& = \sup_Q \frac{\norm{\indicator_Q}_{\phi_1+\phi_2}\, \norm{\indicator_Q}_{(\phi_1+\phi_2)^*}}{|Q|}
    \\
    &\le
    \sup_Q \frac{(\norm{\indicator_Q}_{\phi_1} + \norm{\indicator_Q}_{\phi_2})\,\min(\norm{\indicator_Q}_{\phi_1^*}, \norm{\indicator_Q}_{\phi_2^*})
    }{|Q|}
    \\
    &\le
    \sup_Q \frac{\norm{\indicator_Q}_{\phi_1}\norm{\indicator_Q}_{\phi_1^*}
    }{|Q|} + \sup_Q 
    \frac{\norm{\indicator_Q}_{\phi_2}\norm{\indicator_Q}_{\phi_2^*}}{|Q|}
    \\
    &\le [\phi_1]_{\mathcal{A}} + [\phi_2]_{\mathcal{A}}.
  \end{align*}
  Moreover,
  \begin{align*}
    [(\phi_1^*+\phi_2^*)^*]_{\mathcal{A}} =
    [\phi_1^*+\phi_2^*]_{\mathcal{A}} \leq 
    [\phi_1^*]_{\mathcal{A}} +
    [\phi_2^*]_{\mathcal{A}} =
    [\phi_1]_{\mathcal{A}} +
    [\phi_2]_{\mathcal{A}}.
  \end{align*}
\end{remark}  

Let us introduce a bit of useful notation. For a cube~$Q$ (or ball) and $f \in L^1_{\loc}(\RRn)$ we define
\begin{align*}
  M_Q f &:= \dashint_Q \abs{f(x)}\,dx,
  \\
  (M_Q\phi)(t) &\coloneq\dashint_Q\phi(x,t)\,dx
\end{align*}
Note that $M_Q \phi$ is an N-function; it is the averaged version of~$\phi$. It follows from~\eqref{eq:lower-upper1} that $M_Q \phi$ has the same lower and upper Simonenko indices~$p$ and~$q$, respectively.
	Analogously, we define the N-function $M_Q\phi^*$ as the averaged version of~$\phi^*$ which has Simonenko indices $q'$ and $p'$.
	As usual we denote by $(M_Q\phi^*)^*$ the conjugate N-function of $M_Q\phi^*$; it has Simonenko indices~$p$ and~$q$. 
	It has been shown in \cite[Lemma~5.2.8]{DHHR} that
	\begin{align*}
		(M_Q \phi^*)^*(M_Q f) \leq M_Q (\phi(f)) = \dashint_Q \phi(x,\abs{f(x)})\,dx
	\end{align*}
	and
	\begin{align}
		\label{eq:MQJensen}
		(M_Q \phi^*)^*(t) = \inf_{f \in L^0(Q) : M_Q f =t}  M_Q (\phi(f)) \leq (M_Q\phi)(t).
	\end{align}
	Note that $(M_Q \phi^*)^*$ is also referred to as the infimal convolution, see \cite{Kopaliani}.
Recall that a generalized Young function $\phi$ belongs to the class $\mathcal{A}_{\text{glob}}$, denoted $\phi \in \mathcal{A}_{\text{glob}}$, if the averaging operators $T_{\mathcal{Q}}$ are bounded on $L^{\phi}(\mathbb{R}^n)$ uniformly over all disjoint families $\mathcal{Q}$ of cubes in $\mathbb{R}^n$. The operator $T_{\mathcal{Q}}$ is defined for a function $f$ as:
\begin{align*}
  T_{\mathcal{Q}}f = \sum_{Q\in \mathcal{Q}} \indicator_Q M_Q f = \sum_{Q\in \mathcal{Q}} \indicator_Q \dashint_Q|f(y)| \, dy.
\end{align*}

\subsection{Generalized Jensen's inequality for the double phase}

The main result of this section is the following theorem.
\begin{theorem}[Generalized Jensen's inequality]
  \label{thm:Jensen}
  Let $\phi(x,t)=\tfrac 1p t^p+ \tfrac 1q a(x)t^q$ with $1<p<q < \infty$ and $a\,:\, \RRn \to [0,\infty)$ be measurable. Then the following assertions are equivalent:
  \begin{enumerate}
  \item \label{itm:Muck_a} $\phi \in \mathcal{A}$.
  \item \label{itm:Jensen_b} There exists $C>0$ such that for every $f\in L^{\phi}(\RRn)$ with $\norm{f}_\phi \leq 1$ and for every cube $Q$ in $\RRn$,  we have
    \begin{align}
      \label{eq:generalized-jensen-phi}
      \int_Q\phi\left(x,\frac{1}{|Q|}\int_Q|f(y)|\,dy\right)\,dx\leq C\int_Q\phi(x,|f(x)|)\,dx.
    \end{align}
  \item \label{itm:Jensen_c} There exists $C>0$ such that for every $f\in L^{\phi^*}(\RRn)$ with $\norm{f}_{\phi^*} \leq 1$ and for every cube $Q$ in $\RRn$,  we have
    \begin{align}
      \label{eq:generalized-jensen-phi*}
      \int_Q\phi^*\left(x,\frac{1}{|Q|}\int_Q|f(y)|\, dy\right)\, dx\leq C\int_Q\phi^*(x,|f(x)|)\, dx.
    \end{align}
      \end{enumerate}
 In particular, if one of (a)--(c) is valid, then so are the others, with constants 
$[\phi]_{\mathcal{A}}$  or $C$  depending only on
$p$, $q$, and the constant in the assumed statement.
\end{theorem}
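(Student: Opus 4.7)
The plan is to first establish (a) $\iff$ (b), and then deduce (a) $\iff$ (c) by applying the same equivalence to $\phi^*$ in place of $\phi$. This is legitimate because the paper has already noted $\phi\in\mathcal{A} \iff \phi^*\in\mathcal{A}$ with equal Muckenhoupt constant.

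For (b) $\implies$ (a), the natural approach is a test-function argument. Fix a cube $Q$ and, for arbitrary $h\in L^\phi(\RRn)$ with $\norm{h}_\phi\le 1$, apply~(b) to $f := h\indicator_Q$. Since $M_Q f$ is a constant, the hypothesis simplifies to $\abs{Q}(M_Q\phi)(M_Q f) \le C$. Using the lower Simonenko index of $M_Q\phi$ from~\eqref{eq:lower-upper1} to absorb the constant yields $M_Q f \le C^{1/p}(M_Q\phi)^{-1}(1/\abs{Q}) = C^{1/p}/\norm{\indicator_Q}_\phi$. Taking the supremum over $h$ and invoking the norm-conjugate formula (Lemma~\ref{lem:norm-conjugate-formula}) then gives $\norm{\indicator_Q}_\phi\norm{\indicator_Q}_{\phi^*} \lesssim C^{1/p}\abs{Q}$, i.e.\ $\phi\in\mathcal{A}$.

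For the harder direction (a) $\implies$ (b), fix $f$ with $\norm{f}_\phi\le 1$ and a cube $Q$. The intrinsic Jensen inequality~\eqref{eq:MQJensen} gives $\abs{Q}(M_Q\phi^*)^*(M_Q f) \le \int_Q \phi(x,\abs{f})\,dx$, so the task reduces to establishing the pointwise comparison $(M_Q\phi)(t) \lesssim (M_Q\phi^*)^*(t)$ at $t = M_Q f$. Set $A := (M_Q\phi)^{-1}(1/\abs{Q})$ and $B := [(M_Q\phi^*)^*]^{-1}(1/\abs{Q})$; the identity $\norm{\indicator_Q}_\phi = 1/A$ and the standard relation $\psi^{-1}(s)(\psi^*)^{-1}(s)\eqsim s$ applied with $\psi = M_Q\phi^*$ give $\norm{\indicator_Q}_{\phi^*} \eqsim \abs{Q} B$. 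Consequently $\phi\in\mathcal{A}$ translates into $B \lesssim [\phi]_{\mathcal{A}} A$, while H\"older's inequality~\eqref{eq:hoelder} together with $\norm{f}_\phi\le 1$ confines the relevant range to $t = M_Q f \lesssim B$. At the endpoint $t=B$ itself, the Simonenko scaling of $M_Q\phi$ combined with $B\lesssim[\phi]_{\mathcal{A}} A$ yields $(M_Q\phi)(B) \lesssim [\phi]_{\mathcal{A}}^{\,q}(M_Q\phi^*)^*(B)$, which in turn forces $a_Q B^q \lesssim [\phi]_{\mathcal{A}}^{\,q}/\abs{Q}$.

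The main obstacle, and the technical heart of the proof, is to propagate this endpoint comparison to the uniform bound $(M_Q\phi)(t) \lesssim (M_Q\phi^*)^*(t)$ on the entire range $t\in[0,cB]$. The shared Simonenko indices $[p,q]$ of the two functions do not suffice by themselves, since their ratio can grow as they transition between their $t^p$- and $t^q$-regimes at different scales; indeed, an unrestricted such comparison would force the $A_q$ condition on $a$ itself. The remedy is to exploit the explicit double-phase structure: Lemma~\ref{lem:phi*} yields $\phi^*(x,s)\eqsim\min(s^{p'},a(x)^{1-q'}s^{q'})$, allowing a piecewise-in-$s$ analysis of $M_Q\phi^*(s)$ and thereby of the infimal convolution $(M_Q\phi^*)^*(t)$. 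A careful splitting based on the transition thresholds in this minimum, combined with the endpoint estimate $a_Q B^q \lesssim [\phi]_{\mathcal{A}}^{\,q}/\abs{Q}$ and the restriction $t\le cB$, will deliver the required uniform comparison and complete the proof. This step is specific to the two-piece form of $\phi$ and does not extend directly to arbitrary Musielak--Orlicz functions.
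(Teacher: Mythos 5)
Your proof of (b)$\Rightarrow$(a) is fine and is essentially the test-function argument the paper delegates to [DHHR, Theorem~4.5.7]. Your reduction of (a)$\Rightarrow$(b) to the comparison $(M_Q\phi)(t)\lesssim(M_Q\phi^*)^*(t)$ on the restricted range $t\lesssim B$ is a genuinely different route from the paper (which picks a test level $\tilde t$ solving $\int_Q(\phi^*)'(x,\tilde t)\,dx=\int_Q|f|\,dx$ and then invokes the auxiliary Lemma~\ref{lem:intermedinequality}), and it works; in fact the ``careful splitting'' you postpone can be closed without any piecewise analysis of the $\min$: since $M_Q\phi^*(s)\lesssim s^{p'}$ by Lemma~\ref{lem:phi*}, one gets $(M_Q\phi^*)^*(t)\gtrsim t^p$ outright, which handles the $\tfrac1p t^p$ term; and for the $\tfrac1q a_Q t^q$ term, the endpoint bound $a_QB^q\lesssim[\phi]_{\mathcal A}^q/|Q|$ together with the upper Simonenko index $q$ of $(M_Q\phi^*)^*$ gives $(M_Q\phi^*)^*(t)\ge(t/B)^q/|Q|$ for $t\le B$, so $a_Qt^q = a_QB^q(t/B)^q\lesssim[\phi]_{\mathcal A}^q(M_Q\phi^*)^*(t)$ on the restricted range.

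The genuine gap is the passage from (b) to (c). Your opening plan is to ``deduce (a)$\Leftrightarrow$(c) by applying the same equivalence to $\phi^*$ in place of $\phi$,'' but your own (a)$\Rightarrow$(b) argument hinges on the explicit sum decomposition $(M_Q\phi)(t)=\tfrac1pt^p+\tfrac1qa_Qt^q$, and you yourself observe at the end that ``this step is specific to the two-piece form of $\phi$.'' The conjugate $\phi^*(x,t)\eqsim\min(t^{p'},a(x)^{1-q'}t^{q'})$ is a pointwise minimum, not a sum, so $M_Q\phi^*$ does not split into two monomial pieces and neither leg of the argument above transfers: you cannot isolate a ``$t^q$-type'' piece of $(M_Q\phi^*)(t)$ to apply the endpoint estimate to. Thus the claim that (a)$\Leftrightarrow$(c) follows ``by applying the same equivalence to $\phi^*$'' is not justified as stated. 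The paper avoids this entirely by proving (b)$\Rightarrow$(c) directly through a duality argument that uses no structural information about $\phi$ beyond general $N$-function identities: one writes $\int_Q\phi^*(x,M_Qg)\,dx\le\int_Q\bigl(\dashint_Q(\phi^*)'(y,M_Qg)\,dy\bigr)|g(x)|\,dx$, applies a weighted Young inequality, and then applies the just-established Jensen inequality (b) to the test function $(\phi^*)'(\cdot,M_Qg)$ to absorb the resulting term. You should either adopt that direct argument or supply a separate analysis of the $\min$-form $(M_Q\phi^*)(t)$ versus $(M_Q\phi)^*(t)$ on the appropriate restricted range.
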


\begin{remark}
  Note that pointwise estimates, such as those in \cite[Theorem 4.3.2]{HastoHarjulehto2019} (see also \cite[Theorem 4.2.4]{DHHR}), do not hold without additional assumptions on the $\phi$-function. The validity of such estimates relies on conditions like (A0), (A1), and (A2); see \cite{HastoHarjulehto2019}. For instance, in the double phase model (a special case of generalized Orlicz spaces), when the modulating coefficient $a$ is bounded and $a \in C^{0,\alpha}(\mathbb{R}^n)$ with $\alpha\in(0, 1]$, we obtain the pointwise key estimate \cite[Theorem 4.3.2]{HastoHarjulehto2019} provided that the exponents satisfy $\frac{q}{p} \leq 1 + \frac{\alpha}{n}$.
\end{remark}
For the proof of Theorem~\ref{thm:Jensen} we need the following auxiliary result.

\begin{lemma}\label{lem:intermedinequality}
  Let $\phi \in \mathcal{A}$ where $\phi(x,t)=\tfrac 1p t^p+ \tfrac 1q a(x)t^q$. Then for all $t$ with $0 \leq t \leq \frac{1}{\norm{\indicator_Q}_{\phi^*}}$ we have
  \begin{align}
    \dashint_Q\phi'\bigg(x,\dashint_Q(\phi^*)'(y,t)\,dy\bigg)dx \lesssim t.
  \end{align}
  The implicit constant only depends on $p$, $q$ and $[\phi]_{\mathcal{A}}$.
\end{lemma}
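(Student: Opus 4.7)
The plan is to replace $\phi'$ and $(\phi^*)'$ by their explicit asymptotics from Lemma~\ref{lem:phi*}, which reduces the inequality to an estimate on the averages $u := \dashint_Q (\phi^*)'(y,t)\,dy$ and $\bar a_Q := \dashint_Q a(x)\,dx$. Since $\phi'(x,u) \eqsim u^{p-1} + a(x)\,u^{q-1}$, averaging yields
\begin{align*}
  \dashint_Q \phi'(x,u)\,dx \eqsim u^{p-1} + \bar a_Q\,u^{q-1},
\end{align*}
so it suffices to bound each summand by a multiple of $t$.

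The $p$-part is handled by a pointwise estimate: Lemma~\ref{lem:phi*} gives $(\phi^*)'(y,t) \lesssim t^{p'-1}$, whence $u \lesssim t^{p'-1}$ and $u^{p-1} \lesssim t^{(p'-1)(p-1)} = t$ using $(p'-1)(p-1) = 1$. For the $q$-part, introduce $\tau := 1/\norm{\indicator_Q}_{\phi^*}$, so by hypothesis $t \le \tau$ and, by the unit ball property~\eqref{eq:unit-ball}, $\int_Q \phi^*(y,\tau)\,dy = 1$. The Muckenhoupt condition~\eqref{eq:classA} yields $\norm{\indicator_Q}_\phi \le [\phi]_{\mathcal{A}}\,|Q|\,\tau$, and combining this with~\eqref{eq:unit-ball} and the monotonicity of $\phi$ in its second variable gives $\int_Q \phi\bigl(x, \tfrac{1}{[\phi]_{\mathcal{A}}|Q|\tau}\bigr)\,dx \le 1$; keeping only the $q$-term produces $\bar a_Q \lesssim |Q|^{q-1}\tau^{q}$.

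To close the argument we need a sharper bound on $u$ than $u \lesssim t^{p'-1}$. The Simonenko indices $q',p'$ of $\phi^*$ imply $\phi^*(y,st) \le s^{q'}\phi^*(y,t)$ for $s \in [0,1]$; with $s = t/\tau \le 1$ this gives $\int_Q \phi^*(y,t)\,dy \le (t/\tau)^{q'}$. Combined with $(\phi^*)'(y,s) \eqsim \phi^*(y,s)/s$ (an instance of~\eqref{eq:index-sim} applied to $\phi^*$) we obtain
\begin{align*}
  u \eqsim \frac{1}{t|Q|}\int_Q \phi^*(y,t)\,dy \lesssim \frac{t^{q'-1}}{\tau^{q'}|Q|},
\end{align*}
hence $u^{q-1} \lesssim t/(\tau^{q}|Q|^{q-1})$ after invoking $(q'-1)(q-1) = 1$ and $q'(q-1) = q$. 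Multiplying by the bound on $\bar a_Q$, the powers of $\tau$ and $|Q|$ cancel exactly, and we conclude $\bar a_Q\,u^{q-1} \lesssim t$. The hard part is spotting this sharper bound on $u$: the crude estimate $u \le t^{p'-1}$ paired with the Muckenhoupt bound on $\bar a_Q$ leaves a factor $(\tau/t)^{q} \ge 1$ that cannot be absorbed, so the convexity inequality $\phi^*(y,t) \le (t/\tau)^{q'}\phi^*(y,\tau)$ is essential to insert exactly the compensating power of $\tau$.
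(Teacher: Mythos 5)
Your proof is correct and mirrors the paper's argument: both split $\phi'(x,u)\eqsim u^{p-1}+a(x)u^{q-1}$, dispose of the $p$-part via $u\lesssim t^{p'-1}$, and bound the $q$-part by passing from $t$ to $\tau=1/\norm{\indicator_Q}_{\phi^*}$ and invoking the unit-ball property for $\phi^*$ and for $\phi$ together with the Muckenhoupt identity $|Q|\eqsim\norm{\indicator_Q}_\phi\norm{\indicator_Q}_{\phi^*}$, arriving at the same power counting $u^{q-1}\lesssim t\,\tau^{-q}|Q|^{1-q}$ and $\bar a_Q\lesssim|Q|^{q-1}\tau^q$. The only difference is cosmetic: where the paper inserts the explicit $\min$-formula for $(\phi^*)'$ into the inner average and rescales by $\tau^{q'}$ to recognize $\phi^*(y,\tau)$, you reach the same bound on $u$ via the Simonenko inequality $\phi^*(y,st)\le s^{q'}\phi^*(y,t)$ for $s\le 1$ together with $(\phi^*)'(y,t)\eqsim\phi^*(y,t)/t$.
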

\begin{proof}
  We have
  \begin{align*}
    \dashint_Q(\phi^*)'(y,t)\,dy\eqsim\dashint_Q\min\bigset{t^{p'-1},\,\, a(y)^{1-q'}t^{q'-1}}\,dy.
  \end{align*}
  Hence, with $(p'-1)(p-1)=1$ we obtain
  \begin{align*}
    \lefteqn{\dashint_Q\phi'\left(x, \dashint_Q(\phi')^{-1}(y,t)dy\right)dx} \qquad &
    \\
    &\eqsim\dashint_Q\left(\dashint_Q\min\bigset{t^{p'-1},\,\, a(y)^{1-q'}t^{q'-1}}\,dy
    \right)^{p-1}\,dx
    \\
    &\quad +\dashint_Qa(x)\left(\dashint_Q\min\bigset{t^{p'-1},\,\, a(y)^{1-q'}t^{q'-1}}\,dy \right)^{q-1}\,dx
    \\
    &\lesssim t  +\dashint_Q a(x)\left(\dashint_Q\min\bigset{t^{p'-1},\,\, a(y)^{1-q'}t^{q'-1}}dy\right)^{q-1}\,dx
    \\
    &\eqsim t\, \Bigg(1 +\dashint_Q a(x)\left(\dashint_Q\min\{t^{p'-q'},\,\, a(y)^{1-q'}\}dy\right)^{q-1}\,dx\Bigg).
  \end{align*}
  It remains to prove that
  \begin{align*}
    \dashint_Q a(x)\left(\dashint_Q\min\{t^{p'-q'},\,\, a(y)^{1-q'}\}\,dy\right)^{q-1}\,dx &\lesssim 1.
  \end{align*}
  We calculate
  \begin{align*}
    \lefteqn{\dashint_Q\min\bigset{t^{p'-q'},\,\, a(y)^{1-q'}}\,dy} \qquad &
    \\
    &\leq \abs{Q}^{-1}\int_Q\min\bigset{(1/\norm{\indicator_Q}_{\phi^*})^{p'-q'},\,\, a(y)^{1-q'}}dy
    \\
    &=\abs{Q}^{-1}\norm{\indicator_Q}_{\phi^*}^{q'}\int_Q\min\{(1/\|\indicator_Q\|_{L^{\phi^{\ast}}})^{p'},\,\, a(y)^{1-q'}(1/\|\indicator_Q\|_{L^{\phi^{\ast}}})^{q'}\}dy
    \\
    &\eqsim \abs{Q}^{-1}\norm{\indicator_Q}_{L^{\phi^{\ast}}}^{q'}\int_Q\phi^*\bigg(y, \frac{1}{\norm{\indicator_Q}_{\phi^*}}\bigg)\,dy
    \\
    &=\abs{Q}^{-1} \norm{\indicator_Q}_{\phi^*}^{q'},
  \end{align*}
  where we have used the unit ball property in the last step.
  Thus,  
  \begin{align*}
    \dashint_Q a(x)\left(\dashint_Q\min\bigset{t^{p'-q'},\,\, a(x)^{1-q'}}\,dx\right)^{q-1}\,dy
    &\lesssim \dashint_Qa(x) \abs{Q}^{1-q} \norm{\indicator_Q}_{\phi^*}^q \,dx
    .
  \end{align*}
  Since $\phi \in \mathcal{A}$, we have $ \abs{Q} \eqsim \norm{\indicator_Q}_{\phi^*} \norm{\indicator_Q}_\phi$. Using this fact, the previous estimate, $(q-1)(q'-1)=1$ and the unit ball property, it implies that 
  \begin{align*}
    \lefteqn{\dashint_Q a(x)\left(\dashint_Q\min\bigset{t^{p'-q'},\,\, a(x)^{1-q'}}\,dx\right)^{q-1}\,dy} \qquad &
    \\
    &\lesssim \int_Qa(x) \norm{\indicator_Q}_\phi^{-q} \,dx
    \lesssim \int_Q \phi\bigg(x, \frac{1}{\norm{\indicator_Q}_\phi}\bigg)\,dx = 1.
  \end{align*}
  This proves the claim.
\end{proof}

\begin{proof}[Proof of Theorem \ref{thm:Jensen}]
  The implication $\ref{itm:Jensen_b}\Rightarrow\ref{itm:Muck_a}$ follows directly from \cite[Theorem~4.5.7]{DHHR}. Although the proof in \cite{DHHR} is for variable Lebesgue spaces, it carries over to the generalized Orlicz setting since its key ingredients, the norm conjugate formula Lemma~\refeq{lem:norm-conjugate-formula} and H\"{o}lder's inequality \eqref{eq:hoelder}, remain valid for the general setting. Now we prove $\ref{itm:Muck_a}\Rightarrow\ref{itm:Jensen_b}$.  Let us define $F(t) \coloneqq\int_{Q}(\phi^{\ast})'(x,t)\,dx$. Using Lemma~\ref{lem:phi*}, we can easily check that $F(0)=0$, $\lim_{t\to 0}F(t)=0$, and $\lim_{t\to\infty}F(t)=\infty$. We can also check easily that $F(t)$ is nondecreasing and continuous.
  Let $\norm{f}_\phi \leq 1$. Then, by H\"older's inequality \eqref{eq:hoelder},
  \begin{align*}
    \int_Q|f(x)|\,dx \leq 2\norm{f\indicator_Q}_\phi\norm{\indicator_Q}_{\phi^*} \leq 2\norm{\indicator_Q}_{\phi^*}.
  \end{align*}
Since $F(t) \eqsim \frac{1}{t}\int_Q\phi^{\ast}(x,t)\,dx$ and, consequently,
  \begin{align*}
 \int_Q(\phi^{\ast})'(x,1/\norm{\indicator_Q}_{\phi^*})\,dx  =    F(1/\norm{\indicator_Q}_{\phi^*}) \eqsim \norm{\indicator_Q}_{\phi^*},
  \end{align*}
there exists $\widetilde{t}>0$   with $0 < \widetilde{t} \leq c/\norm{\indicator_Q}_{\phi^*}$ such that
  \begin{align*}
    \int_Q(\phi^{\ast})'(x,\widetilde{t})\,dx = \int_Q|f(x)|\,dx,
  \end{align*}
 where the constant $c>0$ is independent of the cube $Q$ and the function $f$.
  Since $\phi(x,(\phi^{\ast})'(x,t)) \eqsim \phi^{\ast}(x,t)$, we have
  \begin{align*}
    \int_Q\phi(x,(\phi^{\ast})'(x,\widetilde{t}))\,dx \eqsim \int_Q\phi^{\ast}(x,\widetilde{t})\,dx.
  \end{align*}
  We will prove that
  \begin{align*}
    \int_Q\phi^{\ast}(x,\widetilde{t})\,dx \leq c\int_Q\phi(x,|f(x)|)\,dx.
  \end{align*}
  Note that $\widetilde{t}$ is fixed in the range $0 < \widetilde{t} \leq c/\norm{\indicator_Q}_{\phi^*}$. Since $\phi^{\ast}(x,\widetilde{t}) \eqsim \widetilde{t}(\phi^{\ast})'(x,\widetilde{t})$, we have
  \begin{align*}
    \int_Q\phi^{\ast}(x,\widetilde{t})\,dx \eqsim \widetilde{t}\int_Q(\phi^{\ast})'(x,\widetilde{t})\,dx = \widetilde{t}\int_Q|f(x)|\,dx.
  \end{align*}
Using the facts that $(\phi^{\ast})'=(\phi')^{-1}$  and $\phi' (x,t)t\eqsim \phi(x,t)$, we have
  \begin{align*}
    \widetilde{t}\int_Q(\phi^{\ast})'(x,\widetilde{t})\,dx
    &= 2\widetilde{t}\int_Q|f(x)|\,dx - \widetilde{t}\int_Q(\phi^{\ast})'(x,\widetilde{t})\,dx \\
    &\leq 2\widetilde{t}\int_{\{y\in Q \,:\, 2\widetilde{t}|f(x)| > \widetilde{t}(\phi^{\ast})'(x,\widetilde{t})\}} |f(x)|\,dx \\
    &= 2\widetilde{t}\int_{\{x\in Q \,:\, ((\phi^{\ast})')^{-1}(x,2|f(x)|) \geq \widetilde{t}\}} |f(x)|\,dx \\
    &\lesssim \int_Q ((\phi^{\ast})')^{-1}(x,2|f(x)|)|f(x)|\,dx \\
    &\eqsim \int_Q\phi'(x,2|f(x)|	)|f(x)|\,dx \\
    &\eqsim \int_Q\phi(x,|f(x)|)\,dx.
  \end{align*}
  Using Lemma \ref{lem:intermedinequality} 
  we get
  \begin{align*}
    \int_Q\phi
    &\left(x,\frac{1}{|Q|}\int_Q|f(y)|\,dy\right)dx \\
    &=\int_Q\phi\left(x,\frac{1}{|Q|}\int_Q(\phi^{\ast})'(y,\widetilde{t})\,dy\right)dx \\
    &\eqsim \int_Q\phi'\left(x,\frac{1}{|Q|}\int_Q(\phi^{\ast})'(y,\widetilde{t})\,dy\right)dx \cdot \frac{1}{|Q|}\int_Q(\phi^{\ast})'(y,\widetilde{t})\,dy \\
    &= \frac{1}{\widetilde{t}|Q|}\int_Q\phi'\left(x,\frac{1}{|Q|}\int_Q (\phi^{\ast})'(y,\widetilde{t})\,dy\right)dx \cdot \widetilde{t}\int_Q(\phi^{\ast})'(y,\widetilde{t})\,dy \\
    &\lesssim \frac{1}{\widetilde{t}|Q|}\int_Q\phi'\left(x,\frac{1}{|Q|}\int_Q(\phi^{\ast})'(y,\widetilde{t})\,dy\right)dx \cdot \int_Q\phi(y,|f(y)|)\,dy \\
    &\lesssim \int_Q\phi(y,|f(y)|)\,dy.
  \end{align*}
  
  Now we prove that $\ref{itm:Jensen_b}\Rightarrow\ref{itm:Jensen_c}$. Suppose $\norm{g}_{\phi^*}\leq1$. Then 
we have the following:
  \begin{align*}
    \int_Q\phi^*
    \left(x,\dashint_Q\abs{g(z)}dz\right)\,dx
    &\leq\int_{Q}(\phi^*)'\left(x,\dashint_{Q}\abs{g(z)}dz\right)\dashint_{Q}\abs{g(z)}\,dz\,dx
    \\
    &=\int_Q\Bigg(\dashint_Q(\phi^*)'\left(y,\dashint_{Q}\abs{g(z)}dz\right)dy\Bigg)\abs{g(x)}\,dx
    \\
    &\leq \tfrac{1}{2p'}\int_Q\phi\left(x,\dashint_{Q}(\phi^*)'\left(y,\dashint_Q\abs{g(z)}dz\right)dy\right)\,dx
    \\
    &\quad + c\int_Q\phi^*(x,\abs{g(x)})\,dx
    \\
    &\leq \tfrac{1}{2p'}\int_Q\phi\left(x,(\phi^*)'\left(x,\dashint_Q\abs{g(z)}dz\right)\right)\,dx
    \\
    &\quad + c\int_Q\phi^*(x,\abs{g(x)})\,dx
    \\
    &\leq\tfrac{1}{2}\int_{Q}\phi^*\left(x,\dashint_{Q}\abs{g(z)}dz\right)dx+c\int_Q\phi^*(x,\abs{g(x)})\,dx.
  \end{align*}
  This implies that
  \begin{align*}
    \int_Q\phi^*\left(x,\dashint_Q\abs{g(z)}dz\right)\,dx\leq c \int_{Q}\phi^*(x,\abs{g(y)})\,dx.
  \end{align*}
 Note that we have applied the generalized Jensen's inequality in  \ref{itm:Jensen_b}  to the function $(\phi^{\ast})'\left(\cdot,\dashint_Q\abs{g(z)}\,dz\right)$ with respect to $\phi$. For this, we need to justify that $\|(\phi^{\ast})'(\cdot,\dashint_Q\abs{g(z)}\,dz)\|_{\phi}\leq c$ uniformly for all cubes $Q$. Since \ref{itm:Jensen_b} implies $\phi\in\mathcal{A}$ and hence $\phi^\ast\in\mathcal{A}$, as mentioned above, this is equivalent to the uniform boundedness of the averaging operators for single cubes. Thus, since $\norm{g}_{\phi^*}\leq1$, $\int_{Q}\phi^*\left(x,\dashint_Q\abs{g(z)}\,dz\right)\,dx\leq c$ for some positive constant $c$, uniformly for all $Q$. On the other hand, using $\phi(x,(\phi^{\ast})'(x,t))\leq p'\phi^*(x,t)$ and the unit ball property, we obtain the desired inequality. 

  Finally, using the fact that $(\phi^*)^*=\phi$, we also obtain that $\ref{itm:Jensen_c}\Rightarrow \ref{itm:Jensen_b}$. Therefore, the proof is completed.
\end{proof}
\begin{remark}
	We subsequently apply the generalized Jensen's inequality (Theorem \ref{thm:Jensen}) to balls. For completeness, we briefly verify that the inequality remains valid in this setting. Here, $\alpha B$ denotes the ball concentric with $B$ and with radius $\alpha$ times that of $B$. Assume the inequality holds for cubes. Without loss of generality, let $f \in L^\phi(\RRn)$ be supported on $B$ with $\norm{f}_\phi\leq1$, and let $Q$ be a cube such that $B \subset Q \subset \sqrt{n} B$. Then
	\begin{align*}
		\dashint_{B}\phi\left(x,\dashint_{B} |f(y)|\,dy\right)\, dx &\lesssim
		\dashint_{Q}\phi\left(x,\dashint_{Q} |f(y)|\, dy\right)\, dx \\
		&\lesssim \dashint_{Q}\phi(x,\abs{f(x)})\,dx \le \dashint_{B} \phi(x,\abs{f(x)})\,dx.
	\end{align*}
\end{remark}
\subsection{Examples}
\label{sec:examples}

In this section we provide examples of the modulating coefficient~$a$ such that the double phase model $\phi(x,t)=\tfrac 1p t^p+\tfrac 1qa(x) t^q$ satisfies our Muckenhoupt condition $\phi \in \mathcal{A}$.
\begin{example}\label{exa:Aq}
  Let $\phi(x,t)=\tfrac 1p t^p+  \tfrac 1q a(x) t^q$ with $1 <p < q < \infty$ and $a \in A_q$, where $A_q$ is the standard Muckenhoupt class~\eqref{eq:muckenhoupt-classical}. Note that this  also includes unbounded coefficients, e.g. $w(x) = \abs{x}^\alpha$ with $-n < \alpha < (q-1)n$. We claim that $\phi \in \mathcal{A}$. Due to Theorem~\ref{thm:Jensen} it is enough to verify the generalized Jensen's inequality.  Let $f\in L^{\phi}({\RRn})$. Then by H\"older inequality,
  \begin{align*}
    \lefteqn{\dashint_{Q}\phi\left(x,\dashint_{Q}\abs{f(y)}\,dy\right)\,dx} & \qquad
    \\
    &= \frac 1p \left(\dashint_{Q}\abs{f(y)}\,dy\right)^p + \frac 1q \dashint_Q a(x)\, dx \left(\dashint_{Q}\abs{f(y)}\,dy\right)^q
    \\
    &\leq \frac 1p \dashint_{Q}\abs{f(x)}^p\,dx + \frac 1q \dashint_{Q}a(x)\,dx \left(\dashint_{Q}a(x)^{-\frac{q'}{q}}\,dx\right)^{\frac{q}{q'}}
    \left(\dashint_{Q}a(x)\abs{f(x)}^q\,dx\right)
    \\
    &\leq [\omega]_{A_q} \dashint_{Q}\phi(x,\abs{f(x)})\,dx.
  \end{align*}
  Note that, compared to Jensen's inequality in Theorem~\refeq{thm:Jensen}, we do not need the restriction that $\norm{f}_\phi \leq 1$ in this example.
\end{example}
The following example is one considered in \cite[Proposition 3.1.]{CoMi1}, (see also \cite[Corollary 7.2.3]{HastoHarjulehto2019}).
\begin{example}
  \label{exa:hoelder}
  Let  $\phi(x,t)= \frac{1}{p}t^p+ \frac{1}{q}a(x) t^q$ with $1 <p < q < \infty$  and a non-negative function $a\in L^\infty(\RRn)$ such that 
  \begin{equation}\label{eq:large_hoelder}
  	a(x)\lesssim a(y)+\abs{x-y}^\alpha
  \end{equation}
  for all $x,y\in Q$ with $\abs{Q}\leq 1$, provided that $\tfrac{q}{p}\leq 1+\tfrac{\alpha}{n}$ with $\alpha>0$. Note that if $\alpha \le 1$, then $a\in C^{0,\alpha}(\RRn)$ implies \eqref{eq:large_hoelder}. We claim that if $a$ satisfies \eqref{eq:large_hoelder}, then $\phi \in \mathcal{A}$. Due to Theorem~\ref{thm:Jensen} it suffices to prove the generalized Jensen's inequality~\eqref{eq:generalized-jensen-phi}. The pointwise inequality leading to the generalized Jensen inequality is proved in \cite[Proposition 3.1]{CoMi1} for the case of balls with radius $R \le 2$. The estimate for all balls can be found in~\cite[Section~7.2]{HastoHarjulehto2019}. In fact, a stronger pointwise estimate holds, see \cite[Theorem~4.3.2]{HastoHarjulehto2019}, which gives the generalized Jensen's inequality after integration. Note that such a pointwise estimate cannot hold in the general case~$\phi \in \mathcal{A}$.

  For the sake of completeness we provide a short proof. Let $Q \subset \RRn$ be a cube and $\norm{f}_\phi \leq 1$. We begin with the case $\abs{Q} \leq 1$. Then $R:=\frac 12\diameter(Q) \leq \frac 12\sqrt{n}$. Using $\norm{f}_p \leq \norm{f}_\phi \leq 1$ and $\alpha+n-\tfrac{nq}{p}\geq 0$ we estimate
  \begin{align*}
    \lefteqn{\dashint_{Q}\phi\bigg(x,\dashint_{Q}\abs{f}\,dy\bigg)\,dx
      =\frac{1}{p}\bigg(\dashint_{Q}\abs{f}\,dy\bigg)^p + \frac{1}{q} \bigg(\dashint_{Q}\abs{f(y)}\,dy\bigg)^q  \dashint_{Q}  a(x)\,dx} \quad &
    \\
    &\leq\frac{1}{p}\dashint_{Q}\abs{f}^p\,dx +\frac{1}{q} \big(\inf_{y\in Q}a(y) + c R^\alpha\big) \bigg(\dashint_{Q}\abs{f(y)}\,dy\bigg)^q 
    \\
    &\lesssim \frac{1}{p} \dashint_{Q}\abs{f}^p\,dx +\frac{1}{q} \dashint_{Q}\! a(y)\abs{f(y)}^q\,dy
    +\frac{R^\alpha}{q} \bigg(\dashint_{Q}\abs{f}^p\,dy\bigg)^{\frac{q}{p}} 
    \\
    &\lesssim \frac{1}{p} \dashint_{Q}\abs{f}^p\,dx+\frac{1}{q}\dashint_{Q} a(y)\abs{f(y)}^q\,dy
    +\frac{1}{q} R^{\alpha+n-\frac{nq}{p}} \left(\dashint_{Q}\abs{f}^p\,dy\right)
    \\
    &
    \lesssim \dashint_{Q}\tfrac 1p\abs{f}^p+ a(x) \tfrac{1}{q}\abs{f}^q\,dx.
  \end{align*}
  Let us now consider the case $\abs{Q}>1$.
  \begin{align*}
    \lefteqn{\dashint_{Q}\phi\left(x,\dashint_{Q}\abs{f}\,dy\right)\,dx
			=\frac{1}{p} \left(\dashint_{Q}\abs{f}\,dy\right)^p + \frac{1}{q}\left(\dashint_{Q}\abs{f}\,dy\right)^q\dashint_{Q} a(x)\,dx} \qquad &
    \\
    &\leq\frac{1}{p}\dashint_{Q}\abs{f}^p\,dx +\frac{1}{q} \norm{a}_\infty \left(\dashint_{Q}\abs{f}\,dy\right)^q
    \\
    &\leq\frac{1}{p}\dashint_{Q}\abs{f}^p\,dx +\frac{1}{q} \norm{a}_\infty \left(\dashint_{Q}\abs{f}^p\,dy\right)^{\frac qp}
    \\
    &\leq\frac{1}{p}\dashint_{Q}\abs{f}^p\,dx \Bigg( 1+ \frac{p}{q} \norm{a}_\infty \abs{Q}^{1-\frac qp} \bigg( \int_Q \abs{f}^p\,dx\bigg)^{\frac qp -1} \Bigg)
    \\
    &\lesssim \frac{1}{p}\dashint_{Q}\abs{f}^p\,dx.
  \end{align*}
  This completes the proof of the generalized Jensen's inequality. Thus, $\phi \in \mathcal{A}$.
\end{example}

\begin{example}
  \label{exa:min-max}
  Let $\phi_j(x,t) = \frac 1p t^p + \frac1q a_j(x) t^q$ with $\phi_j \in \mathcal{A}$ for $j=1,2$. Let
  \begin{alignat*}{2}
    \phi_{\mathrm{sum}} (x,t) &:= \tfrac 1p t^p +  \tfrac 1q (a_1(x)+ a_2(x))t^q,
    \\
    \phi_{\max}(x,t) &:= \tfrac 1p t^p + \tfrac 1q\max \set{a_1(x), a_2(x)}t^q &&= \max \set{ \phi_1(x,t), \phi_2(x,t)},
    \\
    \phi_{\min}(x,t) &:= \tfrac 1p t^p + \tfrac 1q\min \set{a_1(x), a_2(x)}t^q&&=\min \set{ \phi_1(x,t), \phi_2(x,t)}.
  \end{alignat*}
  We claim that $\phi_{\mathrm{sum}}, \phi_{\max}, \phi_{\min} \in \mathcal{A}$. For this we observe that
  \begin{alignat*}{4}
    \norm{f}_{\phi_{\max}} &\leq
    \norm{f}_{\phi_{\mathrm{sum}}} &&\leq
    \norm{f}_{\phi_1+\phi_2}  &&\leq \norm{f}_{\phi_1} + \norm{f}_{\phi_2},
    \\
    \norm{f}_{(\phi_1+\phi_2)^*} &\leq
    \norm{f}_{\phi_{\mathrm{sum}}^*} &&\leq
    \norm{f}_{\phi_{\max}^*} &&\leq \min\set{ \norm{f}_{\phi_1^*}, \norm{f}_{\phi_1^*}}.
  \end{alignat*}
  This and the proof of Remark~\ref{rem:A-additive} imply
  \begin{align*}
    \max \bigset{[\phi_{\mathrm{max}}]_{\mathcal{A}},       [\phi_{\mathrm{sum}}]_{\mathcal{A}}}
    &\le [\phi_1]_{\mathcal{A}} + [\phi_2]_{\mathcal{A}}.
  \end{align*}
  Since $\phi_{\min}$ is convex it follows by~\cite[Theorem~1.3]{TikhIoff} that $\phi_{\min}^* = \max_{j=1,2} \phi_j^*$.
  This, the estimates
  \begin{alignat*}{3}
    \norm{f}_{\phi_{\min}} &\leq
    \min_{j=1,2} \norm{f}_{\phi_j},
    \\
    \norm{f}_{\phi_{\min}^*} &=
    \norm{f}_{\max_j \phi_j^*} &&\leq
    \norm{f}_{\phi_1^* + \phi_2^*} &&\leq
    \norm{f}_{\phi_1^*} + \norm{f}_{\phi_2^*},
  \end{alignat*}
  and the proof in Remark~\ref{rem:A-additive} again implies that
  \begin{align*}
    [\phi_{\min}]_{\mathcal{A}} &\le [\phi_1]_{\mathcal{A}} + [\phi_2]_{\mathcal{A}}.
  \end{align*}
  Now, using $a_1\in A_q$ as in Example~\ref{exa:Aq} and $a_2(x) = (\max\set{|x|,0})^\alpha$ with $\frac qp \leq 1 + \frac \alpha n$  as in Example~\ref{exa:hoelder} we get new examples that cannot be covered by  Example~\ref{exa:Aq} or  Example~\ref{exa:hoelder} alone. In particular, $\phi_{\min}$ is of interest here, since it can be zero on a large set.
\end{example}

\begin{example}
  \label{exa:power}
  Let $\phi(x,t) = \frac 1p t^p +  \frac 1q a(x) t^q$, with $1 < p < q < \infty$ and $a(x) = \min \set{\abs{x}^\alpha,1}$ with $\alpha> -n$. We claim that $\phi\in\mathcal{A}$. If $\alpha \in (-n,n(q-1))$, then $a \in A_q$ by \cite[Exercise 7.1.8]{Grafakos2014classical}. Hence, we can apply 
  Example~\ref{exa:Aq} and Example~\ref{exa:min-max} to obtain $\phi \in \mathcal{A}$. If on the other hand $\alpha \geq n(\frac qp -1)$, exactly the same steps as in Example~\ref{exa:hoelder} imply that $\phi \in \mathcal{A}$, provided we verify the condition $a(x) \lesssim a(y) + \abs{x-y}^\alpha$. Obviously, $\min\{\abs{x}, 1\} \leq \min\{\abs{y}, 1\} + \abs{x-y}$. Raising this to the power $\alpha$ and using the inequality $(u+v)^\alpha \leq 2^{\alpha-1}(u^\alpha + v^\alpha)$ yields
  \[
  a(x) = \big(\min\{\abs{x}, 1\}\big)^\alpha 
  \leq \big(\min\{\abs{y}, 1\} + \abs{x-y}\big)^\alpha 
  \leq 2^{\alpha-1} \big(a(y) + \abs{x-y}^\alpha\big).
  \]
  Using the last inequality with the fact that $a \in L^\infty(\RRn)$, confirms the claim. This example is important because it shows that we can cover cases outside of the range $\frac{q}{p}\geq 1+\frac{\alpha}{n}.$
\end{example}

\subsection{Improved generalized Jensen's inequality}

We now prove an improved version of the generalized Jensen's inequality, which is essential for obtaining the Sobolev-\Poincare inequality. In this step no additional property of the double phase model is used. Just the validity of the generalized Jensen's inequality is enough.  
\begin{definition}[$A_\infty(Q)$ Condition]
	Let $Q$ be a cube. A weight $w$ is said to satisfy the $A_\infty(Q)$ condition if for any $\alpha \in (0, 1)$, there exists $\beta \in (0, 1)$ such that for any cube $Q' \subset Q$ and for any measurable subset $E \subset Q'$, the condition $\lvert E \rvert \geq \alpha \lvert Q' \rvert$ implies $w(E) \geq \beta w(Q')$, where $w(E)\coloneq\int_Ew(x)\,dx$.
\end{definition}

\begin{remark}\label{rem:equivalent_characterizations}
	There are several equivalent characterizations of the class $A_{\infty}(Q)$. In particular, the following statements are equivalent:
	\begin{enumerate}
		\item[(a)] $w \in A_{\infty}(Q)$.
		\item[(b)] For any $\alpha' \in (0, 1)$, there exists $\beta' \in (0, 1)$ such that for any cube $Q' \subset Q$ and any measurable subset $E \subset Q'$, if $\lvert E \rvert \leq \alpha' \lvert Q' \rvert$, then $w(E) \leq \beta' w(Q')$.
		\item[(c)] There exist positive constants $C$ and $\varepsilon$ such that for any cube $Q'\subset Q$,
		\begin{align}\label{eq:RH}
			\left( \frac{1}{|Q'|} \int_{Q'} w(x)^{1+\varepsilon} \, dx \right)^{\frac{1}{1+\varepsilon}} \leq \frac{C}{|Q'|} \int_{Q'} w(x) \, dx.
		\end{align}
	\end{enumerate}
\end{remark}
See \cites[Theorem 7.3.3]{Grafakos2014classical}[see also][p.402, Lemma 2.5]{RubGar}, for an extensive list of equivalent conditions about $A_{\infty}$ class. Note that \cite[Theorem 7.3.3]{Grafakos2014classical} deals with the global case, so it considers the case $A_{\infty}(\RRn)$, but it is easy to see that the local case can be done exactly in the same same way. The local formulation is necessary because the derivation of the $A_\infty$ condition relies on the generalized Jensen's inequality, which is applicable only when the function satisfies a norm bound (specifically, norm $\leq1$ or bounded by a positive constant). This imposes the coupling constraint $0 < t \leq \frac{1}{\|\indicator_{Q}\|_\phi}$. To claim that $\phi(\cdot, t) \in A_\infty(\mathbb{R}^n)$ for a \textit{fixed} $t$, this parameter would need to satisfy the constraint for every cube $Q\subset\RRn$. Since the upper bound depends on $Q$, no single fixed $t$ can satisfy this uniformly. Therefore, the property must be stated locally: $\phi(\cdot, t) \in A_\infty(Q)$ precisely when $t$ is adapted to the specific cube $Q$ in a way that $0 < t \leq \frac{1}{\|\indicator_{Q}\|_\phi}$. Note also that the constants in \eqref{eq:RH} depend only on $\alpha$ and $\beta$ from the definition of $A_\infty$, and on the dimension.

\begin{theorem}\label{thm:A_loc_with_t}
  Let $\phi \in \mathcal{A}$, where $\phi(x,t)=\tfrac 1p t^p+ \tfrac 1q a(x)t^q$, and let $Q \subset \RRn$ be a cube. If $0<t\leq\frac{1}{\norm{\indicator_{Q}}_\phi}$ then there exist positive constants $C$ and $\varepsilon$ depending only on $[\phi]_{\mathcal{A}},\,p,\,q$ and $n$ such that
  \begin{align}\label{eq:phi_RH}
    \left(\dashint_{Q} \phi(x,t)^{1+\varepsilon} dx\right)^{\frac{1}{1+\varepsilon}} \leq C \dashint_{Q} \phi(x,t) dx.
  \end{align} 
\end{theorem}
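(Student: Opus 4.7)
The plan is to exhibit $x \mapsto \phi(x,t)$, for each fixed $t$ in the allowed range, as a weight in the local class $A_{\infty}(Q)$ with constants depending only on $[\phi]_{\mathcal{A}}$, $p$, $q$. The reverse H\"older estimate~\eqref{eq:phi_RH} then follows immediately from the characterization~(c) in Remark~\ref{rem:equivalent_characterizations} applied to the outer cube $Q$ itself.

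To verify the $A_{\infty}(Q)$ condition, fix $t$ with $0 < t \leq 1/\norm{\indicator_Q}_\phi$, set $w(x) := \phi(x,t)$, and take an arbitrary subcube $Q' \subset Q$ together with a measurable $E \subset Q'$ satisfying $\abs{E} \geq \alpha \abs{Q'}$. Test the generalized Jensen inequality with $f := t\,\indicator_E$. Since $E \subset Q$, we get $\norm{f}_\phi = t\,\norm{\indicator_E}_\phi \leq t\,\norm{\indicator_Q}_\phi \leq 1$, so Theorem~\ref{thm:Jensen}~\ref{itm:Jensen_b} applies on $Q'$ and yields
\begin{align*}
  \int_{Q'} \phi\!\left(x,\, t s\right) dx
  = \int_{Q'} \phi\!\left(x, \dashint_{Q'} f(y)\,dy\right) dx
  \leq C \int_{Q'} \phi(x, f(x))\,dx
  = C\,w(E),
\end{align*}
with $s := \abs{E}/\abs{Q'} \in [\alpha,1]$ and $C = C([\phi]_{\mathcal{A}},p,q)$.

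For the matching lower bound, the condition $s \leq 1$ together with $p<q$ and~\eqref{eq:lower-upper1} gives pointwise $\phi(x,ts) \geq s^q\,\phi(x,t)$, so the left-hand side above is at least $s^q\,w(Q')$. Combining,
\begin{align*}
  w(E) \;\geq\; \frac{s^q}{C}\, w(Q') \;\geq\; \frac{\alpha^q}{C}\, w(Q'),
\end{align*}
which is precisely the $A_{\infty}(Q)$ property with $\beta := \min\{1/2,\, \alpha^q/C\}$ depending only on $[\phi]_{\mathcal{A}}$, $p$, $q$. Feeding this into Remark~\ref{rem:equivalent_characterizations}(c) (whose constants, as noted there, depend only on the $A_\infty$ data and the dimension) produces $C',\varepsilon>0$ depending only on $[\phi]_{\mathcal{A}},p,q,n$ such that~\eqref{eq:phi_RH} holds on $Q$.

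The subtle step is controlling the constant in Jensen's inequality uniformly: the hypothesis $\norm{t\,\indicator_E}_\phi \leq 1$ must hold simultaneously for \emph{every} subcube $Q' \subset Q$ and \emph{every} measurable $E \subset Q'$, and this is exactly what the a priori restriction $t \leq 1/\norm{\indicator_Q}_\phi$ on the outer cube $Q$ purchases. It is also the reason the reverse H\"older estimate is intrinsically local in $Q$, mirroring the local nature of the $A_{\infty}(Q)$ class discussed in the paragraph preceding the theorem.
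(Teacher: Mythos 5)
Your proof is correct and follows essentially the same approach as the paper: both test Jensen's inequality (Theorem~\ref{thm:Jensen}\,\ref{itm:Jensen_b}) on $Q'$ with $f = t\,\indicator_E$, verify $\norm{f}_\phi \le 1$ via $E \subset Q$ and the hypothesis on $t$, and invoke~\eqref{eq:lower-upper1} to produce the $\alpha^{-q}$ factor, yielding $w(E) \gtrsim \alpha^q\,w(Q')$ and hence $A_\infty(Q)$. The only cosmetic difference is that you extract the factor $s^q$ from $\phi(x,ts)$ after applying Jensen, while the paper pulls $1/\alpha$ inside the argument of $\phi$ before applying Jensen; both use the same convexity estimate and reach the same conclusion.
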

\begin{proof}
  
  First we prove that the function $\phi(\cdot,t)$ is of class $A_{\infty}(Q)$ for every fixed $t\in\left(0,\frac{1}{\norm{\indicator_{Q}}_\phi}\right] $. Let $Q'\subset Q$ be any subcube and $E\subset Q'$ any measurable subset such that $|E|/|Q'|\geq \alpha$, where $0<\alpha<1$. For the function $f(x)=t\indicator_{E}(x)$, where $0<t\leq \frac{1}{\norm{\indicator_Q}_{\phi}}$, we have
  \begin{align*}
    \norm{f}_{\phi}=t\norm{\indicator_E}_{\phi}\leq1
  \end{align*}
  and
  \begin{align*}
    \dashint_{Q'}f(y)\,dy=t\cdot\frac{|E|}{|Q'|}\geq\alpha t.
  \end{align*}
  By \eqref{eq:lower-upper1} and generalized Jensen's inequality, Theorem \ref{thm:Jensen}, we have
  \begin{align*}
    \int_{Q'}\phi(x,t)dx &\leq\int_{Q'}\phi\bigg(x,\frac{1}{\alpha}\dashint_{Q'}f(y)\,dy\bigg)dx
    \\
    &\leq\bigg(\frac{1}{\alpha}\bigg)^q \int_{Q'}\phi\big(x,\dashint_{Q'}f(y)\,dy\bigg)dx
    \\
    &\leq C \bigg(\frac{1}{\alpha}\bigg)^q \int_{Q'}\phi(x,|f(x)|)dx
    \\
    &= C \bigg(\frac{1}{\alpha}\bigg)^q  \int_{E}\phi(x,t)dx.
  \end{align*}
  Using Remark \ref{rem:equivalent_characterizations}, we conclude that
  \begin{align*}
  	\left(\dashint_{Q'} \phi(x,t)^{1+\varepsilon} \, dx\right)^{\frac{1}{1+\varepsilon}} \leq C \dashint_{Q'} \phi(x,t) \, dx
  \end{align*}
  for any $Q'\subset Q$. Taking $Q'=Q$ completes the proof.
\end{proof} 

\begin{corollary}[Improved generalized Jensen's inequality]\label{lem:highJensen}
Let $\phi \in \mathcal{A}$ with $\phi(x,t)=\tfrac 1p t^p+ \tfrac 1q a(x)t^q$. Assume that $f\in L^{\phi}(\mathbb{R}^n)$ satisfies $\norm{f}_{\phi}\leq 1$. Then there exist constants $s>1$ and $C>0$, depending only on $p$, $q$, and $[\phi]_{\mathcal{A}}$, such that the following inequality holds:
  \begin{align*}
    \Bigg(\dashint_{Q}\phi^s\bigg(y,\dashint_{Q}|f(x)|\,dx\bigg)\,dy\Bigg)^\frac{1}{s}\leq C \dashint_{Q}\phi(y,|f(y)|)\,dy
  \end{align*} 
  for every cube $Q$. 
\end{corollary}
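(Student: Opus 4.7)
The plan is to combine the self-improving reverse H\"older estimate from Theorem~\ref{thm:A_loc_with_t} with the generalized Jensen's inequality Theorem~\ref{thm:Jensen}, after a small rescaling to match hypotheses. Setting $t_0 := \dashint_Q \abs{f}\,dy$, the target inequality amounts to a reverse H\"older estimate for $\phi(\cdot,t_0)$ followed by Jensen on the right-hand side.

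First I would verify that $t_0$ is controlled by $\norm{\indicator_Q}_\phi^{-1}$ up to a constant. By H\"older's inequality~\eqref{eq:hoelder} and $\norm{f}_\phi \leq 1$ one has $\int_Q \abs{f}\,dy \leq 2\norm{\indicator_Q}_{\phi^*}$, and the Muckenhoupt condition $\phi \in \mathcal{A}$ then gives $\norm{\indicator_Q}_{\phi^*}/\abs{Q} \leq [\phi]_{\mathcal{A}}/\norm{\indicator_Q}_\phi$. Thus $t_0 \leq C_0/\norm{\indicator_Q}_\phi$ with $C_0 := 2[\phi]_{\mathcal{A}}$. Setting $\tilde t := t_0/\max\{1,C_0\}$ brings the argument into the admissible range of Theorem~\ref{thm:A_loc_with_t}, which yields a reverse H\"older inequality at level $\tilde t$ with exponent $s := 1+\varepsilon > 1$ and constants depending only on $n,p,q,[\phi]_{\mathcal{A}}$.

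To transfer this estimate from $\tilde t$ back to $t_0$, I would invoke the $\Delta_2$-comparability~\eqref{eq:lower-upper1}: since $\tilde t \leq t_0 \leq \max\{1,C_0\}\,\tilde t$, it follows that $\phi(x,t_0) \eqsim \phi(x,\tilde t)$ with constants depending only on $p$, $q$ and $[\phi]_{\mathcal{A}}$. Inserting this comparability on both sides of the reverse H\"older inequality produces the same inequality with $t_0$ in place of $\tilde t$. Finally the generalized Jensen's inequality Theorem~\ref{thm:Jensen}\ref{itm:Jensen_b}, which applies since $\norm{f}_\phi \leq 1$, bounds the remaining average $\dashint_Q \phi(x,t_0)\,dx = \dashint_Q \phi(x,\dashint_Q \abs{f}\,dy)\,dx$ by $\dashint_Q \phi(x,\abs{f})\,dx$, closing the chain.

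The only real obstacle is the mismatch between the natural value $t_0$ coming from H\"older and the strict hypothesis $t \leq 1/\norm{\indicator_Q}_\phi$ in Theorem~\ref{thm:A_loc_with_t}; the $\Delta_2$-scaling absorbs this constant-factor discrepancy cleanly, so I do not expect any further difficulty. Everything else is a direct chaining of inequalities already established in the excerpt.
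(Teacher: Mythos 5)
Your proof is correct and takes essentially the same route as the paper: bound $t_0 = \dashint_Q \abs{f}\,dy$ by $C/\norm{\indicator_Q}_\phi$ via H\"older and the Muckenhoupt condition, rescale into the admissible range of Theorem~\ref{thm:A_loc_with_t} using the $\Delta_2$-comparability, and then close with Theorem~\ref{thm:Jensen}. The only difference is that you spell out the rescaling $\tilde t := t_0 / \max\{1, 2[\phi]_{\mathcal{A}}\}$ explicitly, whereas the paper simply writes ``using the $\Delta_2$-condition for $\phi$, Theorem~\ref{thm:A_loc_with_t} and Theorem~\ref{thm:Jensen}'' — so your argument is a slightly more detailed version of the same chain.
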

\begin{proof}
  Since $\norm{f}_{\phi}\leq1$, using H\"older's inequality and the fact that $\phi\in\mathcal{A}$ we get that 
  \begin{align*}
    \frac{1}{|Q|}\int_{Q}|f(y)|\,dy\leq 2\frac{1}{|Q|}\norm{f}_{\phi}\norm{\indicator_{Q}}_{\phi^*}\leq\frac{2[\phi]_{\mathcal{A}}}{\norm{\indicator_{Q}}_{\phi}}.
  \end{align*} 
  Now using the $\Delta_{2}$-condition for $\phi$, Theorem \ref{thm:A_loc_with_t} and Theorem \ref{thm:Jensen} we get 
  \begin{align*}
    \Bigg(\dashint_{Q}\phi^s\left(y,\dashint_{Q}|f(x)|\,dx\right)\,dy\Bigg)^\frac{1}{s}\leq c\dashint_{Q}\phi\left(y,\dashint_{Q}|f(x)|\,dx\right)\,dy\leq c \dashint_{Q}\phi(y,|f(y)|)\,dy.
  \end{align*} 
Note that $s=1+\varepsilon$, where $\varepsilon$ is the same as in Theorem \ref{thm:A_loc_with_t}.
\end{proof}

Before we proceed to the next lemma we introduce some useful notation. For $\theta\in(0,\infty)$, let us define the function $\psi_{(\theta)}$ by
\begin{align*}
	\psi_{(\theta)}(x,t)\coloneq\phi(x,t^{\theta})=\tfrac{1}{p}t^{p\theta}+a(x)\tfrac{1}{q}t^{q\theta}.
\end{align*} 
\begin{lemma}\label{lem:Improved_inside}
 Let $\phi \in \mathcal{A}$ with $\phi(x,t)=\tfrac 1p t^p+ \tfrac 1q a(x)t^q$. Assume that $f\in L^{\phi}(\mathbb{R}^n)$ satisfies $\norm{f}_{\phi}\leq 1$. Then there exist $s>1$ and $c>0$ depending only on $[\phi]_{\mathcal{A}}$, $p$, $q$ and $n$, such that the following inequality holds:
  \begin{align}\label{eq:left-open-musielak}
    \dashint_{Q}\phi\bigg(y,\bigg(\dashint_{Q}|f(x)|^s\,dx\bigg)^{\frac{1}{s}}\bigg)\,dy\leq c \dashint_{Q}\phi(y,|f(y)|)\,dy
  \end{align} 
\end{lemma}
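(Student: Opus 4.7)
The plan is to recognise the claimed inequality as an instance of the generalized Jensen's inequality of Theorem~\ref{thm:Jensen}\,\ref{itm:Jensen_b}, applied to a suitably rescaled generalized $N$-function. Concretely, setting $g := |f|^s$ and $\psi := \psi_{(1/s)}$, so that $\psi(x,t) = \phi(x, t^{1/s}) = \tfrac{1}{p} t^{p/s} + \tfrac{1}{q} a(x)\, t^{q/s}$, the left-hand side of Lemma~\ref{lem:Improved_inside} equals $\dashint_Q \psi(y, \dashint_Q g)\,dy$ and the right-hand side equals $c\,\dashint_Q \psi(y, g)\,dy$.

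I would pick $s = 1+\varepsilon$ with $\varepsilon>0$ small and, in particular, $s<p$; the latter ensures that $\psi$ is again a double phase generalized $N$-function, with Simonenko indices $p/s$ and $q/s$. From the unit ball property~\eqref{eq:unit-ball},
\[
\rho_\psi(g) = \int_{\RR^n} \psi(x,|f|^s)\,dx = \int_{\RR^n} \phi(x,|f|)\,dx = \rho_\phi(f) \le 1,
\]
so $\norm{g}_\psi \le 1$. Granted $\psi \in \mathcal{A}$, Theorem~\ref{thm:Jensen}\,\ref{itm:Jensen_b} applied to $(\psi, g)$ delivers the desired inequality with constants depending only on $p$, $q$, $n$ and $[\psi]_{\mathcal{A}}$.

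The main obstacle is to verify $\psi \in \mathcal{A}$, with $[\psi]_{\mathcal{A}}$ controlled in terms of $p$, $q$, $n$ and $[\phi]_{\mathcal{A}}$ alone, for $s>1$ close enough to $1$. This is a self-improvement (left-openness) statement for the class $\mathcal{A}$, analogous to the classical $A_p \Rightarrow A_{p-\delta}$ for Muckenhoupt weights. My plan is to estimate the two Luxemburg norms in $[\psi]_{\mathcal{A}} = \sup_Q \norm{\indicator_Q}_{\psi} \norm{\indicator_Q}_{\psi^*}/|Q|$ separately. The unit ball property instantly gives $\norm{\indicator_Q}_{\psi} = \norm{\indicator_Q}_\phi^s$. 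For the conjugate norm I rely on the formula $\psi^*(x,u) \eqsim \min\{u^{(p/s)'},\, a(x)^{1-(q/s)'} u^{(q/s)'}\}$ provided by Lemma~\ref{lem:phi*} applied to $\psi$, combined with the reverse H\"older inequality of Theorem~\ref{thm:A_loc_with_t} applied to $\phi^*$ (which lies in $\mathcal{A}$ with the same constant). The reverse H\"older supplies precisely the higher integrability of $a$ needed to absorb the change of exponents from $(p',q')$ to $((p/s)',(q/s)')$ when $s$ is close to $1$, yielding a bound of the form $\norm{\indicator_Q}_{\psi^*} \lesssim \norm{\indicator_Q}_{\phi^*}^s\,|Q|^{1-s}$. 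Multiplying with $\norm{\indicator_Q}_{\psi} = \norm{\indicator_Q}_\phi^s$ and using $\norm{\indicator_Q}_\phi \norm{\indicator_Q}_{\phi^*} \eqsim |Q|$ then yields $[\psi]_{\mathcal{A}} \lesssim 1$ uniformly in $Q$, closing the argument.
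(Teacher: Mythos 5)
Your overall scheme coincides with the paper's: both pass to the rescaled function $\psi_{(1/s)}(x,t)=\phi(x,t^{1/s})$, observe that $\rho_{\psi_{(1/s)}}(|f|^s)=\rho_\phi(f)$ so that the norm constraint is preserved, and then apply Theorem~\ref{thm:Jensen}\,\ref{itm:Jensen_b} to $\psi_{(1/s)}$ and $|f|^s$. The whole burden of the lemma, therefore, is to show $\psi_{(1/s)}\in\mathcal{A}$ for some $s>1$ with $[\psi_{(1/s)}]_{\mathcal{A}}$ controlled by $p$, $q$, $n$, $[\phi]_{\mathcal{A}}$, i.e.\ left-openness of $\mathcal{A}$. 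The paper obtains this by first promoting $\phi\in\mathcal{A}$ to $\phi\in\mathcal{A}_{\mathrm{glob}}$ and then invoking the abstract left-openness result \cite[Theorem~5.4.15]{DHHR}, whereas you attempt a direct, self-contained derivation from the reverse H\"older inequality of Theorem~\ref{thm:A_loc_with_t}. That is a genuinely different and in principle attractive route.

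However, the key step in your version is not actually established, and as sketched it contains a concrete obstruction. You assert that reverse H\"older for $\phi^*$ ``supplies precisely the higher integrability of $a$ needed to absorb the change of exponents'' and yields $\norm{\indicator_Q}_{\psi^*}\lesssim \norm{\indicator_Q}_{\phi^*}^s|Q|^{1-s}$ — but note that, via $\norm{\indicator_Q}_{\phi}\norm{\indicator_Q}_{\phi^*}\eqsim|Q|$ and $\norm{\indicator_Q}_{\psi}=\norm{\indicator_Q}_\phi^s$, this claimed bound is \emph{equivalent} to $\psi\in\mathcal{A}$, so it is precisely what must be proved, not a lemma en route. The mechanism you propose cannot work by a pointwise comparison: $\phi^*(x,t)^{1+\varepsilon}\eqsim\min\{t^{p'(1+\varepsilon)},a(x)^{(1-q')(1+\varepsilon)}t^{q'(1+\varepsilon)}\}$, while $\psi^*(x,u)\eqsim\min\{u^{(p/s)'},a(x)^{1-(q/s)'}u^{(q/s)'}\}$, and matching the $p$-part forces $(1+\varepsilon)(p-s)=p-1$ while matching the $q$-part forces $(1+\varepsilon)(q-s)=q-1$; subtracting gives $\varepsilon=0$, so no single pair $(s,\varepsilon)$ aligns both phases. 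The reverse H\"older of Theorem~\ref{thm:A_loc_with_t} improves integrability in the $x$-variable at fixed $t$; it does not by itself renormalize the growth exponents in $t$. To close the argument one would need a genuinely two-phase estimate, splitting $Q$ according to which branch of the $\min$ is active for $\psi^*$ and exploiting the resulting pointwise constraint on $a(x)$ together with the unit-ball identities for $\phi$ and $\phi^*$ — or else fall back, as the paper does, on the abstract left-openness result of \cite{DHHR}.
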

\begin{proof}
  If $\phi\in\mathcal{A}$ then using Theorem \ref{thm:Jensen} we can easily show that $\phi\in\mathcal{A}_{\text{glob}}$. By~\cite[Theorem 5.4.15]{DHHR}, there exists\footnote{%
    Theorem~5.4.15 in \cite{DHHR} is formulated for proper N-functions, which ensures that $L^\phi$ is a Banach function space. This requires that $\indicator_E \in L^\phi \cap L^{\phi^*}$ for measurable set with finite measure. However, for the theory of Banach function spaces it is enough assume this property for every compact subset of~$\RRn$. See also the comments in~\cite{Ler17}.
  } a $\theta_0\in (0,1)$, depending only on the $\mathcal{A}_{\text{glob}}$-constant of $\phi$ and the $\Delta_{2}$-constant of its conjugate $\phi^*$, such that $\psi_{(\theta)}\in \mathcal{A}_{\text{glob}}$ for all $\theta\geq \theta_{0}$. Since $\mathcal{A}_{\text{glob}}$ implies $\mathcal{A}$, we have $\psi_{(\theta)}\in\mathcal{A}$, and by Theorem~\ref{thm:Jensen}, $\psi_{(\theta)}$ satisfies the generalized Jensen's inequality. 
  On the other hand, from the definition of $\psi_{(\theta)}$, we have that $\norm{g}_{	\psi_{(\theta)}}^{\theta}=\norm{\abs{g}^\theta}_\phi$.
  Hence $\norm{g}_{	\psi_{(\theta)}}\leq 1$ if and only if $\norm{f}_\phi
  \leq 1$, where $g\coloneq \abs{f}^{1/\theta}$. Now we can use Theorem~\ref{thm:Jensen} for $\psi_{(\theta)}$ and for $g$. Then we get that 
  \begin{align*}
    \dashint_{Q}\psi_{(\theta)}\bigg(y,\dashint_{Q}|g(x)|\,dx\bigg)\,dy\leq c \dashint_{Q}\psi_{(\theta)}(y,|g(y)|)\,dy.
  \end{align*}
  By the definition of $\psi_{(\theta)}$ and the fact that $g= \abs{f}^{1/\theta}$ we get the desired inequality \eqref{eq:left-open-musielak}, where $s\coloneq\frac{1}{\theta}$.
\end{proof}

\subsection{Boundedness of the maximal function}

In this subsection we prove the boundedness of the maximal function. For this we use the well-known dyadic maximal function, which essentially simplifies some technical steps. Note that the results of this sections are derived solely from the generalized Jensen's inequality.

We say that a family of cubes $\mathcal{D}$ is a dyadic grid if:
\begin{enumerate}
	\item[(a)] for any $Q\in \mathcal{D}$ its sidelength $\ell_Q$ is of the form $2^{k}$, $k\in \mathbb{Z}$;
	\item[(b)] $Q_1\cap Q_2\in\{Q_1,Q_2,\emptyset\}$ for any $Q_1,Q_2\in \mathcal{D}$;
	\item[(c)] for every $k\in \mathbb{Z}$, the cubes of a fixed sidelength $2^{k}$ form a partition of $\mathbb{R}^n$.
\end{enumerate}
Given a dyadic grid $\mathcal{D}$ and any locally integrable function $f$, we consider the associated dyadic maximal function $M^{\mathcal{D}}f$ defined by
\begin{align*}
	M^{\mathcal{D}}f(x) = \sup_{x \in Q, Q \in \mathcal{D}} \frac{1}{|Q|} \int_Q |f(y)| \, dy.
\end{align*}
where the supremum is taken over all $Q\in\mathcal{D}$ containing $x$.
It is clear that $M^{\mathcal{D}}f\leq Mf.$ However, this inequality can be reversed, in a sense, as the following lemma shows (its proof can be found in \cite[Lemma 2.5]{HLP}).
\begin{lemma}\label{lem:dyadicMax}
  There are $3^{n}$ dyadic grids $\mathcal{D}_{\alpha}$ such that for every cube $Q\subset\RRn$, there exists a cube $Q_{\alpha}\in \mathcal{D}_{\alpha}$ such that $Q\subset Q_{\alpha}$ and $|Q_{\alpha}|\leq 6^{n}|Q|.$
\end{lemma}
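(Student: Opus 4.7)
I would follow the classical ``one-third trick'' of Okikiolu (used widely in modern dyadic harmonic analysis, e.g.\ by Hyt\"onen--P\'erez and by Lerner). The idea is to explicitly construct $3^n$ shifted dyadic systems and then verify the geometric inclusion property coordinatewise.

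First, I would construct the grids. In one dimension, for each $\alpha \in \{0, \tfrac{1}{3}, \tfrac{2}{3}\}$ define
\begin{align*}
  \mathcal{D}^{(1)}_\alpha = \bigcup_{k\in\mathbb{Z}} \bigl\{\, [2^{k}(m + (-1)^k \alpha),\, 2^{k}(m+1 + (-1)^k \alpha))\,:\, m \in \mathbb{Z}\,\bigr\},
\end{align*}
and in $\mathbb{R}^n$ take products parameterized by $\alpha \in \{0,\tfrac{1}{3},\tfrac{2}{3}\}^n$. The alternating factor $(-1)^k$ is the key point: it ensures that each cube of sidelength $2^{k}$ decomposes into $2^n$ children of sidelength $2^{k-1}$ lying in the same grid, so that nesting (property (b)) holds across scales. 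Properties (a) and (c) are then immediate from the definition.

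Second, and this is the geometric heart, I would show: given any cube $Q \subset \mathbb{R}^n$ of sidelength $\ell$, choose the unique $k\in\mathbb{Z}$ with $2^{k-1} < 3\ell \leq 2^{k}$, so that in particular $2^{k} \leq 6\ell$. I claim that for some $\alpha$ there is a cube $Q_\alpha \in \mathcal{D}_\alpha$ of sidelength $2^{k}$ with $Q \subset Q_\alpha$. By the product structure it suffices to check this coordinatewise: the projection of $Q$ on the $i$-th axis is an interval of length $\ell$, and the three shifted partitions of $\mathbb{R}$ into intervals of length $2^{k}$ have relative offsets $0, \tfrac{1}{3} 2^{k}, \tfrac{2}{3}2^{k}$; since $2^{k} \geq 3\ell$, among the three shifts at least one interval entirely contains the projection (a direct case check on where the left endpoint of the projection falls modulo $2^{k}$). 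Independence across coordinates lets me choose the $n$ shifts simultaneously. The volume estimate is then immediate: $|Q_\alpha|/|Q| = (2^{k}/\ell)^n \leq 6^n$.

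\textbf{Main obstacle.} The routine bookkeeping is the coordinatewise case analysis of where the endpoint of the projection sits relative to the three shifted partitions. The subtler point is ensuring genuine nesting of each $\mathcal{D}_\alpha$ across scales; a constant shift by $\alpha$ only works at a single scale, and the alternating sign $(-1)^k\alpha$ is precisely what forces children of a cube to again lie in the same grid. This is the step I would write out most carefully, since without it (a) and (b) would fail and the ``dyadic grid'' terminology would be abused.
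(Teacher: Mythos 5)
Your proof is correct, and it gives a complete argument for a lemma that the paper itself does not prove but instead cites from the literature (Lemma 2.5 of the reference labelled HLP). Your construction is the standard ``one-third trick'' with the alternating sign $(-1)^k\alpha$ to enforce nesting across scales; let me just confirm the two points you flagged as requiring care. For nesting: an interval $I=[2^k(m+(-1)^k\alpha),\,2^k(m+1+(-1)^k\alpha))$ at level $k$ has left endpoint $2^{k-1}(2m+2(-1)^k\alpha)$, and this is a level-$(k-1)$ endpoint $2^{k-1}(m'+(-1)^{k-1}\alpha)$ precisely when $m'=2m+3(-1)^k\alpha$, which is an integer because $3\alpha\in\{0,1,2\}$; since level-$(k-1)$ intervals tile the line and $|I|=2\cdot 2^{k-1}$, $I$ is exactly the union of two consecutive level-$(k-1)$ intervals, so property (b) holds. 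For the containment: fixing $k$ with $2^{k-1}<3\ell\le 2^k$, the three endpoint lattices at scale $2^k$ are, modulo $2^k$, shifted by $\{0,2^k/3,2\cdot 2^k/3\}$; given the left endpoint $a$ of the projection of $Q$, the three quantities $\{a/2^k-\alpha\}$ (fractional parts) are spaced exactly $1/3$ apart in $[0,1)$, so at least one lies in $[0,1/3)$, giving a gap $>2\cdot 2^k/3\ge 2^k/3\ge \ell$ to the next endpoint on the right; this yields a containing interval, and the product structure lets you pick the shift coordinatewise. The volume bound $|Q_\alpha|=(2^k)^n<(6\ell)^n=6^n|Q|$ follows from $2^{k-1}<3\ell$. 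In short: the proposal is sound, self-contained, and supplies the proof that the paper omits by citation; it is essentially the same approach one would find in the cited source.
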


We obtain from this lemma that for all $x\in \RRn$,
\begin{equation}\label{revested dyadic}
  Mf(x)\leq 6^{n}\sum_{\alpha=1}^{3^{n}}M^{\mathcal{D}_{\alpha}}f(x).
\end{equation}

The following lemma is a standard version of the Calder\'{o}n-Zygmund decomposition (see, e.g. \cite[Theorem 5.3.1]{Grafakos2014classical}).

\begin{lemma}\label{lem:dyadic_dec}
  Suppose $\mathcal{D}$ is a dyadic grid. Let $f\in L^{p}(\RRn)$ with $1\leq p<\infty$, and let $\gamma>1.$ Assume that for $k\in\mathbb Z$,
\begin{align*}
\Omega_{k}:=\{x\in   \RRn:\,M^{\mathcal{D}}f(x)>\gamma^{k}\}\neq\emptyset.
\end{align*}
Then $\Omega_{k}$ can be written as a union of pairwise disjoint  maximal cubes $Q_{k,l}\in \mathcal{D}$, $l\in \mathbb N$, satisfying
  \begin{align}\label{sparse}
    \begin{aligned}
     & \gamma^{k}<\frac{1}{|Q_{k,l}|}\int_{Q_{k,l}}|f(x)|dx \leq 2^{n}\gamma^{k},\\
     & |Q_{k,l} \cap\Omega_{k+m}|  \leq 2^{n}(1/\gamma)^{m}|Q_{k,l}|
      \quad \text{for }\ m\in\mathbb{Z}_{+}.
    \end{aligned}
  \end{align}
\end{lemma}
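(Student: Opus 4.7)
The plan is to carry out the standard Calder\'on--Zygmund stopping time construction in the dyadic grid $\mathcal{D}$. Fix $k$ with $\Omega_k \neq \emptyset$. For each $x \in \Omega_k$ there exists some $Q \in \mathcal{D}$ with $x \in Q$ and $\frac{1}{|Q|}\int_Q |f|\,dx > \gamma^k$. Because $f \in L^p(\RRn)$, H\"older's inequality gives $\frac{1}{|Q|}\int_Q |f|\,dx \le |Q|^{-1/p}\|f\|_p \to 0$ as $\ell_Q \to \infty$, so among all dyadic cubes containing $x$ whose average of $|f|$ exceeds $\gamma^k$, there is a maximal one. Let $\{Q_{k,l}\}_{l\in\mathbb N}$ denote the collection of all such maximal dyadic cubes (for varying $x \in \Omega_k$). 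Two different maximal cubes in $\mathcal{D}$ cannot intersect without one containing the other, which would contradict maximality; hence the $Q_{k,l}$ are pairwise disjoint, and by construction $\Omega_k = \bigcup_l Q_{k,l}$.

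For the first line of \eqref{sparse}, the lower bound $\gamma^k < \frac{1}{|Q_{k,l}|}\int_{Q_{k,l}} |f|\,dx$ is built into the selection. For the upper bound, let $\widehat{Q}_{k,l} \in \mathcal{D}$ be the dyadic parent of $Q_{k,l}$. By maximality, $\widehat{Q}_{k,l}$ fails the selection criterion, so $\frac{1}{|\widehat{Q}_{k,l}|}\int_{\widehat{Q}_{k,l}} |f|\,dx \le \gamma^k$, and since $|\widehat{Q}_{k,l}| = 2^n |Q_{k,l}|$ and $Q_{k,l}\subset \widehat{Q}_{k,l}$, this yields
\begin{align*}
\frac{1}{|Q_{k,l}|}\int_{Q_{k,l}} |f|\,dx \le \frac{2^n}{|\widehat{Q}_{k,l}|}\int_{\widehat{Q}_{k,l}} |f|\,dx \le 2^n \gamma^k.
\end{align*}

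For the second line of \eqref{sparse}, fix $Q_{k,l}$ and $m \in \mathbb{Z}_+$. Every cube $Q_{k+m,j}$ of the level $k+m$ decomposition that meets $Q_{k,l}$ must satisfy $Q_{k+m,j}\subsetneq Q_{k,l}$: indeed, if $Q_{k,l} \subseteq Q_{k+m,j}$, then because $\gamma^{k+m} > \gamma^k$ the selection of $Q_{k+m,j}$ would mean that $Q_{k,l}$ itself is properly contained in some dyadic cube with average exceeding $\gamma^k$, violating the maximality of $Q_{k,l}$. Therefore $\Omega_{k+m} \cap Q_{k,l}$ is the disjoint union of those $Q_{k+m,j}$ contained in $Q_{k,l}$, and using the lower bound in~\eqref{sparse} at level $k+m$ together with the upper bound at level $k$,
\begin{align*}
|\Omega_{k+m} \cap Q_{k,l}| = \sum_{Q_{k+m,j}\subset Q_{k,l}} |Q_{k+m,j}| < \gamma^{-(k+m)}\!\!\sum_{Q_{k+m,j}\subset Q_{k,l}}\int_{Q_{k+m,j}}|f|\,dx \le \gamma^{-(k+m)}\int_{Q_{k,l}} |f|\,dx \le 2^n \gamma^{-m}|Q_{k,l}|.
\end{align*}

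There is no real obstacle here; the lemma is entirely combinatorial/measure theoretic and does not interact with the Muckenhoupt condition or the Orlicz structure of $\phi$. The only points requiring a little care are (i) the $L^p$ hypothesis, used exactly once to guarantee that the supremum defining the maximal cube is actually attained, and (ii) the observation that no $Q_{k+m,j}$ can strictly contain some $Q_{k,l}$, which is what forces the containment $Q_{k+m,j}\subset Q_{k,l}$ and thus gives the geometric decay in $m$.
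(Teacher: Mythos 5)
Your proof is correct and is the standard Calder\'on--Zygmund stopping-time argument, which is exactly what the paper relies on: the paper does not reprove the lemma but cites \cite[Theorem~5.3.1]{Grafakos2014classical}, and your construction matches that reference. One cosmetic remark: your claim that any $Q_{k+m,j}$ meeting $Q_{k,l}$ satisfies $Q_{k+m,j}\subsetneq Q_{k,l}$ should be weakened to $Q_{k+m,j}\subseteq Q_{k,l}$, since equality is not excluded (if $Q_{k,l}=Q_{k+m,j}$ one only gets $\gamma^{m}<2^n$, which is consistent with the claimed bound); your final chain of inequalities is written with $\subseteq$ and so is unaffected.
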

We now state and prove our first main result. Recall that 
	$\psi_{(s)}(x,t)\coloneq\phi(x,t^{s}).$
\begin{theorem}\label{thm:M_bounded}
	Let $\phi(x,t)=\tfrac 1p t^p+ a(x)\tfrac 1qt^q$ with $1<p<q < \infty$ and $a\,:\, \RRn \to [0,\infty)$ be measurable. Then the following assertions are equivalent: 
	\begin{enumerate}
  \item\label{itm:M_bounded_Muck} $\phi \in \mathcal{A}$.
  \item \label{itm:M_bounded_modular} The inequality
		\begin{align*}
			\int_{\RRn}\phi(x,Mf(x))\,dx\lesssim\int_{\RRn}\phi(x,\abs{f(x)})\,dx
		\end{align*}
		holds for all $f\in L^\phi(\RRn)$ with $\norm{f}_\phi\leq1$.
  \item\label{itm:M_bounded_norm} The maximal operator is bounded on $L^\phi(\RRn)$.
  \item \label{itm:M_bounded_modular_dual} The inequality
		\begin{align*}
			\int_{\RRn}\phi^*(x,Mg(x))\,dx\lesssim\int_{\RRn}\phi^*(x,\abs{g(x)})\,dx
		\end{align*}
		holds for all $g\in L^{\phi^*}(\RRn)$ with $\norm{g}_{\phi^*}\leq1$.
  \item\label{itm:M_bounded_norm_dual} The maximal operator is bounded on $L^{\phi^*}(\RRn)$.
  \item \label{itm:M_bounded_left_open} There exists $0 < s < 1$ such that the maximal operator is bounded on $L^{\psi_{(s)}}(\RRn)$.
	\end{enumerate}
\end{theorem}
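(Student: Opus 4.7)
The plan is to prove the six conditions equivalent by closing the $\phi$-side chain $(a)\Rightarrow(b)\Rightarrow(c)\Rightarrow(a)$ and adding the side link $(a)\Rightarrow(f)\Rightarrow(c)$. The dual statements $(a)\Leftrightarrow(d)\Leftrightarrow(e)$ then come for free from the identity $[\phi]_{\mathcal A}=[\phi^*]_{\mathcal A}$: the whole $\phi$-side argument transfers verbatim to $\phi^*$ in place of $\phi$.

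The routine implications are direct. For $(b)\Rightarrow(c)$, given $f$ I would rescale by $\lambda=\|f\|_\phi$, apply (b) to $f/\lambda$, and convert the resulting modular estimate to a norm bound via the $\Delta_2$-condition and the unit-ball property~\eqref{eq:unit-ball}. For $(c)\Rightarrow(a)$, the pointwise inequality $Mf\ge\indicator_Q\,\dashint_Q|f|$ renders the single-cube averages uniformly bounded on $L^\phi$, which by the remark after~\eqref{eq:classA} is equivalent to $\phi\in\mathcal A$. For $(a)\Rightarrow(f)$, Lemma~\ref{lem:Improved_inside} (relying on the left-openness of $\mathcal A_{\mathrm{glob}}$) produces $\theta\in(0,1)$ with $\psi_{(\theta)}\in\mathcal A$; setting $s=\theta$ and applying $(a)\Rightarrow(c)$ to $\psi_{(s)}$ in place of $\phi$ yields the norm bound on $L^{\psi_{(s)}}$. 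For $(f)\Rightarrow(c)$, concavity of $t\mapsto t^s$ for $s<1$ gives $\dashint h=\dashint(h^{1/s})^s\le(\dashint h^{1/s})^s$, hence $Mh\le(M(h^{1/s}))^s$ pointwise; combined with the identity $\|g\|_{\psi_{(s)}}=\|g^s\|_\phi^{1/s}$ (a direct unit-ball computation), this transfers the $L^{\psi_{(s)}}$-bound to $L^\phi$.

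The main step is $(a)\Rightarrow(b)$. By Lemma~\ref{lem:dyadicMax} I may reduce to modular boundedness of the dyadic maximal function $M^{\mathcal D}$ on a single dyadic grid. For $\|f\|_\phi\le1$ I pick $\gamma>2^{n+1}$ and run the Calder\'on-Zygmund decomposition of Lemma~\ref{lem:dyadic_dec}; the sets $E_{k,l}=Q_{k,l}\setminus\Omega_{k+1}$ are pairwise disjoint, satisfy $|E_{k,l}|\ge\tfrac12|Q_{k,l}|$, and cover $\{M^{\mathcal D}f>0\}$ up to a null set. Since $M^{\mathcal D}f\le\gamma\dashint_{Q_{k,l}}|f|$ on $E_{k,l}$, inequality~\eqref{eq:lower-upper1} yields
\begin{align*}
\int\phi(x,M^{\mathcal D}f)\,dx\le\gamma^q\sum_{k,l}\int_{E_{k,l}}\phi\Big(x,\dashint_{Q_{k,l}}|f|\Big)\,dx.
\end{align*}
To collapse the right-hand side into $\int\phi(x,|f|)\,dx$ I would combine four ingredients: (i) H\"older together with the $\mathcal A$-condition to produce $\dashint_{Q_{k,l}}|f|\lesssim 1/\|\indicator_{Q_{k,l}}\|_\phi$, placing us in the range of validity of Theorem~\ref{thm:A_loc_with_t}; (ii) the resulting reverse-H\"older/$A_\infty$ property for the frozen weight $x\mapsto\phi(x,\dashint_{Q_{k,l}}|f|)$ on $Q_{k,l}$, which makes $\int_{Q_{k,l}}$ and $\int_{E_{k,l}}$ of this weight comparable; (iii) the generalized Jensen of Theorem~\ref{thm:Jensen} to replace $\phi(x,\dashint|f|)$ by $\phi(x,|f|)$ on each CZ cube; and (iv) the disjointness of $\{E_{k,l}\}$ together with the sparse/Carleson structure of $\{Q_{k,l}\}$ inherited from the stopping-time construction to absorb the nested overlaps.

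The principal obstacle is exactly this last collapse. The sparse-domination machinery familiar from the weighted $L^p$ theory has to be adapted to the non-homogeneous Orlicz modular, where no simple pointwise Jensen estimate is available; Theorem~\ref{thm:A_loc_with_t}---the $A_\infty$-type reverse H\"older tailored to level sets of $\phi$ at a fixed height, itself a consequence of Theorem~\ref{thm:Jensen}---is the non-trivial new tool that makes this adaptation possible and is precisely what forces the double phase structure rather than a more general Muckenhoupt generalized Orlicz setting.
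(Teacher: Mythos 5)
Your overall architecture is sound: closing $(a)\Rightarrow(b)\Rightarrow(c)\Rightarrow(a)$ together with $(a)\Rightarrow(f)\Rightarrow(c)$, and obtaining $(d)$, $(e)$ for free from $[\phi]_{\mathcal A}=[\phi^*]_{\mathcal A}$, is a valid and equally economical ordering (the paper instead cycles through all six statements, reapplying the $(a)\Rightarrow(b)$ argument to $\phi^*$). The routine links $(b)\Rightarrow(c)$, $(c)\Rightarrow(a)$, $(a)\Rightarrow(f)$ and $(f)\Rightarrow(c)$ are correct as you sketch them; in particular $Mh\le\bigl(M(h^{1/s})\bigr)^s$ from concavity of $t\mapsto t^s$ and the identity $\|g\|_{\psi_{(s)}}=\bigl\||g|^s\bigr\|_\phi^{1/s}$ do indeed transfer the $L^{\psi_{(s)}}$-bound to $L^\phi$.

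The gap is in the main step $(a)\Rightarrow(b)$. Your setup --- a single Calder\'on--Zygmund decomposition with pairwise-disjoint $E_{k,l}=Q_{k,l}\setminus\Omega_{k+1}$, $|E_{k,l}|\ge\tfrac12|Q_{k,l}|$, and the reduction $\int\phi(x,M^{\mathcal D}f)\,dx\lesssim\sum_{k,l}\int_{E_{k,l}}\phi\bigl(x,\dashint_{Q_{k,l}}|f|\bigr)\,dx$ --- is fine. But from there, passing to $\int_{Q_{k,l}}$ and applying Jensen yields $\sum_{k,l}\int_{Q_{k,l}}\phi(x,|f|)\,dx$, and this sum does \emph{not} collapse: the CZ cubes at distinct stopping heights $k$ are nested, so a point $x$ with large $M^{\mathcal D}f(x)$ lies in many of them and the overlap multiplicity is unbounded. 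In the classical weighted theory this is cured by rewriting the $Q$-averages of $f$ as $\sigma$-averages with $\sigma=w^{1-p'}$, using $\sigma\in A_\infty$ to pass between $\sigma(Q)$ and $\sigma(E_Q)$, and then invoking the universal $L^p(\sigma)$-boundedness of the $\sigma$-dyadic maximal function; there is no analogue of this change of measure in the inhomogeneous Orlicz setting, and Theorem~\ref{thm:A_loc_with_t} on its own does not supply one. Your item (iv), ``absorb the nested overlaps by the sparse/Carleson structure,'' is precisely the missing idea, and it is where the proof would stall.

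The paper resolves this very differently. It expresses $\int\phi(x,M^{\mathcal D}f)\,dx$ by layer cake as a sum over dyadic heights $2^k$, splits $f$ at each height into a high part $f_{0,k}=f\,\indicator_{\{|f|>2^{k+1}\}}$ and a low part $f_{1,k}=f\,\indicator_{\{|f|\le2^{k+1}\}}$, and decomposes $\{M^{\mathcal D}f_{j,k}>2^k\}$ separately. It then rewrites $\phi(x,2^k)=\psi_{(s_j)}(x,2^{k/s_j})$ with $s_0\in(1/p,1)$ for $j=0$ and $s_1>1$ for $j=1$, applies the generalized Jensen inequality for $\psi_{(s_j)}$ to the rescaled functions $2^{k(1/s_j-1)}|f_{j,k}|$, and uses the convexity bound $\psi_{(s_j)}(x,\theta a)\le\theta\,\psi_{(s_j)}(x,a)$ with $\theta=|f_{j,k}|^{1-1/s_j}2^{(k+1)(1/s_j-1)}\le1$ (the sign of $1-1/s_j$ matches the direction of the cut). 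This produces a geometric series in $k$ that resums to $|f|^{1/s_j-1}$ and cancels the remaining factor, giving $\int\phi(x,|f|)\,dx$. In other words, the $\psi_{(s)}$ machinery you had earmarked only for the auxiliary step $(a)\Rightarrow(f)$ is in fact the load-bearing device inside $(a)\Rightarrow(b)$ itself, and without a mechanism of this type the sparse sum you set up cannot be closed.
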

\begin{proof}
	
  The proof of the theorem will be carried out in the following  order: 	$\ref{itm:M_bounded_Muck}\Rightarrow\ref{itm:M_bounded_modular}\Rightarrow\ref{itm:M_bounded_norm}\Rightarrow\ref{itm:M_bounded_modular_dual}\Rightarrow\ref{itm:M_bounded_norm_dual}\Rightarrow\ref{itm:M_bounded_left_open}\Rightarrow\refeq{itm:M_bounded_Muck}$.
  First we prove the case $\ref{itm:M_bounded_Muck}\Rightarrow\ref{itm:M_bounded_modular}.$ The argument for $\ref{itm:M_bounded_Muck}\Rightarrow\ref{itm:M_bounded_modular}$ is inspired by that of \cite[Theorem 5.5.12]{DHHR}, but our modifications allow for a direct proof of the modular inequality. Due to Lemma~\ref{lem:dyadicMax}, it suffices to prove the theorem for the dyadic maximal function $M^{\mathcal{D}}$ associated with an arbitrary dyadic grid $\mathcal{D}$. 
  Let $f\in L^{\phi}(\RRn)$ be such that $\int_{\RRn}\phi(x,\abs{f(x)})\,dx\leq 1$. Using Fubini's theorem we have that
  \begin{align*}
    \int_{\RRn}\phi(x,M^{\mathcal{D}}f(x))\,dx
    &= \int_{0}^{\infty}\int_{\RRn}\phi'(x,\lambda)\indicator_{\{M^{\mathcal{D}}f(x)>\lambda\}}\,dx\,d\lambda \\
    &= \sum_{k\in \mathbb{Z}}\int_{2^{k+1}}^{2^{k+2}}\int_{\RRn}\phi'(x,\lambda)\indicator_{\{M^{\mathcal{D}}f(x)>\lambda\}}\,dx\,d\lambda \\
    &\lesssim \sum_{k\in \mathbb{Z}}\int_{\RRn}\phi(x,2^{k+2})\indicator_{\{M^{\mathcal{D}}f(x)>2^{k+1}\}}\,dx.
  \end{align*}  
  For each $k \in \mathbb{Z}$, we decompose $f$ by setting
  \begin{align*}
    f_{0,k}(x) &\coloneq f(x)\indicator_{\{\abs{f(x)} > 2^{k+1}\}}, \\
    f_{1,k}(x) &\coloneq f(x)\indicator_{\{\abs{f(x)} \leq 2^{k+1}\}}.
  \end{align*}
  Since $M^{\mathcal{D}}f \le M^{\mathcal{D}}f_{0,k} + M^{\mathcal{D}}f_{1,k}$, we have the inclusion
  \begin{align*}
    \{M^{\mathcal{D}}f>2^{k+1}\} \subset \{M^{\mathcal{D}}f_{0,k} > 2^{k}\} \cup \{M^{\mathcal{D}}f_{1,k}>2^{k}\}.
  \end{align*}
  From this and  the $\Delta_{2}$ condition, we obtain
  \begin{align*}
    \int_{\RRn}\phi(x,M^{\mathcal{D}}f(x))\,dx \lesssim \sum_{j=0}^{1}\sum_{k\in \mathbb{Z}}\int_{\RRn}\phi(x,2^{k})\indicator_{\{M^{\mathcal{D}}f_{j,k}>2^{k}\}}(x)\,dx.
  \end{align*}
  Due to Lemma \ref{lem:dyadic_dec} for each $k\in \mathbb{Z}$ and $j\in\{0,1\}$, the set $\{M^{\mathcal{D}}f_{j,k}>2^{k}\}$ can be written  as a union of pairwise disjoint maximal dyadic cubes $\{Q_{j,k,l}\} =: \mathcal{Q}(j,k)$ such that for each $j$
  \begin{align}\label{fjk}
    2^{k}<\dashint_{Q_{j,k,l}} |f_{j,k}(x)|\, dx \leq 2^{n+k}.
  \end{align}
  Using this we get
  \begin{align*}
    \int_{\RRn}\phi(x,2^{k})\indicator_{\{M^{\mathcal{D}}f_{j,k}>2^{k}\}}\,dx = \sum_{Q\in \mathcal{Q}(j,k)}|Q|(M_Q\phi)(2^{k})
  \end{align*}
 As shown in the proof of Lemma~\ref{lem:Improved_inside}, if $\phi$ satisfies property~\ref{itm:M_bounded_Muck}, then $\psi_{(s)}$ satisfies the generalized Jensen's inequality for all $s\geq s_0$, for some $s_0\in(0,1)$. Let us choose $s_0$ such that the generalized Jensen's inequality holds for $\psi_{(s_0)}$ and $s_0 \in (1/p, 1)$. Then, letting $s_{1}>1$, we get
  \begin{align*}
    \int_{\RRn}\phi(x,M^{\mathcal{D}}f(x))\,dx
    &\lesssim \sum_{j=0}^{1}\sum_{k\in \mathbb{Z}}\sum_{Q\in \mathcal{Q}(j,k)}|Q|(M_Q\phi)(2^{k}) \\
    & = \sum_{j=0}^{1}\sum_{k\in \mathbb{Z}}\sum_{Q\in \mathcal{Q}(j,k)}|Q|(M_Q\psi_{(s_{j})})(2^{k/s_{j}}).
  \end{align*}
  The definition of $f_{j,k}$ and $1/p <s_{0}<1<s_{1}$ imply that
  \begin{align*}
    0\leq\abs{f_{j,k}}^{1-1/s_{j}}2^{(k+1)(1/s_{j}-1)}\indicator_{\{f_{j,k}\neq0\}}\leq1,
    \quad  \text{for }\ j \in \{0,1\}\ \text{ and } \ k\in \mathbb{Z}.
  \end{align*} 
By the convexity of $\psi_{(s_{j})}$, it follows that
  \begin{align*}
    &\psi_{(s_{j})}(x, \abs{f_{j,k}(x)}2^{k(1/s_{j}-1)})\\
    &\eqsim \psi_{(s_{j})}(x, \abs{f_{j,k}(x)}2^{(k+1)(1/s_{j}-1)})
    = \psi_{(s_{j})}(x, \abs{f_{j,k}(x)}^{1/s_{j}}\abs{f_{j,k}(x)}^{1-1/s_{j}}2^{(k+1)(1/s_{j}-1)}) \\
    &\leq \psi_{(s_{j})}(x, \abs{f_{j,k}(x)}^{1/s_{j}})\abs{f_{j,k}(x)}^{1-1/s_{j}}2^{(k+1)(1/s_{j}-1)}\indicator_{\{f_{j,k}\neq0\}} \\
    &= \phi(x, \abs{f_{j,k}(x)})\abs{f_{j,k}(x)}^{1-1/s_{j}}2^{(k+1)(1/s_{j}-1)}\indicator_{\{f_{j,k}\neq0\}} \le \phi(x, \abs{f_{j,k}(x)}) .
  \end{align*}
  This implies that $\|2^{k(1/s_{j}-1)} |f_{j,k}|\|_{\psi_{(s_{j})}}\lesssim 1$ as $\|f\|_{\phi}\le 1$ . Using \eqref{fjk}, the generalized Jensen's inequality, Theorem \ref{thm:Jensen}, and the properties of the Calder\'on-Zygmund cubes $Q \in \mathcal{Q}(j,k)$, we obtain that for each $j\in\{0,1\}$,
  \begin{align*}
    \sum_{k\in \mathbb{Z}}&\sum_{Q\in \mathcal{Q}(j,k)}|Q|(M_Q\psi_{(s_{j})})(2^{k/s_{j}}) \\
    &\leq \sum_{k\in \mathbb{Z}}\sum_{Q\in \mathcal{Q}(j,k)}|Q|(M_Q\psi_{(s_{j})})\left(2^{k(1/s_{j}-1)}\dashint_Q |f_{j,k}(y)|\,dy\right) \\
    &\lesssim 
    \sum_{k\in \mathbb{Z}}\sum_{Q\in \mathcal{Q}(j,k)}\int_Q \psi_{(s_{j})}(x, 2^{k(1/s_{j}-1)}|f_{j,k}(x)|)\,dx \\
    &\leq \sum_{k\in \mathbb{Z}}\int_{\RRn}\psi_{(s_{j})}(x,2^{k(1/s_{j}-1)}|f_{j,k}(x)|)\,dx \\
    &\lesssim \sum_{k\in \mathbb{Z}} \int_{\RRn}\phi(x,\abs{f_{j,k}(x)})\abs{f_{j,k}(x)}^{1-1/s_{j}}2^{k(1/s_{j}-1)}\indicator_{\{f_{j,k}\neq0\}}\,dx \\
    &= \int_{\RRn}\phi(x,\abs{f(x)})\abs{f(x)}^{1-1/s_{j}} \left(\sum_{k\in\mathbb{Z}} 2^{k(1/s_{j}-1)}\indicator_{\{f_{j,k}=f, f\neq0\}}\right) dx
  \end{align*}
  By the definition of $f_{j,k}$ and the bounds on $s_j$, we can sum up the series in $k$. For $j=0$ with $1/p<s_0<1$:
  \begin{align*}
    \sum_{k=-\infty}^{\infty}2^{k(1/s_{0}-1)}\indicator_{\{f_{0,k}=f, f\neq0\}}=\sum_{k=-\infty}^{\infty}2^{k(1/s_{0}-1)}\indicator_{\{\abs{f}\ge 2^{k+1}\}}\lesssim  \abs{f}^{1/s_{0}-1}.
  \end{align*}
  And for $j=1$ with $s_1>1$:
  \begin{align*}
    \sum_{k=-\infty}^{\infty}2^{k(1/s_{1}-1)}\indicator_{\{f_{1,k}=f, f\neq0\}}=\sum_{k=-\infty}^{\infty}2^{k(1/s_{1}-1)}\indicator_{\{\abs{f} \leq 2^{k+1}\}}\lesssim \abs{f}^{1/s_{1}-1}.
  \end{align*}
  Combining these estimates, we get
  \begin{align*}
    \int_{\RRn}\phi(x,M^{\mathcal{D}}f(x))\,dx 
    &\lesssim \sum_{j=0}^{1} \int_{\RRn}\phi(x,\abs{f(x)})\abs{f(x)}^{1-1/s_{j}} \abs{f(x)}^{1/s_j-1} \,dx \\
    &\le 2\int_{\RRn}\phi(x,\abs{f(x)})\,dx \le 2.
  \end{align*}
  Using \eqref{revested dyadic}, the convexity of $\phi(x,\cdot)$, and \eqref{eq:lower-upper1}, the proof of $\ref{itm:M_bounded_Muck}\Rightarrow \ref{itm:M_bounded_modular}$ is complete. 
	For the implication $\ref{itm:M_bounded_modular}\Rightarrow\ref{itm:M_bounded_norm}$, we use the fact that the norm boundedness of $M$ is equivalent to the following condition: there exists a constant $C>0$ such that for all non-negative $f\in L^{\phi}(\RRn)$,
	\begin{align*}
		\int_{\RRn}\phi(x,\abs{f(x)})\,dx\leq 1 \quad \implies \quad \int_{\RRn}\phi(x,Mf(x))\,dx\leq C.
	\end{align*} Next, we prove $\ref{itm:M_bounded_norm}\Rightarrow \ref{itm:M_bounded_modular_dual}$. If \ref{itm:M_bounded_norm} holds, then $\phi\in\mathcal{A}$.
	By Theorem~\ref{thm:Jensen}, the generalized Jensen's inequality also holds for the conjugate function $\phi^*$.
	The desired conclusion then follows by applying the same argument used to prove $\ref{itm:M_bounded_Muck}\Rightarrow\ref{itm:M_bounded_modular}$.
The implication $\ref{itm:M_bounded_modular_dual}\Rightarrow\ref{itm:M_bounded_norm_dual}$ is proven analogously to $\ref{itm:M_bounded_modular}\Rightarrow\ref{itm:M_bounded_norm}$. For the implication $\ref{itm:M_bounded_norm_dual}\Rightarrow\ref{itm:M_bounded_left_open}$, note that $\ref{itm:M_bounded_norm_dual}$ implies $\phi\in\mathcal{A}$. Then, as shown in the proof of Lemma~\ref{lem:Improved_inside}, $\psi_{(s)}$ satisfies the generalized Jensen's inequality for some $s \in (0,1)$. Since this is the key property required for the boundedness of the maximal function, the argument for $\ref{itm:M_bounded_Muck}\Rightarrow\ref{itm:M_bounded_modular}$ applies directly to $\psi_{(s)}$. Finally, the implication $\ref{itm:M_bounded_left_open}\Rightarrow\ref{itm:M_bounded_Muck}$ is straightforward. Indeed, the boundedness of $M$ on $L^{\psi_{(s)}}(\mathbb{R}^n)$ for some $s \in (0,1)$ implies its boundedness on $L^{\phi}(\mathbb{R}^n)$ (using the classical Jensen's inequality), which in turn yields $\phi \in \mathcal{A}$.
\end{proof}
\subsection{Density of regular functions}
As a direct implication of the boundedness of the maximal function, we present the density of smooth functions in $L^\phi(\Omega)$ and $W^{1,\phi}(\Omega)$.
Let us begin with the following norm convergence result for standard mollifiers.
\begin{theorem}[Mollification]\label{thm:mollification}
 	Let $\phi(x,t) = \frac{1}{p}t^p + \frac{1}{q}a(x)t^q$ be the double phase function with $\phi \in \mathcal{A}$, and let $\psi$ be a standard mollifier. Define $\psi_\varepsilon(x) = \varepsilon^{-n} \psi(x/\varepsilon)$ for $\varepsilon > 0$. Then for all $f \in L^{\phi}(\mathbb{R}^n)$,
 	\begin{align*}
 		f * \psi_\varepsilon \to f \quad \text{a.e. and in } L^{\phi}(\mathbb{R}^n) \quad \text{as } \varepsilon \to 0.
 	\end{align*}
\end{theorem}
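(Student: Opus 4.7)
The plan is to deduce both convergences from the modular boundedness of the maximal operator (Theorem~\ref{thm:M_bounded}) via a dominated convergence argument, which is possible because $\phi$ satisfies the $\Delta_2$-condition. First, since $\phi\in\mathcal{A}$, the H\"older inequality~\eqref{eq:hoelder} yields $\int_K\abs{f}\,dx\leq 2\norm{f}_\phi\norm{\indicator_K}_{\phi^*}<\infty$ for every compact $K\subset\RRn$, so $f\in L^1_{\loc}(\RRn)$. The classical Lebesgue differentiation theorem then gives $\psi_\varepsilon*f(x)\to f(x)$ at every Lebesgue point of $f$, hence a.e.

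For the $L^\phi$-convergence I would use the standard pointwise domination of the convolution by the maximal function. Since $\psi\in C_c^\infty(\RRn)$ is supported in the unit ball with $\norm{\psi}_\infty<\infty$,
\[
\abs{\psi_\varepsilon * f(x)}\leq \norm{\psi}_\infty\,\varepsilon^{-n}\!\int_{B(x,\varepsilon)}\abs{f(y)}\,dy \leq C_\psi\, Mf(x),
\]
and $\abs{f(x)}\leq Mf(x)$ at each Lebesgue point, so $\abs{\psi_\varepsilon*f(x)-f(x)}\leq (C_\psi+1)Mf(x)$ for a.e.\ $x$. By Theorem~\ref{thm:M_bounded} we have $Mf\in L^\phi(\RRn)$, and the unit ball property~\eqref{eq:unit-ball} combined with \eqref{eq:lower-upper1} (applied with $s=\norm{Mf}_\phi$) yields $\phi(\cdot,Mf)\in L^1(\RRn)$. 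A single further application of \eqref{eq:lower-upper1} with $s=C_\psi+1$ gives $\phi(\cdot,(C_\psi+1)Mf)\in L^1(\RRn)$.

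With this integrable majorant in hand I apply the dominated convergence theorem to the sequence $\phi(\cdot,\abs{\psi_\varepsilon*f-f})$, which tends to $0$ pointwise a.e.\ by the continuity of $t\mapsto\phi(x,t)$. Hence $\rho_\phi(\psi_\varepsilon*f-f)\to 0$. To upgrade modular convergence to norm convergence, I invoke the $\Delta_2$-property in the form \eqref{eq:lower-upper1}: for every $\lambda\in(0,1]$,
\[
\rho_\phi\bigl((\psi_\varepsilon*f-f)/\lambda\bigr)\leq \lambda^{-q}\,\rho_\phi(\psi_\varepsilon*f-f)\longrightarrow 0,
\]
so for all sufficiently small $\varepsilon$ the left-hand side is $\leq 1$, meaning $\norm{\psi_\varepsilon*f-f}_\phi\leq\lambda$. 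Since $\lambda>0$ is arbitrary, $\psi_\varepsilon*f\to f$ in $L^\phi(\RRn)$.

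The only genuinely delicate step, which in more general generalized Orlicz settings typically forces one to first establish density of smooth functions, is ensuring $Mf\in L^\phi$; here this is provided for free by Theorem~\ref{thm:M_bounded}. Consequently no additional assumption on the modulating coefficient $a$ is needed, and in particular no (A0)-type hypothesis. After that point the argument is elementary precisely because of the $\Delta_2$-property of the double phase function, which both supplies the integrable majorant and makes the passage from modular to norm convergence automatic.
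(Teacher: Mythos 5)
Your proof is correct and follows essentially the same route as the paper: both dominate the mollifications pointwise by $CMf$, invoke Theorem~\ref{thm:M_bounded} to get $Mf\in L^\phi(\RRn)$, and conclude by dominated convergence. The only difference is that the paper cites a dominated-convergence lemma for $L^\phi$ directly (\cite[Lemma~2.3.16(c)]{DHHR}), whereas you unpack it by applying classical dominated convergence to the modular $\rho_\phi(\psi_\varepsilon*f-f)$ and then upgrading modular convergence to norm convergence via the $\Delta_2$-property from~\eqref{eq:lower-upper1}; both steps are valid.
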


\begin{proof}
 	It is a classical result (see \cite[Theorem 2, p.62]{Stein}) that the convolution is controlled by the maximal operator:
 	\begin{align*}
 		\sup_{\varepsilon > 0} |(f * \psi_\varepsilon)(x)| \leq C M(f)(x) \quad \text{and} \quad \lim_{\varepsilon \to 0} (f * \psi_\varepsilon)(x) = f(x) \quad \text{a.e.}
 	\end{align*}
 	Let $(\varepsilon_n)$ be a sequence of positive numbers such that $\varepsilon_n \to 0$. By setting $f_n \coloneq f * \psi_{\varepsilon_n}$ and $g = C M(f)$, we have $|f_n| \leq g$ a.e. Since $\phi \in \mathcal{A}$, Theorem~\ref{thm:M_bounded} implies $g \in L^\phi(\mathbb{R}^n)$. Consequently, we can apply \cite[Lemma 2.3.16(c)]{DHHR} to the sequence $f_n \to f$, which yields the desired convergence in norm.
\end{proof}

Using Theorem~\ref{thm:mollification}, we obtain the following density result for the Sobolev space. We note that, unlike similar results in the general setting (cf. \cite{HastoHarjulehto2019}), we do not require the assumption (A0) here.

\begin{theorem}\label{thm:density_interior}
 	Let $\phi(x,t) = \frac{1}{p}t^p + \frac{1}{q}a(x)t^q$ and $\phi \in \mathcal{A}$. Then the following holds:
  \begin{enumerate}
  \item $C^\infty(\Omega) \cap W^{1,\phi}(\Omega)$ is dense in $W^{1,\phi}(\Omega)$ and $C^\infty_0(\Omega)$ is dense in $W^{1,\phi}_0(\Omega)$.
  \item If additionally $\Omega$ is a bounded Lipschitz domain, then $C^\infty(\overline{\Omega})$ is dense in $W^{1,\phi}(\Omega)$.
  \end{enumerate}
\end{theorem}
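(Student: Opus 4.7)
\medskip

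The plan is a Meyers--Serrin type construction, using Theorem~\ref{thm:mollification} in place of the $L^p$-convergence of mollification that drives the classical proof; beyond this input the argument is standard partition-of-unity bookkeeping, together with a boundary shift for part~(b).

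For part~(a), I would fix an exhaustion $\Omega_k \Subset \Omega_{k+1} \Subset \Omega$ with $\bigcup_k \Omega_k = \Omega$ and a locally finite partition of unity $\{\eta_k\} \subset C^\infty_0$ subordinate to the annular cover $\{\Omega_{k+1}\setminus\overline{\Omega_{k-1}}\}$ (with $\Omega_{-1}:=\Omega_0:=\emptyset$), so that $\sum_k \eta_k\equiv 1$ on $\Omega$. For each $k$, $\eta_k u$ is compactly supported in $\Omega$ with $\nabla(\eta_k u) = \eta_k\nabla u + u\nabla\eta_k \in L^\phi(\RRn)$; applying Theorem~\ref{thm:mollification} to both $\eta_k u$ and its gradient, I would pick $\varepsilon_k>0$ so small that $\operatorname{supp}((\eta_k u) * \psi_{\varepsilon_k}) \Subset \Omega$ and $\|(\eta_k u)*\psi_{\varepsilon_k} - \eta_k u\|_{W^{1,\phi}}<\delta\, 2^{-k}$. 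The locally finite sum $v:=\sum_k (\eta_k u)*\psi_{\varepsilon_k}$ then lies in $C^\infty(\Omega)\cap W^{1,\phi}(\Omega)$ with $\|v - u\|_{W^{1,\phi}(\Omega)}<\delta$. Density of $C^\infty_0(\Omega)$ in $W^{1,\phi}_0(\Omega)$ I would deduce from the characterization $\tilde u \in W^{1,\phi}(\RRn)$: first truncate $\tilde u$ by a cutoff $\chi_k\in C^\infty_0(\Omega)$ equal to~$1$ on $\Omega_k$, the non-trivial convergence $\tilde u\nabla\chi_k\to 0$ in $L^\phi(\RRn)$ being secured because $\tilde u$ vanishes outside $\Omega$ and the boundary-collar contribution can be made small by choosing $\chi_k$ along shells with controlled thickness and using dominated convergence in $L^\phi$ (available thanks to $\phi\in\Delta_2$); then mollify $\chi_k\tilde u$ at a scale smaller than $\operatorname{dist}(\operatorname{supp}\chi_k,\partial\Omega)$ via Theorem~\ref{thm:mollification}.

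For part~(b) I would combine part~(a) with a local boundary construction based on the interior cone property of a bounded Lipschitz domain. Cover $\partial\Omega$ by finitely many balls $B_j$, each equipped with an inward direction $e_j\in\mathbb{S}^{n-1}$ and $\tau_j>0$ such that $(\overline\Omega\cap B_j) + te_j \Subset \Omega$ for $t\in(0,\tau_j]$. On each $B_j$ I would form the inward shift $u_t(x):=u(x+te_j)$, which is defined on a neighborhood of $\overline{\Omega\cap B_j}$, and convolve with $\psi_\varepsilon$ for $\varepsilon\ll t$ to obtain a function smooth on $\overline{\Omega\cap B_j}$. A partition of unity subordinate to $\{B_j\}$ plus one interior piece from part~(a) then assembles these into an element of $C^\infty(\overline\Omega)$ close to $u$ in $W^{1,\phi}(\Omega)$. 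The main obstacle is translation continuity of the $L^\phi$-norm: since $\phi(x,t)=\tfrac1p t^p+\tfrac1q a(x)t^q$ depends on $x$ through the rough coefficient $a$, the identity $\|f(\cdot-h)\|_\phi = \|f\|_\phi$ fails in general. I would establish local translation continuity $\|u_t - u\|_{W^{1,\phi}(\Omega\cap B_j)}\to 0$ by first reducing to smooth, compactly supported $u$ via part~(a), and then observing that for such $u$ the translated modular $\int a(x+te_j)|u(x)|^q\,dx$ converges to $\int a(x)|u(x)|^q\,dx$ by dominated convergence and the $L^1_{\loc}$-continuity of translations applied to $a\in L^1_{\loc}(\RRn)$; combined with the $\Delta_2$-condition for $\phi$ and $\phi^*$ this upgrades to norm convergence of the shift and closes the boundary argument.
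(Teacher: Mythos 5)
Your Meyers--Serrin construction for the first half of part~(a) is sound and is exactly the standard route: Theorem~\ref{thm:mollification} supplies the local mollification step, and the partition-of-unity bookkeeping is unproblematic. The paper does not spell out a proof, referring instead to \cite[Theorems~8.5.2, 9.1.7, 9.1.8, Prop.~11.2.3]{DHHR}, but this piece agrees in spirit with that reference.

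The boundary argument in part~(b), however, contains a genuine gap. To pass from ``translation continuity holds for smooth compactly supported $u$'' to ``translation continuity holds for all $u\in W^{1,\phi}(\Omega\cap B_j)$'', you implicitly need the shifts $T_t u := u(\cdot+te_j)$ to be \emph{uniformly bounded} on $L^\phi(\Omega\cap B_j)$ for small $t$, so that the error $\|(u-v)_t\|_\phi$ in the three-term triangle inequality can be controlled by $\|u-v\|_\phi$. Nothing in your argument supplies this, and it is exactly the point where $x$-dependent $N$-functions differ from $L^p$: translation operators are in general \emph{not} bounded on weighted $L^p$ (already $w(x)=|x|^\alpha$ fails) nor on variable-exponent spaces, so a density reduction of the type you propose does not go through. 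The standard fix, and the one the paper implicitly invokes via \cite{DHHR} and the phrase ``boundedness of the maximal operator controls mollification'', is to never separate the inward shift from the mollification. One uses the shifted mollifier $(S_\varepsilon u)(x):=\int u\bigl(x+\varepsilon e_j-\varepsilon y\bigr)\psi(y)\,dy$, which is smooth up to $\overline{\Omega\cap B_j}$ since the averaging ball sits inside $\Omega$, and which satisfies the pointwise bound $|S_\varepsilon u(x)|\le c\,Mu(x)$ because the shift is of the same order as the mollification radius. Dominated convergence in $L^\phi$ (valid under $\Delta_2$, with majorant $cMu\in L^\phi$ by Theorem~\ref{thm:M_bounded}) then gives $S_\varepsilon u\to u$ in $W^{1,\phi}$ with no appeal to translation continuity at all. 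Your proposal effectively reinvents the translation step but misses the maximal-function domination that makes the limit legitimate.

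A smaller but real under-justification occurs in the cutoff step for the density of $C^\infty_0(\Omega)$ in $W^{1,\phi}_0(\Omega)$. When $\Omega$ is bounded, the shells $\Omega_{k+1}\setminus\overline{\Omega_k}$ necessarily shrink, so $|\nabla\chi_k|$ blows up like $\operatorname{dist}(\cdot,\partial\Omega)^{-1}$, and $\tilde u\nabla\chi_k$ has no obvious $L^\phi$-majorant. ``Dominated convergence along shells of controlled thickness'' is therefore not a proof; one needs either a Hardy-type inequality for $W^{1,\phi}_0$ or, as in \cite{DHHR}, a more structural argument that bypasses the naive cutoff (this is where the Rubio de Francia extrapolation mentioned in the paper enters). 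You should either establish the required majorant or switch to the \cite{DHHR} line of argument.
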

We omit the proof, since it follows exactly the same lines as \cite[Theorems 8.5.2, 9.1.7 and 9.1.8 and Proposition 11.2.3]{DHHR}, utilizing the boundedness of the maximal operator, which controls mollification and allows one to use the extrapolation technique of Rubio de Francia.

\subsection{Sobolev-\Poincare inequality}

For $k\in \mathbb{Z}$ we define the averaging operator $T_k$ over dyadic cubes by 
\begin{align*}
  T_kf:=
  \sum_{\substack{Q \text{ is dyadic}\\ \ell_Q=2^{-k}}} \indicator_Q \dashint_{2Q}\abs{f}dy.
\end{align*}
where $\ell_Q$ is the sidelength of the cube $Q$. Recall that a dyadic cube in $\mathbb{R}^n$ is the set
\[
	[2^k m_1, 2^k(m_1 + 1)) \times \cdots \times [2^k m_n, 2^k(m_n + 1)),
\]
where $k, m_1, \dots, m_n \in \mathbb{Z}$.

\begin{theorem}[Sobolev-\Poincare]\label{thm:sobolev-poincare}
  Let $\phi \in \mathcal{A}$ with $\phi(x,t)=\tfrac 1p t^p+ \tfrac 1q a(x)t^q$. There exist constants $s>1$ and $c > 0$  such that for any $u \in W^{1,1}(B)$ with $\int_{B}\phi(x, |\nabla u|)dx \leq 1$ we have
  \begin{align}\label{eq:Sobolev}
  \dashint_B \phi\left(x, \frac{|u - \mean{u}_{B}|}{r_B}\right) dx
  \le  \left(\dashint_B \phi^s\left(x, \frac{|u - \mean{u}_{B}|}{r_B}\right) dx\right)^{\frac{1}{s}}
    &\leq C
    \dashint_{B} \phi(x, |\nabla u|) \, dx.
  \end{align}
  Moreover, if $u\in W^{1,1}_0(B)$, then we have
    \begin{align}\label{eq:Sobolev1}
     \dashint_B \phi\left(x, \frac{|u|}{r_B}\right) dx \le \left(\dashint_B \phi^s\left(x, \frac{|u|}{r_B}\right) dx\right)^{\frac{1}{s}}
      &\leq C
      \dashint_{B} \phi(x, |\nabla u|) \, dx.
    \end{align}
  
\end{theorem}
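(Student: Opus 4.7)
The plan is to combine a pointwise Poincar\'e estimate with the modular boundedness of the maximal operator (Theorem~\ref{thm:M_bounded}) and the higher-integrability machinery of Section~4. The argument splits into three steps: establishing the pointwise bound, extracting the basic $L^1$-modular estimate (which immediately yields the trivial left-most inequality), and then bootstrapping to the $L^s$-version via the left-openness of the class $\mathcal{A}$.

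\textbf{Step 1 (Pointwise Poincar\'e).} For $u \in W^{1,1}(B)$ and a.e.\ $x \in B$, I start from the classical Riesz-potential representation
\begin{equation*}
  \abs{u(x) - \mean{u}_B} \lesssim \int_B \frac{\abs{\nabla u(y)}}{\abs{x-y}^{n-1}}\,dy.
\end{equation*}
Splitting the integral into dyadic annuli $A_k(x) = \{y : 2^{-k-1}r_B \le \abs{x-y} < 2^{-k}r_B\}$ and bounding each piece by the average of $\abs{\nabla u}$ over $B(x, 2^{-k}r_B)$ gives, after summing the geometric series,
\begin{equation*}
  \frac{\abs{u(x) - \mean{u}_B}}{r_B} \lesssim M(\indicator_B\abs{\nabla u})(x), \qquad x \in B.
\end{equation*}
For $u \in W^{1,1}_0(B)$ the zero extension gives the analogous bound with $\mean{u}_B$ replaced by $0$.

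\textbf{Step 2 (Basic modular inequality; left-most inequality).} Applying $\phi(x,\cdot)$ to Step~1 and absorbing the multiplicative constant via the $\Delta_2$-condition from \eqref{eq:lower-upper1}, the hypothesis $\int_B \phi(x, \abs{\nabla u})\,dx \le 1$ yields $\norm{\indicator_B\abs{\nabla u}}_\phi \le 1$, so that Theorem~\ref{thm:M_bounded}\ref{itm:M_bounded_modular} (modular boundedness of $M$ on $L^\phi$) can be invoked. This produces
\begin{equation*}
  \dashint_B \phi\Big(x, \frac{\abs{u - \mean{u}_B}}{r_B}\Big)\,dx \lesssim \dashint_B \phi(x, M(\indicator_B\abs{\nabla u}))\,dx \lesssim \dashint_B \phi(x, \abs{\nabla u})\,dx.
\end{equation*}
The left-most inequality in \eqref{eq:Sobolev} is then just Jensen's inequality for averages with exponent $s > 1$.

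\textbf{Step 3 ($L^s$-upgrade; right-hand inequality).} For the harder right-hand inequality I exploit the left-openness of $\mathcal{A}$: by Theorem~\ref{thm:M_bounded}\ref{itm:M_bounded_left_open} (whose proof already appears inside Lemma~\ref{lem:Improved_inside}) there is $s_0 \in (0,1)$ such that $\psi_{(s_0)}(x,t) = \phi(x, t^{s_0})$ still lies in $\mathcal A$ and $M$ is modularly bounded on $L^{\psi_{(s_0)}}$. Setting $s := 1/s_0 > 1$, I raise the pointwise bound of Step~1 to the $s$-th power and use Jensen's inequality (applied to $t \mapsto t^s$, convex since $s > 1$) to obtain $(M\indicator_B\abs{\nabla u})^s \le M(\indicator_B\abs{\nabla u}^s)$ pointwise. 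Then the identity $\psi_{(s_0)}(x, t^s) = \phi(x, t^{s s_0}) = \phi(x, t)$ lets me interpret the $\psi_{(s_0)}$-modular bound for $M(\abs{\nabla u}^s)$ as an $L^s$-type control of $\phi(\cdot, \abs{u - \mean{u}_B}/r_B)$. After normalizing $\abs{\nabla u}^s$ to enforce the unit-ball condition $\norm{\,\cdot\,}_{\psi_{(s_0)}} \le 1$ and reabsorbing the resulting scaling constants via $\Delta_2$, the estimate
\begin{equation*}
  \Big(\dashint_B \phi^s\Big(x, \frac{\abs{u - \mean{u}_B}}{r_B}\Big)\,dx\Big)^{1/s} \le C \dashint_B \phi(x, \abs{\nabla u})\,dx
\end{equation*}
follows. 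The Dirichlet case \eqref{eq:Sobolev1} is handled identically using the zero-extension pointwise bound from Step~1.

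\textbf{Main obstacle.} The hard step is Step~3: the lack of homogeneity of $\phi$ means $\phi^s(x, t) \neq \phi(x, t^s)$ in general, so a naive rescaling cannot convert the basic $L^1$-modular bound into an $L^s$-one. The left-openness of $\mathcal A$ — equivalently, the improved Jensen inequalities of Corollary~\ref{lem:highJensen} and Lemma~\ref{lem:Improved_inside} — is precisely the self-improvement buffer that closes this gap. Most of the technical care in Step~3 goes into managing the interplay between $\phi^s$ and $\psi_{(s_0)}$ and into enforcing the normalization hypothesis required for the modular boundedness of $M$ on $L^{\psi_{(s_0)}}$.
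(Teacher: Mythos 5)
Your Steps~1 and~2 are correct and give the easy part of the statement: the left-most inequality is classical Jensen, and the pointwise bound $\frac{\abs{u-\mean{u}_B}}{r_B}\lesssim M(\indicator_B\abs{\nabla u})$ combined with the modular boundedness of $M$ does yield
\begin{align*}
\dashint_B\phi\Big(x,\frac{\abs{u-\mean{u}_B}}{r_B}\Big)\,dx\lesssim\dashint_B\phi(x,\abs{\nabla u})\,dx,
\end{align*}
i.e.\ the outer inequality of \eqref{eq:Sobolev}. This is a legitimate alternative to the paper's dyadic expansion for that weak version. However, Step~3 does not close the gap, and the reason is structural.

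The theorem requires a \emph{reverse H\"older gain for $\phi$ itself}: the quantity to control is $\big(\dashint_B\phi^s(x,\cdot)\,dx\big)^{1/s}$ with $\phi$ raised to the power $s$. You attempt to reach this via the left-openness of $\mathcal{A}$, i.e.\ via $\psi_{(s_0)}(x,t)=\phi(x,t^{s_0})$ and the identity $\psi_{(s_0)}(x,t^s)=\phi(x,t)$ with $s=1/s_0$. But that identity moves the exponent onto the \emph{argument} $t$, not onto $\phi$; running the chain $\big(\tfrac{\abs{u-\mean{u}_B}}{r_B}\big)^s\lesssim M(\indicator_B\abs{\nabla u}^s)$ together with modular boundedness of $M$ on $L^{\psi_{(s_0)}}$ gives $\int_B\psi_{(s_0)}(x,M(\indicator_B\abs{\nabla u}^s))\,dx\lesssim\int_B\psi_{(s_0)}(x,\abs{\nabla u}^s)\,dx=\int_B\phi(x,\abs{\nabla u})\,dx$, and after the pointwise comparison this collapses back to the $L^1$-modular bound of Step~2. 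There is no mechanism in this argument that produces $\phi^s(x,t)=\big(\tfrac1p t^p+\tfrac1q a(x)t^q\big)^s$, which is an entirely different object from $\phi(x,t^{\alpha})$ for any $\alpha$. In short, left-openness (Lemma~\ref{lem:Improved_inside}) and the reverse-H\"older self-improvement (Theorem~\ref{thm:A_loc_with_t}/Corollary~\ref{lem:highJensen}) are orthogonal tools, and you need the latter here, not the former.

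The paper's proof handles exactly this obstacle by avoiding the maximal function altogether. It keeps the Riesz-potential representation in the form $\abs{u-\mean{u}_B}\lesssim r_B\sum_{k\ge k_0}2^{-(k-k_0)}T_{k}(\indicator_B\abs{\nabla u})$, where $T_k$ is piecewise \emph{constant} on dyadic cubes of sidelength $2^{-k}$. This makes it possible to apply the improved generalized Jensen inequality (Corollary~\ref{lem:highJensen}) on each dyadic cube, since the reverse-H\"older gain from Theorem~\ref{thm:A_loc_with_t} requires the argument of $\phi$ to be a constant (adapted to that cube). The convexity of $\phi$ and the $\ell^{1/s}$-subadditivity then let one sum, and the geometric series converges precisely because $s<\tfrac{n}{n-1}$ (the factor $(\abs{2Q}/\abs{B})^{1/s-1}$ contributes $2^{(k-k_0)n(1-1/s)}$). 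Your pointwise bound $\frac{\abs{u-\mean{u}_B}}{r_B}\lesssim M(\indicator_B\abs{\nabla u})(x)$ destroys this piecewise-constant structure, so the $A_\infty$-type reverse H\"older for $\phi(\cdot,t)$ with fixed $t$ can no longer be applied. You would need a genuinely new ingredient — a reverse H\"older inequality for $\phi(\cdot,M(\indicator_B\abs{\nabla u})(\cdot))$ as a weight, which the paper neither provides nor needs.
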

\begin{proof}
  Note that the first inequality follows directly from the classical Jensen's inequality since $s>1$. Thus, it suffices to prove the second estimate. To this end, we use the standard Riesz potential estimate: 
  \begin{align}
    \label{eq:poincare-riesz}
    |u(x) - \langle u \rangle_B| \lesssim \int_B \frac{|\nabla u(y)|}{|x - y|^{n-1}} \,dy \lesssim r_B \sum_{k=0}^{\infty} 2^{-k} T_{k+k_0} (\indicator_B|\nabla u|)(x)
  \end{align}
  for all $x\in B$,
  where $k_0\in\mathbb{Z}$ is such that $2^{-k_0-1}\leq r_B\leq2^{-k_0}$. Using this we get that 
  \begin{align*}
    |u(x)-\mean{u}_B| \lesssim r_B
    \sum_{k=k_0}^{\infty}2^{-k+k_0} \sum_{\substack{Q \, \text{dyadic} \\ \ell(Q)=2^{-k}}} \indicator_{Q}(x) \dashint_{2Q} \indicator_{ B}|\nabla u|\,dy.
  \end{align*}
  By the convexity, Corollary~\ref{lem:highJensen} and the $\Delta_2$ condition of $\phi(x, \cdot)$, we have that:
  \begin{align*}
    \lefteqn{\Bigg(\dashint_B \phi^s\left(x, \frac{|u-\mean{u}_B|}{r_B}\right) dx\Bigg)^{\frac{1}{s}}}\qquad&
    \\
    &\lesssim \Bigg(\dashint_B \phi^{s}\Bigg(x,\sum_{k=k_0}^{\infty}2^{-(k-k_0)}
    \sum_{\substack{Q \,\text{dyadic}\\ \ell_Q=2^{-k}}} \indicator_{Q}(x) \dashint_{2Q} \indicator_{B}|\nabla u|\,dy\Bigg)\Bigg)^{\frac{1}{s}}
    \\
    &\leq \sum_{k=k_0}^\infty 2^{-(k-k_0)} \Bigg(\dashint_B \phi^s\bigg(x, \sum_{\substack{Q \,\text{dyadic}\\ \ell_Q=2^{-k}}}\indicator_{Q}(x) \dashint_{2Q} \indicator_{B}|\nabla u|\,dy\bigg)dx \Bigg)^{\frac{1}{s}}
    \\
    &\leq \sum_{k=k_0}^\infty 2^{-(k-k_0)} \sum_{\substack{Q \,\text{dyadic}\\ \ell_Q=2^{-k}}} \Bigg(\frac{|2Q|}{|B|}\Bigg)^{\frac{1}{s}}\Bigg(\dashint_{2Q} \phi^s \left(x,\dashint_{2Q} \indicator_{B}|\nabla u|\,dy\right) dx\Bigg)^{\frac{1}{s}}
    \\
    &\lesssim \sum_{k=k_0}^\infty 2^{-(k-k_0)}\Bigg(\frac{|2Q|}{|B|}\Bigg)^{\frac{1}{s}} \sum_{\substack{Q \,\text{dyadic}\\ \ell_Q=2^{-k}}} \Bigg(\dashint_{2Q} \phi \left(x,\indicator_{B}|\nabla u|\right) dx\Bigg)
    \\
    &\le\sum_{k=k_0}^\infty 2^{-(k-k_0)}\Bigg(\frac{|2Q|}{|B|}\Bigg)^{\frac{1}{s}-1}\dashint_{B}\phi(x,|\nabla u|)\,dx
    \\
    &\leq\sum_{k=k_0}^\infty 2^{-(k-k_0)\frac{n-ns+s}{s}}\dashint_{B}\phi(x,|\nabla u|)\,dx
    \\
    &\lesssim\dashint_{B}\phi(x,|\nabla u|)\,dx.
  \end{align*}
  This implies \eqref{eq:Sobolev}. Moreover, if $u\in W^{1,1}_0(B)$, it follows that
  \begin{align*}
    |u(x)| \lesssim \int_B \frac{|\nabla u(y)|}{|x - y|^{n-1}} \,dy.
  \end{align*}
  Therefore, we obtain \eqref{eq:Sobolev1} in the same way.
\end{proof}
\begin{remark}
  We stated Theorem~\ref{thm:sobolev-poincare} for balls. However, it is true with~$B$ replaced by an $\alpha$-John domain~$\Omega$ with an additional dependence of the constant on~$\alpha$. The important step is that the estimate~\eqref{eq:poincare-riesz} involving the Riesz potential of $\indicator_B \abs{\nabla u}$ also holds on John domains, see e.g. \cite[Lemma~8.2.1]{DHHR} where the definition of an $\alpha$-John domain can also be found. Bounded Lipschitz domains are for example $\alpha$-John domains. The other steps in the proof remain the same. This is in fact very similar to \cite[Theorem~8.2.4]{DHHR}, which inspired our proof of Theorem~\ref{thm:sobolev-poincare}. Note that Corollary~\ref{cor:sobolev-poincare-zeroset} below also holds for $\alpha$-John domains.
\end{remark}
\begin{remark}
	Note that to guarantee the convergence of the series above, we require that $n-ns+s>0$, which implies $s<\tfrac{n}{n-1}$. 
\end{remark}

The same argument as in \cite[Remark 8.2.10]{DHHR} gives us the following result.
\begin{corollary}[Sobolev-\Poincare for functions with a zero set]\label{cor:sobolev-poincare-zeroset}
  Let $\phi \in \mathcal{A}$ with $\phi(x,t)=\tfrac 1p t^p+ \tfrac 1q a(x)t^q$. Under the same assumptions as in Theorem~\ref{thm:sobolev-poincare}, suppose that $u \in W^{1,1}(B)$ vanishes on a set $E \subset B$ 
 of positive measure. 
Then there exists a constant $C>0$, depending on $n$, $p$, $q$ and $[\phi]_{\mathcal{A}}$, 
such that
  \begin{align}\label{eq:Sobolev_no_mean}
    \bigg(\dashint_B \phi^s\bigg(x, \frac{|u|}{r_B}\bigg) \,dx\bigg)^{\frac{1}{s}}
    &\leq C \Big(1+\tfrac{|B|}{|E|}\Big)^q
    \dashint_{B} \phi(x, |\nabla u|) \, dx.
  \end{align}
\end{corollary}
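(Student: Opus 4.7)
The approach mirrors the classical Sobolev--Poincar\'e argument with a zero set (cf.\ \cite[Remark~8.2.10]{DHHR}), adapted to the modular setting via the improved generalized Jensen inequality. The key observation is that vanishing of $u$ on $E$ allows the mean $\mean{u}_B$ to be controlled by the oscillation of $u$, with a loss factor $|B|/|E|$; this factor will ultimately produce the prefactor $(1+|B|/|E|)^q$ in the estimate.

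First I would bound the mean: since $u=0$ a.e.\ on $E$,
\[
|\mean{u}_B| = \dashint_E |\mean{u}_B - u(y)|\,dy \leq \tfrac{|B|}{|E|}\dashint_B |u-\mean{u}_B|\,dz.
\]
Next, the triangle inequality gives $|u(x)| \leq |u(x)-\mean{u}_B| + |\mean{u}_B|$, and convexity together with the $\Delta_2$-condition yields
\[
\phi^s\!\left(x,\tfrac{|u|}{r_B}\right) \lesssim \phi^s\!\left(x,\tfrac{|u-\mean{u}_B|}{r_B}\right) + \phi^s\!\left(x,\tfrac{|\mean{u}_B|}{r_B}\right).
\]
For the first term, averaging over $B$, taking the $s$-th root, and applying Theorem~\ref{thm:sobolev-poincare} gives exactly the desired bound by $C\dashint_B\phi(x,|\nabla u|)\,dx$.

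The main work lies in the second term. Combining the mean estimate with $\phi(x,\lambda t)\leq\lambda^q\phi(x,t)$ for $\lambda\geq 1$ (from~\eqref{eq:lower-upper1}), applied with $\lambda=|B|/|E|$, one obtains the pointwise bound
\[
\phi^s\!\left(x,\tfrac{|\mean{u}_B|}{r_B}\right) \leq \bigl(\tfrac{|B|}{|E|}\bigr)^{qs} \phi^s\!\left(x,\dashint_B \tfrac{|u-\mean{u}_B|}{r_B}\,dy\right).
\]
Averaging in $x$, taking the $s$-th root, and applying the improved generalized Jensen inequality (Corollary~\ref{lem:highJensen}) to $f=|u-\mean{u}_B|/r_B$ bounds this by $C(|B|/|E|)^q \dashint_B \phi(x, |u-\mean{u}_B|/r_B)\,dx$, which is in turn controlled by $\dashint_B\phi(x,|\nabla u|)\,dx$ via Theorem~\ref{thm:sobolev-poincare}.

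The main obstacle is verifying the norm constraint $\|f\|_\phi\leq 1$ required by Corollary~\ref{lem:highJensen}. The standing hypothesis $\int_B\phi(x,|\nabla u|)\,dx\leq 1$ combined with Sobolev--Poincar\'e only guarantees $\int_B\phi(x,|u-\mean{u}_B|/r_B)\,dx\leq C$. However, replacing $u$ by $u/\lambda$ with $\lambda:=C^{1/p}\geq 1$ preserves the hypothesis and the vanishing on $E$, while reducing the modular of $f$ below $1$; the $\Delta_2$-condition then absorbs the rescaling into the overall constant, yielding the claimed prefactor $(1+|B|/|E|)^q$.
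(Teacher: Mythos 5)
Your proof is correct and follows essentially the same route the paper indicates (the classical decomposition $|u| \leq |u - \mean{u}_B| + |\mean{u}_B|$ with $|\mean{u}_B| \leq \frac{|B|}{|E|}\dashint_B|u-\mean{u}_B|$, as in \cite[Remark~8.2.10]{DHHR}, followed by Jensen and Sobolev--Poincar\'e). The rescaling $u \mapsto u/\lambda$ with $\lambda = C^{1/p}$ that you add to secure the constraint $\norm{f}_\phi \leq 1$ before invoking Corollary~\ref{lem:highJensen} is a genuine point that the paper's citation glosses over, and you have carried it out correctly.
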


\section{Regularity results}
\label{sec:regularity-results}

In this section, we explore regularity results for minimizers of the double phase functional \eqref{eq:doublephase-energy-intro}, or, equivalently, weak solutions to the double phase equation \eqref{eq:pde}, under the Muckenhoupt condition $\phi \in \mathcal{A}$. In this section, $\phi(x,t)$ is always the double phase function, i.e.
\begin{align}
  \phi(x,t)=\tfrac{1}{p}t^p + \tfrac{1}{q}a(x)t^q.
\end{align}
A function $u\in W^{1,\phi}(\Omega)$ is called a weak solution of 
\begin{equation}\label{eq:PDE_full}
  -\mathrm{div}\left(\frac{\phi'(x,|\nabla u|)}{|\nabla u|} \nabla u \right) = 0
\end{equation}
if
\begin{align}\label{eq:weak_form_full}
  \int_{\Omega}  \frac{\phi'(x,|\nabla u|)}{|\nabla u|} \nabla u  \cdot \nabla \psi \, dx=  \int_{\Omega} \left( |\nabla u|^{p-2}\nabla u + a(x)|\nabla u|^{q-2}\nabla u \right) \cdot \nabla \psi \, dx = 0
\end{align}
for all $\psi \in W_0^{1,\phi}(\Omega)$. Moreover, a function $v\in W^{1,\phi}(\Omega)$ is a weak subsolution (resp. supersolution) of \eqref{eq:PDE_full} if
\begin{align}\label{eq:weak_form_full_subsup}
  \int_{\Omega} \left( |\nabla v|^{p-2}\nabla v + a(x)|\nabla v|^{q-2}\nabla v \right) \cdot \nabla \psi \, dx \le 0 \ (\text{resp.}\ \ge 0) 
\end{align}
for all $\psi \in W_0^{1,\phi}(\Omega)$ with $\psi\ge0$ a.e. in $\Omega$.

\subsection{Caccioppoli inequality and local boundedness}

For any $\lambda\in \RR$, we define the positive part of $u$ above $\lambda$ as 
\begin{align*}
	u_\lambda(x) \coloneqq (u(x)-\lambda)_+  \coloneqq \max\{u(x)-\lambda, 0\},
\end{align*} 
and when $\lambda=0$, we write $u_+ \coloneqq u_0$.
Note that $\nabla u_\lambda = \nabla u$ on the set where $u>\lambda$ and $\nabla u_\lambda = 0$ where $u \le \lambda$.
We begin with a \Caccioppoli~type estimate.

\begin{lemma}[Caccioppoli inequality]\label{lem:Caccio}
	Let $B_r$ and  $B_R$ with $r<R$ be concentric balls such that $B_{R}\subset\Omega$.   If $u \in W^{1,\phi}(\Omega)$ is a weak subsolution of \eqref{eq:PDE_full}, then  we have that for any $\lambda \in\RR$,
	\begin{align*}
		\int_{B_r} \phi(x, \abs{\nabla u_\lambda}) \,dx\leq c \int_{B_R} \phi \left(x, \frac{u_\lambda}{R-r}\right) \, dx ,
	\end{align*}
	where $c>0$ depends only on the dimension $n$, and the exponents $p$ and $q$. 	
\end{lemma}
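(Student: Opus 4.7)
The plan is to prove this via the standard Caccioppoli testing strategy, adapted to the non-homogeneous double phase structure. I would use the admissible test function $\psi = \eta^q u_\lambda$, where $\eta$ is a Lipschitz cut-off satisfying $\eta \equiv 1$ on $B_r$, $\operatorname{supp} \eta \subset B_R$, $0 \le \eta \le 1$, and $|\nabla \eta| \le 2/(R-r)$. Since $u_\lambda \ge 0$ and $u_\lambda \in W^{1,\phi}$, we have $\psi \in W^{1,\phi}_0(\Omega)$ and $\psi \ge 0$, so it is admissible in the subsolution inequality \eqref{eq:weak_form_full_subsup}. Using the fundamental fact that $\nabla u = \nabla u_\lambda$ on $\{u > \lambda\}$ while $u_\lambda = 0$ and $\nabla u_\lambda = 0$ on $\{u \le \lambda\}$, the product rule gives
\begin{align*}
\int_\Omega \eta^q \bigl(|\nabla u_\lambda|^p + a(x) |\nabla u_\lambda|^q\bigr)\, dx
&\le -q \int_\Omega \eta^{q-1} u_\lambda \bigl(|\nabla u_\lambda|^{p-2}\nabla u_\lambda + a(x)|\nabla u_\lambda|^{q-2}\nabla u_\lambda\bigr) \cdot \nabla \eta \, dx.
\end{align*}
By \eqref{eq:index-sim} the left side is comparable to $\int_\Omega \eta^q \phi(x, |\nabla u_\lambda|)\, dx$.

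Next, I would bound the right side in absolute value by $q \int_\Omega \eta^{q-1} |\nabla \eta| u_\lambda\, \phi'(x, |\nabla u_\lambda|)\, dx$ and split it according to the two terms in $\phi'$. On the $p$-part apply Young's inequality with exponents $(p,p')$, and on the $q$-part with exponents $(q,q')$:
\begin{align*}
\eta^{q-1} |\nabla \eta| u_\lambda |\nabla u_\lambda|^{p-1} &\le \varepsilon\, \eta^{(q-1)p'} |\nabla u_\lambda|^p + C_\varepsilon (u_\lambda |\nabla \eta|)^p,\\
\eta^{q-1} |\nabla \eta| u_\lambda\, a(x) |\nabla u_\lambda|^{q-1} &\le \varepsilon\, \eta^{(q-1)q'} a(x) |\nabla u_\lambda|^q + C_\varepsilon\, a(x) (u_\lambda |\nabla \eta|)^q.
\end{align*}
The crucial observation that makes the cut-off absorption work is that $q \ge p$ implies $(q-1)p' \ge q$, while $(q-1)q' = q$ exactly. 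Since $\eta \le 1$, this yields $\eta^{(q-1)p'} \le \eta^q$ and $\eta^{(q-1)q'} = \eta^q$, so both terms on the gradient side are controlled by $\varepsilon \eta^q \phi(x,|\nabla u_\lambda|)$ up to multiplicative constants depending only on $p,q$.

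Combining these estimates, choosing $\varepsilon$ small enough to absorb the $\varepsilon$-term into the left side, and using $|\nabla \eta| \le 2/(R-r)$ together with $\eta \equiv 1$ on $B_r$ delivers the claim. The main technical issue is precisely the one resolved above, namely coordinating the exponent of the cut-off with the two different growth regimes of $\phi$; choosing the cut-off to the power $q$ (rather than $p$) is essential, and the algebraic identity $(q-1)q'=q$ together with the inequality $(q-1)p' \ge q$ is what makes the classical argument go through in the non-homogeneous setting. No Muckenhoupt or Jensen-type input is needed at this stage; the proof is purely pointwise and depends only on $n$, $p$, and $q$.
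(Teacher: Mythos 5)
Your proposal is correct and follows essentially the same route as the paper's proof: test with $\psi = \eta^q u_\lambda$, split the right-hand side by $\phi'$-terms, apply Young's inequality with conjugate exponents $(p,p')$ and $(q,q')$ respectively, and absorb. The only added value in your write-up is the explicit exponent bookkeeping showing that $(q-1)q' = q$ and $(q-1)p' \ge q$ (so $\eta^{(q-1)p'} \le \eta^q$ since $\eta \le 1$), which the paper uses tacitly when it writes the Young step in a single line; this is a helpful clarification of why the cut-off must be raised to the power $q$ rather than $p$, but it is the same argument.
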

\begin{proof}
	Let $\eta\in C^\infty_c(B_R)$ be a cut off function such that $0\le \eta \le 1$, $\eta \equiv 1$ on $B_r$ and $|\nabla\eta|\le c(n)/(R-r)$. We take $\psi=\eta^q u_\lambda $ in \eqref{eq:weak_form_full} and use Young's inequality to get
	\begin{align*}
		\int_{B_R} (|\nabla u_\lambda |^p+a(x)|\nabla u_\lambda |^q) \eta^q \,dx 
		&\le   c \int_{B_R} (|\nabla u_\lambda|^{p-1}+a(x)|\nabla u_\lambda|^{q-1}) \frac{ u_\lambda}{R-r}  \eta^{q-1} \, dx \\
		&  \le \tfrac{1}{2}  \int_{B_R} (|\nabla u_\lambda|^{p}+a(x)|\nabla u_\lambda|^{q})   \eta^{q} \, dx\\
		&\qquad  + c\int_{B_R} \left(\frac{ u_\lambda}{R-r}\right)^p+ a(x) \left(\frac{ u_\lambda}{R-r}\right)^q \, dx,
	\end{align*}
	which implies the desired estimate.
\end{proof}

\begin{remark}\label{rmk:Caccio}
	If $u \in W^{1,\phi}(\Omega)$ is a weak supersolution of \eqref{eq:PDE_full}, then we have the same inequality as in the above lemma  with $u_\lambda$ replaced by $(-u)_\lambda$.
\end{remark}
Now, we prove the local boundedness of the weak solution of \eqref{eq:PDE_full}.
\begin{theorem}[$L^\infty$-estimates] \label{thm:local_bound_with_average} 
	Let $\phi\in\mathcal A$. If $u \in W^{1,\phi}(\Omega)$ is a weak subsolution of \eqref{eq:PDE_full}, then $u$ is locally bounded from above. Moreover, we have that for any ball $B$ with $2B\Subset \Omega$ and with $\int_{2B}\phi(x,|\nabla u|)\,dx \leq 1$,
	\begin{align}\label{eq:upperbound}
		(M_{2B} \phi)\bigg(\frac{\norm{(u -\mean{u}_{2B})_+}_{L^\infty(B)}}{r}\bigg)\lesssim
		\dashint_{2B} \phi\bigg(x,\frac{(u -\mean{u}_{2B})_+}{r}\bigg)\,dx.
	\end{align}
	
	Therefore, if $u \in W^{1,\phi}(\Omega)$ is a weak solution of \eqref{eq:PDE_full}, then $u$ is locally bounded with the following estimate:  for any ball $B$ with $2B\Subset \Omega$ with $\int_{2B}\phi(x,|\nabla u|)\,dx \leq 1$,
	\begin{align*}
		(M_{2B} \phi)\bigg(\frac{\norm{u -\mean{u}_{2B}}_{L^\infty(B)}}{r}\bigg)\lesssim
		\dashint_{2B} \phi\bigg(x,\frac{\abs{u -\mean{u}_{2B}}}{r}\bigg)\,dx.
	\end{align*}
	or equivalently
	\begin{align}\label{eq:supremum}
		\frac{\norm{u -\mean{u}_{2B}}_{L^\infty(B)}}{r} \lesssim (M_{2B}\phi)^{-1}\bigg(\dashint_{2B} \phi\bigg(x,\frac{\abs{u -\mean{u}_{2B}}}{r}\bigg)\,dx\bigg).	
	\end{align}
	
	Note that the implicit constants  depend only on $p,q,[\phi]_\mathcal{A}$ and $n$.
\end{theorem}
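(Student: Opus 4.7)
The plan is to prove the subsolution estimate \eqref{eq:upperbound} by a De~Giorgi iteration adapted to the non-homogeneous Orlicz setting, and then deduce \eqref{eq:supremum} by applying it to both $u$ and $-u$. Since adding a constant preserves the subsolution property and the gradient modular, I first replace $u$ by $u - \mean{u}_{2B}$ and set $w := u_+$.

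Fix a parameter $L > 0$ (to be chosen at the end) and introduce nested balls $B_{\rho_j}$ with $\rho_j = r(1+2^{-j})$, truncation levels $k_j = L(1-2^{-j})$, truncated functions $w_j := (u-k_j)_+$, and the modular averages $I_j := \dashint_{B_{\rho_j}} \phi(x, w_j/r)\,dx$. On an intermediate ball $B_{\tilde\rho_j}$ between $B_{\rho_{j+1}}$ and $B_{\rho_j}$ the \Caccioppoli{} estimate (Lemma~\ref{lem:Caccio}) yields
\begin{equation*}
	\int_{B_{\tilde\rho_j}} \phi(x, |\nabla w_j|)\,dx \lesssim 2^{qj} \int_{B_{\rho_j}} \phi(x, w_j/r)\,dx.
\end{equation*}
Multiplying $w_j$ by a cutoff $\eta$ supported in $B_{\tilde\rho_j}$ and equal to $1$ on $B_{\rho_{j+1}}$, so that $\eta w_j \in W^{1,1}_0(B_{\tilde\rho_j})$, the Sobolev--\Poincare{} inequality (Theorem~\ref{thm:sobolev-poincare}) then produces the reverse-H\"older type gain
\begin{equation*}
	\left(\dashint_{B_{\rho_{j+1}}} \phi^s(x, w_j/r)\,dx\right)^{1/s} \lesssim 2^{qj} I_j
\end{equation*}
for some $s>1$ depending only on $n, p, q, [\phi]_\mathcal{A}$; the hypothesis $\int_{2B}\phi(x,|\nabla u|)\,dx \le 1$ provides the smallness of the gradient modular needed to invoke Theorem~\ref{thm:sobolev-poincare}.

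The decisive step is passing from level $j$ to level $j+1$. Since $w_{j+1}$ vanishes off the super-level set $A_{j+1} := \{w > k_{j+1}\} \cap B_{\rho_{j+1}}$, H\"older's inequality with exponents $s,s'$ gives
\begin{equation*}
	I_{j+1} \lesssim \left(\tfrac{|A_{j+1}|}{|B_{\rho_{j+1}}|}\right)^{1/s'} 2^{qj} I_j.
\end{equation*}
To bound $|A_{j+1}|$ I exploit that $w_j \ge L\,2^{-j-1}$ on $A_{j+1}$, so that $\phi(x, L/r) \le 2^{(j+1)q}\phi(x, L\,2^{-j-1}/r) \le 2^{(j+1)q}\phi(x, w_j/r)$ pointwise on $A_{j+1}$, and integration yields $\int_{A_{j+1}} \phi(x, L/r)\,dx \lesssim 2^{qj}|B_{\rho_j}| I_j$. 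The reverse-H\"older inequality from Theorem~\ref{thm:A_loc_with_t} (equivalently $\phi(\cdot, L/r) \in A_\infty(B_{\rho_j})$ via \eqref{eq:RH}) converts this weighted-measure bound into the Lebesgue-measure bound
\begin{equation*}
	\tfrac{|A_{j+1}|}{|B_{\rho_j}|} \lesssim \big(\tfrac{2^{qj} I_j}{J}\big)^{\delta}, \qquad J := (M_{2B}\phi)(L/r),
\end{equation*}
with $\delta \in (0,1)$ coming from the reverse-H\"older exponent. Inserting and setting $Y_j := I_j / J$ leads to the standard De~Giorgi recursion $Y_{j+1} \le C\,b^j\,Y_j^{1+\delta/s'}$, which forces $Y_j \to 0$ whenever $Y_0$ lies below a universal threshold $\eta_0$ depending only on $n,p,q,[\phi]_\mathcal{A}$.

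To close the argument I choose $L$ so that $(M_{2B}\phi)(L/r) = \eta_0^{-1}\,\dashint_{2B}\phi(x, w/r)\,dx$; then $Y_0 = \eta_0$ by construction, the iteration converges, $w \le L$ a.e.\ on $B$, and applying $(M_{2B}\phi)$ to both sides is exactly \eqref{eq:upperbound}. The main obstacle is verifying the smallness $L/r \le 1/\norm{\indicator_{B_{\rho_j}}}_\phi$ required to invoke Theorem~\ref{thm:A_loc_with_t} at every scale: by the unit-ball property this is equivalent to $(M_{2B}\phi)(L/r) \lesssim 1/|2B|$, which by the choice of $L$ reduces to bounding $\dashint_{2B}\phi(x, w/r)\,dx$ by a universal multiple of $1/|2B|$; this follows from Theorem~\ref{thm:sobolev-poincare} applied to $u-\mean{u}_{2B}$ together with $\int_{2B}\phi(x,|\nabla u|)\,dx \le 1$, after fixing the universal constants compatibly. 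The two-sided estimate \eqref{eq:supremum} then follows by applying \eqref{eq:upperbound} to $u$ and $-u$ (using Remark~\ref{rmk:Caccio}) and combining.
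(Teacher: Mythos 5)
Your iteration has the same De~Giorgi skeleton as the paper's proof (nested balls, rising levels, Sobolev--\Poincare{} gain with exponent $s>1$), but you estimate the super-level-set measure $|A_{j+1}|$ by an \emph{$A_\infty$ reverse H\"older} step: you bound $\int_{A_{j+1}}\phi(x,L/r)\,dx$ via the iterates and then convert this weighted-measure bound into a Lebesgue-measure bound using Theorem~\ref{thm:A_loc_with_t}. The paper instead feeds the indicator $\indicator_{A_{j+1}}\tfrac{\lambda_{k+1}-\lambda_k}{r_k}$ directly into the generalized Jensen inequality; since that function is pointwise dominated by $v_k/r_k$, whose modular is bounded by a universal constant via \Poincare{}, the requisite smallness $\norm{f}_\phi\lesssim 1$ is automatic and does not depend on the final choice of $\lambda_\infty$. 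This is the crucial structural difference.

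The $A_\infty$ route introduces a circularity that you flag but do not resolve. Theorem~\ref{thm:A_loc_with_t} requires $L/r\le 1/\norm{\indicator_{2B}}_\phi$, but your choice $(M_{2B}\phi)(L/r)=\eta_0^{-1}\dashint_{2B}\phi(x,w/r)\,dx$ combined with the \Poincare{} bound $\dashint_{2B}\phi(x,w/r)\,dx\le C_1/|2B|$ only gives $(M_{2B}\phi)(L/r)\le C_1\eta_0^{-1}/|2B|$, i.e.\ $L/r\le (C_1/\eta_0)^{1/p}/\norm{\indicator_{2B}}_\phi$, which is strictly larger than $1/\norm{\indicator_{2B}}_\phi$. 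Theorem~\ref{thm:A_loc_with_t} can be extended to $t\le M/\norm{\indicator_Q}_\phi$, but the reverse H\"older exponent then degrades with $M$; this degraded exponent determines the De~Giorgi threshold $\eta_0$; and $\eta_0$ determines $M=(C_1/\eta_0)^{1/p}$. Breaking this loop requires a genuine fixed-point or bootstrap argument, which ``after fixing the universal constants compatibly'' does not supply; it is not even clear a priori that a consistent choice exists, since $\eta_0(M)$ decays super-polynomially while $M=(C_1/\eta_0)^{1/p}$ grows only polynomially. A second, minor, gap: when applying Theorem~\ref{thm:sobolev-poincare} to $\eta w_j$ you need the modular of $\nabla(\eta w_j)$ to be $\le 1$, but $|\nabla\eta|\,w_j\sim 2^j w_j/r$ is not a priori modular-bounded; the paper deals with this by rescaling the test function by $2^{-k}$ before invoking Sobolev--\Poincare{} and absorbing the factor afterward.
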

\begin{proof}
	Let $r>0$ be the radius of $B$, and  $\lambda_\infty$ be some positive constant which will be determined later. For $k\in \mathbb N\cup\{0\},$ we define $\lambda_k\coloneqq \lambda_{\infty}(1-2^{-k})$, $r_k\coloneqq r(1+2^{-k}),$ $\tilde{r}_{k}\coloneqq\tfrac{1}{2}(r_k+r_{k+1}),$  $B_k\coloneqq B_{r_k},$\,\,$\tilde{B}_k\coloneqq B_{\tilde{r}_k},$	$v_k \coloneqq (u-\mean{u}_{2B})_{\lambda_k},$ 
	$\eta_{k+1}\in C^{\infty}_c(\tilde B_{r_{k}},[0,1]),$ where
	$\eta_{k+1}=1$ on $B_{k+1}$ and $|\nabla \eta_{k+1}| \le \frac{2^{k+2}}{r},$
	and 
	$$
	V_k\coloneqq \dashint_{B_{k}}\phi\Big(x,\frac{v_k}{r_k}\Big)\,dx.
	$$ 
	Using H{\"o}lder's  inequality with $s>1$ we get
	\begin{align}\label{eq:Vk_with_average}
		V_{k+1}\leq C\Bigg(
		\dashint_{B_{k+1}}\phi^s \Big(x,\frac{v_{k+1}}{r_{k+1}}\Big)\,dx\Bigg)^{\frac{1}{s}}
		\Bigg(\frac{|\lbrace v_{k+1}>0 \rbrace\cap B_{k+1}|}{|B_{k+1}|}\Bigg)^{1-\frac{1}{s}}.
	\end{align}	
	Observe that using \eqref{eq:Sobolev}, we get the following \Poincare inequality:
	\begin{align}\label{eq:vk}
		\int_{B_k} \phi\Big( x, \frac{v_k}{r_k} \Big)\, dx
		\lesssim  \int_{2B} \phi\Big( x, \frac{|u-\mean{u}_{2B}|}{2r} \Big)\, dx \lesssim \int_{2B} \phi ( x,|\nabla u| )\, dx \le 1.
	\end{align}
	Then by Theorem~\ref{thm:Jensen} \ref{itm:Jensen_b} with $f=\indicator_{\{v_{k+1}>0\}\cap B_{k+1}}\frac{\lambda_{k+1}-\lambda_{k}}{r_k}$ it follows that 
	\begin{align*}
		\dashint_{B_k} \phi \Big(x, \frac{v_k}{r_k}\Big)\,dx
		&\ge
		\dashint_{B_k}
		\phi \Big(x, \indicator_{\{v_{k+1}>0\}\cap B_{k+1}}\frac{\lambda_{k+1}-\lambda_{k}}{r_k}\Big) 
		\,dx
		\\
		&\gtrsim\dashint_{B_k}\phi \Big(x, \frac{\abs{\{v_{k+1}>0\}\cap B_{k+1}}}{\abs{B_k}}\frac{\lambda_{k+1}-\lambda_{k}}{r_k}\Big)
		\,dx
		\\
		&\eqsim\dashint_{B_k}\phi \Big(x, \frac{\abs{\{v_{k+1}>0\}\cap B_{k+1}}}{\abs{B_{k+1}}}\frac{2^{-(k+1)} \lambda_{\infty}}{r}\Big)
		\,dx,
	\end{align*} 
	which yields
	\begin{align*}
		\frac{\abs{\{v_{k+1}>0\}\cap B_{k+1}}}{\abs{B_{k+1}}}
		\lesssim 2^k \bigg(\frac{r}{\lambda_{\infty}}\bigg)(M_{2B} \phi)^{-1}(V_k).
	\end{align*}
	We next estimate the first factor on the right hand side in \eqref{eq:Vk_with_average}. By the Sobolev-\Poincare inequality \eqref{eq:Sobolev1}, 
	\begin{align*}
		\begin{aligned}
			& \bigg(\dashint_{B_{k+1}}\phi^s\Big(x,\frac{v_{k+1}}{r_{k+1}}\Big)\,dx\bigg)^{\frac{1}{s}}
			\lesssim  2^{qk} \bigg(\dashint_{\tilde{B}_k}\phi^s\Big(x,\frac{2^{-k}v_{k+1}\eta_{k+1}}{\tilde{r}_{k}}\Big)\,dx\bigg)^{\frac{1}{s}}\\
			&\lesssim 2^{qk}\dashint_{\tilde{B}_k}\phi(x,|\nabla(2^{-k}v_{k+1}\eta_{k+1})|)\,dx
			\\
			&\lesssim 2^{qk} \bigg(\dashint_{\tilde{B}_k}\phi(x,2^{-k}v_{k+1}|\nabla\eta_{k+1}|)\, dx +  \dashint_{B_{\tilde r_k}}\phi
			(x,|\nabla v_{k+1}|)\, dx\bigg)
			\\
			&\lesssim 2^{qk}\dashint_{B_k}\phi\Big( x, \frac{v_{k+1}}{r}\Big)\, dx
			\\
			&\lesssim 2^{qk}\dashint_{B_k}\phi\Big( x, \frac{v_{k}}{r_k}\Big)dx= 2^{qk}V_k.
		\end{aligned}
	\end{align*} 
	Note that, when applying \eqref{eq:Sobolev1}, we use the following fact, which follows from \eqref{eq:vk}: 
	\begin{align*}
		&\int_{B_{\tilde r_k}}\phi(x,|\nabla(2^{-k}v_{k+1}\eta_{k+1})|)\,dx\\
		&\lesssim \int_{B_{\tilde r_k}}\phi(x,2^{-k}v_{k+1}|\nabla\eta_{k+1}|)\,dx + \int_{B_{\tilde r_k}}\phi(x,|\nabla v_{k+1}|)\,dx
		\\
		&\lesssim \int_{2B}\phi\Big( x, \frac{|u-\mean{u}_{2B}|}{2r}\Big)\,dx +\int_{2B}\phi(x,|\nabla u |)\, dx
		\\
		&\lesssim \int_{2B}\phi ( x, |\nabla u|)\, dx \le 1 .
	\end{align*}
	Using the inequalities above, after multiplying them we get that 
	\begin{align*}
		V_{k+1}\leq c_0 2^{(1-\frac{1}{s}+q)k}V_k\Big((M_{2B}\phi)^{-1}(V_k)\tfrac{r}{\lambda_{\infty}}\Big)^{1-\frac{1}{s}}
	\end{align*}
	for some $c_0>0$ depending on  $p,q,[\phi]_\mathcal{A}$ and $n$.	
	Finally, as in the proof of the convergence of the De~Giorgi iteration (see, e.g., \cite[Lemma 7.1]{GIUS}), if we choose $\lambda_\infty$ such that
	\begin{align*}
		(M_{2B}\phi)^{-1}(V_0)  =  c_0^{-\frac{1}{\vartheta}} L^{-\frac{q}{\vartheta^2}} \frac{\lambda_\infty}{r}, 
		\quad \text{where}\ \ L:= 2^{1-\frac{1}{s}+q} \ \ \text{and}\ \ \vartheta=1-\frac{1}{s},
	\end{align*}
	one can prove that $V_k \le L^{-\frac{q}{\vartheta}k} V_0$ for all $k\in \mathbb N \cup\{0\}$ and, in particular, $V_k\to 0$ as $k\to\infty$. For completeness, we prove the inequality $V_k \le L^{-\frac{q}{\vartheta}k} V_0$ by induction.
	For the base case $k=0$, the inequality is trivial. 
	For the induction step, suppose the inequality $V_k \le L^{-\frac{q}{\vartheta}k} V_0$ holds for some $k\ge 0$. Then,
	\begin{align*}
		V_{k+1} & \le c_0 L^{k}L^{-\frac{q}{\vartheta}k} V_0 \Big((M_{2B}\phi)^{-1}\big(L^{-\frac{q}{\vartheta}k} V_0\big)\tfrac{r}{\lambda_{\infty}}\Big)^{\vartheta}\\
		& \le c_0 L^{k}L^{-\frac{p}{\vartheta}k} V_0 \Big(L^{-\frac{1}{\vartheta}k} (M_{2B}\phi)^{-1}\big( V_0\big)\tfrac{r}{\lambda_{\infty}}\Big)^{\vartheta}\\
		&=c_0 L^{k}L^{-\frac{q}{\vartheta}k} V_0 \Big(L^{-\frac{1}{\vartheta}k} c_0^{-\frac{1}{\vartheta}} L^{-\frac{q}{\vartheta^2}}  \Big)^{\vartheta} =  L^{-\frac{q}{\vartheta}(k+1)} V_0.
	\end{align*}
	Considering this we conclude that, for almost every $x\in B$, we have
	\begin{equation*}
		(u(x)-\mean{u}_{2B})_+ \le\lambda_\infty = c_0^{\frac{1}{\vartheta}}L^{\frac{p}{\vartheta^2}} r (M_{2B} \phi)^{-1}(V_0),
	\end{equation*}
	which implies \eqref{eq:upperbound}.
\end{proof}

	\begin{corollary}[$L^\infty$-estimates] \label{thm:local_bound} 
		Let $\phi\in\mathcal A$. If $u \in W^{1,\phi}(\Omega)$ is a weak subsolution of \eqref{eq:PDE_full}, and $\tfrac{\abs{E}}{\abs{B}}\ge \delta_0$, where $E\coloneqq \{ x \in 2B : u(x) =0 \}$,  for some $\delta_0\in(0,1)$,
		then we have that for any ball $B$ with $2B\Subset \Omega$ with $\int_{2B}\phi(x,|\nabla u|)\,dx \le 1$,
		\begin{equation}\label{eq:sub_supestimate}\begin{aligned}
				&(M_{2B} \phi)\left( \frac{\| u_+\|_{L^\infty(B)}}{r}\right)
				\le C  \dashint_{2B}\phi\Big(x,\frac{u_+}{r}\Big)\, dx,
		\end{aligned}\end{equation}
		where the constant $C>0$ depends on $p,q,[\phi]_{\mathcal{A}}$ and the ratio $\delta_0$.
	\end{corollary}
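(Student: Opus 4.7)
The approach is to re-run the De~Giorgi iteration from the proof of Theorem~\ref{thm:local_bound_with_average}, but applied directly to $u_+$ (with starting level $\lambda_0=0$) instead of to $(u-\mean{u}_{2B})_+$. In that earlier proof, the mean-based \Poincare{} inequality furnished the baseline modular bound \eqref{eq:vk}; here the same role is played by the Sobolev-\Poincare{} inequality with a zero set, Corollary~\ref{cor:sobolev-poincare-zeroset}, which applies precisely because of the hypothesis $|E|/|B|\geq\delta_0$ (equivalently, $|E|/|2B|\geq\delta_0/2^n$).

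Concretely, I would set $\lambda_k := \lambda_\infty(1-2^{-k})$ and $v_k := (u-\lambda_k)_+$ (so that $v_0 = u_+$), and keep the radii $r_k$, balls $B_k$, cut-offs $\eta_{k+1}$ and quantities $V_k = \dashint_{B_k}\phi(x, v_k/r_k)\,dx$ exactly as in the proof of Theorem~\ref{thm:local_bound_with_average}. The Caccioppoli estimate (Lemma~\ref{lem:Caccio}) applied with $\lambda=\lambda_k$, the level-set bound derived from Theorem~\ref{thm:Jensen}\ref{itm:Jensen_b} applied to $\indicator_{\{v_{k+1}>0\}\cap B_{k+1}}\,(\lambda_{k+1}-\lambda_k)/r_k$, and the zero-boundary Sobolev-\Poincare{} inequality \eqref{eq:Sobolev1} applied to the compactly supported function $v_{k+1}\eta_{k+1}$ together reproduce verbatim the two main ingredients of that proof and lead to a recursion of the form
\begin{equation*}
V_{k+1}\le c_0(\delta_0)\,2^{(1-1/s+q)k}\,V_k\Bigl((M_{2B}\phi)^{-1}(V_k)\tfrac{r}{\lambda_\infty}\Bigr)^{1-1/s}.
\end{equation*}
The only place the zero-set hypothesis enters is the baseline estimate $V_k \lesssim \int_{2B}\phi(x,u_+/r)\,dx \le C(\delta_0)\int_{2B}\phi(x,|\nabla u|)\,dx \le C(\delta_0)$, which follows directly from Corollary~\ref{cor:sobolev-poincare-zeroset} applied to $u_+$ on the ball $2B$. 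Choosing $\lambda_\infty$ so that $(M_{2B}\phi)^{-1}(V_0) = c_0^{-1/\vartheta}L^{-q/\vartheta^2}\lambda_\infty/r$, the De~Giorgi iteration lemma gives $V_k\to 0$, hence $\|u_+\|_{L^\infty(B)}\le\lambda_\infty\lesssim r(M_{2B}\phi)^{-1}(V_0)$, which is equivalent to \eqref{eq:sub_supestimate}.

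The main technical point is that both Theorem~\ref{thm:Jensen}\ref{itm:Jensen_b} and the zero-boundary Sobolev-\Poincare{} inequality \eqref{eq:Sobolev1} presuppose $L^\phi$-norm at most one of their argument, whereas now the baseline modular bound is only $C(\delta_0)$ rather than $1$. This is handled by rescaling the relevant function by the factor $C(\delta_0)^{1/p}$ and reverting via the doubling estimates \eqref{eq:lower-upper1}--\eqref{eq:lower-upper2}: this introduces only a multiplicative constant depending on $p$, $q$ and $\delta_0$, which is absorbed into $c_0(\delta_0)$. No other step in the iteration requires adjustment, and the final constant $C$ in the statement inherits this $\delta_0$-dependence.
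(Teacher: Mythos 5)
Your proposal matches the paper's proof essentially verbatim: the paper also proves the Corollary by rerunning the De~Giorgi iteration from Theorem~\ref{thm:local_bound_with_average} with $v_k:=(u-\lambda_k)_+$ in place of $(u-\mean{u}_{2B})_{\lambda_k}$, replacing the mean-based baseline \Poincare{} estimate \eqref{eq:vk} by the one from Corollary~\ref{cor:sobolev-poincare-zeroset}, which is exactly where $\delta_0$ enters. Your explicit remark about the normalization issue (the baseline modular bound being $C(\delta_0)$ rather than $1$, fixed by rescaling through \eqref{eq:lower-upper1}) is a point the paper glosses over with a ``$\lesssim\cdots\le 1$'', so your version is slightly more careful, but the strategy is the same.
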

	\begin{proof}
		The proof is essentially the same as that of \eqref{eq:upperbound}, except that we replace the definition of $v_k$ by
		$v_k\coloneqq u_{\lambda_k}= (u-\lambda_k)_+$ and, instead of using \eqref{eq:vk}, apply the following \Poincare inequality, 	which follows	from Corollary~\ref{cor:sobolev-poincare-zeroset}:
		\begin{align*}
			\int_{2B}\phi\Big( x, \frac{|u|}{2r}\Big)\,dx \lesssim \int_{2B}\phi ( x, |\nabla u|)\, dx \le 1.
		\end{align*}
\end{proof}

\subsection{H\"older continuity}

We prove the H\"older continuity of the weak solutions of \eqref{eq:PDE_full}.  We first recall the following well-known estimate, which follows from the Sobolev-\Poincare inequality for $W^{1,1}$ functions; see, e.g., \cite[Lemma 3.5]{LU}. Let $B \subset \mathbb{R}^n$ be any ball and let $u \in W^{1,1}(B)$. For any real numbers $\mu$ and $\lambda$ with $\mu <\lambda$, and for any $s > 1$, consider the sets $\mathcal{D} := \{x \in B : u(x) > \lambda\}$, $\mathcal{E} := \{x \in B : u(x) \le \mu\}$, and $\mathcal{F} := \{x \in B : \mu < u(x) \le \lambda \}$. Then, the following inequality holds:
\begin{equation}\label{poincaretype1}
	(\lambda-\mu)|\mathcal D|^{\frac{n-1}{n}} \le C_n \frac{|B|}{|\mathcal E|}\int_{\mathcal F}|\nabla u|\,dx \le C_n\frac{|B|}{|\mathcal E|}\left(\int_{\mathcal F}|\nabla u|^{s}\,dx\right)^{\frac{1}{s}} |\mathcal F|^{1-\frac{1}{s}},
\end{equation}
where $C_n>0$ is a constant depending only on $n$.
The following elementary result will be used in the proof of the next lemma.	
\begin{lemma}\label{lem:sequence}
	Let $\{a_j\}_{j=1}^\infty$ be a sequence of nonnegative real numbers such that $\sum_{j=1}^\infty a_j \le 1$. 
	Suppose that there exist constants $c_0, \beta_0 > 0$ satisfying
	$\sum_{j=\ell+1}^\infty a_j \le c_0\, a_{\ell}^{\beta_0}$ for all $\ell \in \mathbb{N}$.
	Then, we have that
	$\sum_{j=2\ell}^\infty a_j \le c_0\, \ell^{-\beta_0}$ for all $\ell \in \mathbb{N}$.
\end{lemma}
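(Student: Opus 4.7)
The plan is to prove this by a straightforward pigeonhole argument on a block of $\ell$ consecutive terms. Introducing the tail sums $T_m := \sum_{j=m+1}^\infty a_j$, the hypothesis reads $T_\ell \le c_0 a_\ell^{\beta_0}$ and the desired conclusion reads $T_{2\ell-1} \le c_0 \ell^{-\beta_0}$. So the goal is to locate an index $k$ with $\ell \le k \le 2\ell - 1$ at which $a_k$ is already small enough, and then apply the hypothesis at that $k$.

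First, I would observe that the block $a_\ell, a_{\ell+1}, \ldots, a_{2\ell-1}$ consists of exactly $\ell$ nonnegative terms, and their total mass is bounded by the full sum $\sum_{j=1}^\infty a_j \le 1$. By pigeonhole, there must exist some index $k \in \{\ell, \ell+1, \ldots, 2\ell-1\}$ with $a_k \le \tfrac{1}{\ell}$.

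Next, I would apply the standing hypothesis at this particular~$k$ to obtain
\[
\sum_{j=k+1}^\infty a_j \;\le\; c_0\, a_k^{\beta_0} \;\le\; c_0\, \ell^{-\beta_0}.
\]
Because $k+1 \le 2\ell$, the tail of interest satisfies $\sum_{j=2\ell}^\infty a_j \le \sum_{j=k+1}^\infty a_j$, and the claim follows.

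I do not anticipate any serious obstacle here: the statement simply converts a one-step self-improvement estimate into a polynomial decay rate, and the pigeonhole step is exactly the right device because the summability constraint forces at least one term in any block of length $\ell$ to be $\le 1/\ell$. The only point requiring a little care is choosing the block length (namely $\ell$) and range (namely $\ell$ to $2\ell-1$) so that the chosen index satisfies $k+1 \le 2\ell$, which is why the conclusion is naturally phrased with the starting index $2\ell$ rather than, say, $\ell+1$.
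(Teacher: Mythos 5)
Your proof is correct and coincides with the paper's own argument: both use pigeonhole on the block $\{a_\ell,\dots,a_{2\ell-1}\}$ to find an index $k$ with $a_k\le 1/\ell$, then apply the hypothesis at $k$ and use $k+1\le 2\ell$ to bound the desired tail.
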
	
\begin{proof}
	Fix $\ell\in \mathbb N$. Since $\sum_{j=1}^\infty a_j\le 1$,  there exists $\ell_1\in \{\ell,\ell+1,\dots, 2\ell-1\}$ such that $a_{\ell_1}\le \frac{1}{\ell}$. Therefore, $\sum_{j= 2\ell}^\infty a_j \le \sum_{j= \ell_1+1}^\infty a_j\le  c_0 a_{\ell_1}^{\beta_0}\le  c_0 \ell^{-\beta_0}$
\end{proof}

We prove the following density improvement lemma, which will imply the H\"{o}lder continuity.

\begin{lemma}\label{lem:density0}
	Let $w \in W^{1,\phi}(\Omega)$ be a weak supersolution of \eqref{eq:PDE_full} and $\nu >0$.  For any  $\sigma\in(0,1]$, there exists a constant $\epsilon\in (0,\frac12)$ depending on $n,$ $p,$ $q,$ $[\phi]_{\mathcal A}$ and $\sigma$ (independent of $\nu$) such that if   $\int_{4B}\phi(x,|\nabla w|)\, dx \le 1$ with $4B\Subset\Omega$,  $w$ is nonnegative in $4B$, and 
	\begin{equation}\label{0densitycondition}
		\frac{|\{x\in 2B : w(x) \ge \nu \}|}{|2B|} \ge \sigma,
	\end{equation}
	then
	\begin{equation}\label{0positivity}
		\inf_{B} w \ge \epsilon \nu.
	\end{equation}
\end{lemma}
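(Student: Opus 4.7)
The plan is to reduce to a subsolution and run a De~Giorgi iteration on truncations of $v := (\nu-w)_+$. The main obstacle — as flagged in the paper's introduction — is that the inhomogeneity of $\phi$ precludes the simple rescaling $v\mapsto v/\nu$ used in the $p$-Laplacian case, so the $\nu$-dependence must be absorbed by a careful rescaling of the iteration quantity itself.

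\textbf{Reduction.} Set $v := (\nu-w)_+$. Applying Remark~\ref{rmk:Caccio} to the shifted supersolution $w - (\nu-k)$ at level $0$ shows that every truncation $(v-k)_+$ with $k\ge 0$ satisfies the subsolution Caccioppoli inequality of Lemma~\ref{lem:Caccio}. One has $0 \le v \le \nu$ on $4B$, $|\nabla v|\le |\nabla w|$ (so $\int_{4B}\phi(x,|\nabla v|)\,dx\le 1$), and $|\{v=0\}\cap 2B| = |\{w\ge\nu\}\cap 2B| \ge \sigma|2B|$. The conclusion $\inf_B w\ge\epsilon\nu$ is equivalent to $\|v\|_{L^\infty(B)}\le (1-\epsilon)\nu$.

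\textbf{Iteration.} Fix $\epsilon\in(0,\tfrac12)$ to be determined. Set $r_j := r(1+2^{-j})$, $B_j := B_{r_j}$, $k_j := (1-\epsilon)\nu - \epsilon\nu\,2^{-j}$, $v_j := (v-k_j)_+\le 2\epsilon\nu$, $A_j := \{v_j>0\}\cap B_j$, and
\[
V_j := \dashint_{B_j}\phi(x,v_j/r)\,dx.
\]
Combine (i) the Caccioppoli inequality for $v_{j+1}$ between $B_{j+1}$ and an intermediate ball inside $B_j$; (ii) the zero-set Sobolev-\Poincare{} inequality (Corollary~\ref{cor:sobolev-poincare-zeroset}) on $B_{j+1}$, whose constant is controlled via the bootstrap $|A_j|/|B_j|\le\tfrac12$; and (iii) a Chebyshev-type bound $|A_{j+1}|/|B_j|\lesssim V_j\,(r\,2^{j+1}/(\epsilon\nu))^p$ coming from the pointwise estimate $\phi(x,v_j/r)\ge p^{-1}(v_j/r)^p$ on $A_{j+1}$. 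This yields a recursion of the form
\[
V_{j+1}\le C_\sigma\,L^{\,j}\,V_j^{\,1+\vartheta}\,(r/(\epsilon\nu))^{p\vartheta}
\]
with $\vartheta>0$ and $L>1$ depending only on $n,p,q,[\phi]_{\mathcal A},\sigma$. The substitution $\tilde V_j := V_j(r/(\epsilon\nu))^p$ exactly cancels the $\nu$-dependent factor and produces the self-similar iteration $\tilde V_{j+1}\le C_\sigma L^j \tilde V_j^{1+\vartheta}$. By the standard iteration lemma (\cite[Lemma~7.1]{GIUS}), $\tilde V_j\to 0$ — whence $v\le(1-\epsilon)\nu$ a.e.\ on $B$ — provided $\tilde V_0\le \tilde V_\ast$ for a threshold $\tilde V_\ast$ depending only on the stated parameters.

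\textbf{Main obstacle: initial smallness.} It remains to guarantee $\tilde V_0\le \tilde V_\ast$ through a choice of $\epsilon$ depending only on $n,p,q,[\phi]_{\mathcal A},\sigma$ (crucially not on $\nu$). From $v_0\le 2\epsilon\nu$ and the upper bound in~\eqref{eq:lower-upper1},
\[
\tilde V_0 \le (M_{2B}\phi)(2\epsilon\nu/r)\,(r/(\epsilon\nu))^p = \tfrac{2^p}{p} + \tfrac{2^q}{q}\,\langle a\rangle_{2B}\,\epsilon^{q-p}(\nu/r)^{q-p}.
\]
The first summand is a universal constant, while the second carries the small factor $\epsilon^{q-p}$. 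The remaining task — the true heart of the argument — is to bound the potentially-large quantity $\langle a\rangle_{2B}(\nu/r)^{q-p}$ by constants depending only on $n,p,q,[\phi]_{\mathcal A},\sigma$. This is achieved by combining the zero-set Sobolev-\Poincare{} inequality applied to $v$ itself on $2B$ (using its $\sigma$-density zero set) and the gradient normalization $\int_{4B}\phi(x,|\nabla w|)\,dx\le 1$ with the generalized Jensen inequality (Theorem~\ref{thm:Jensen}): this is the step where the Muckenhoupt hypothesis $\phi\in\mathcal A$ is truly exploited, replacing the missing homogeneity. Once such a bound is in place, choosing $\epsilon$ sufficiently small forces $\tilde V_0\le\tilde V_\ast$ and closes the iteration.
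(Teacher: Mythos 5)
Your overall strategy — work with $v=(\nu-w)_+$, run a De~Giorgi iteration, and track the $\nu$-dependence carefully — is a natural first attempt, and you correctly flag the lack of homogeneity as the crux. But the specific rescaling you propose cannot close, and the hole is exactly where you locate ``the true heart of the argument.''

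The problem is structural. Your ansatz $\tilde V_j := V_j\,(r/(\epsilon\nu))^p$ multiplies the modular quantity by the \emph{inverse} of the homogeneous $p$-power. Since $\phi(x,t)\ge p^{-1}t^p$ with equality on $\{a=0\}$, this rescaling exactly cancels the factor $\epsilon^p$ that the truncation $v_0\le 2\epsilon\nu$ would otherwise contribute, and you obtain
\[
\tilde V_0 \;\le\; \frac{2^p}{p} \;+\; \frac{2^q}{q}\,\langle a\rangle_{2B}\,\epsilon^{q-p}\Big(\frac{\nu}{r}\Big)^{q-p}.
\]
Even if the second term could be made small, the first is a fixed constant independent of $\epsilon$, so $\tilde V_0$ is bounded away from zero and the condition $\tilde V_0\le\tilde V_\ast$ (with $\tilde V_\ast$ the threshold of the iteration lemma) cannot be enforced by choosing $\epsilon$ small. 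In other words, the only thing that can make $\tilde V_0$ small is the measure of the level set $\{v_0>0\}\cap 2B = \{w<2\epsilon\nu\}\cap 2B$, and the hypothesis \eqref{0densitycondition} only bounds this by $(1-\sigma)|2B|$, which is not small. You would need to \emph{first} prove that this level set has small measure — and that is precisely what is missing from your argument.

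Moreover, the proposed fix — bounding $\langle a\rangle_{2B}(\nu/r)^{q-p}$ by a constant depending on $n,p,q,[\phi]_{\mathcal A},\sigma$ — is not obtainable: taking $w\equiv\nu$ (a supersolution with vanishing gradient, satisfying all hypotheses for any $\nu$) and letting $\nu\to\infty$ shows no such uniform bound can hold. And even if it did, it would not rescue the $\frac{2^p}{p}$ term.

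The paper sidesteps both issues by separating the argument into two steps. Step 1 does \emph{not} use De~Giorgi at all: it applies the measure-theoretic isoperimetric-type inequality \eqref{poincaretype1} to the dyadic level sets $\mathcal F_j=\{k_{j+1}<w\le k_j\}$, controls $\big(\dashint_{2B}\indicator_{\mathcal F_j}|\nabla w|^s\big)^{1/s}\lesssim k_j/r$ via Lemma~\ref{lem:Improved_inside} plus Caccioppoli, and then the summability Lemma~\ref{lem:sequence} converts the resulting superlinear recursion for the $|\mathcal F_j|$ into a quantitative decay $|\{w\le 2^{1-2\ell}\nu\}\cap 2B|/|2B|\le c_1\sigma^{(1-n)/n}\ell^{-(1-1/s)}$. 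Only after this measure decay is established does Step~2 run a De~Giorgi iteration (in the form of Corollary~\ref{thm:local_bound} applied to $(2\epsilon\nu-w)_+$), where the initial smallness is provided precisely by the measure bound from Step~1 together with a reverse-H\"older gain. So the ingredient you are missing is the measure decay of the bad set as $\epsilon\downarrow 0$, obtained from an isoperimetric estimate rather than from the modular-iteration scheme itself.
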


\begin{proof} 
	We divide the proof into two steps.
	
	\textit{Step 1.}
	We first prove the following claim. 
	
	{\bf Claim.} \textit{For every $\tau>0$ there exists a small $\epsilon\in(0,\frac14)$ depending on $n,$ $p,$ $q,$ $[\phi]_{\mathcal A},$ $\sigma$ and $\tau$ such that 
	\begin{equation}\label{0density}
		\frac{|\{x \in 2B : w(x) \le 2\epsilon\nu \}|}{|2B|} \le \tau.
	\end{equation}}
	
	Let $r>0$ be the radius of $B$. For $j\in \mathbb N$, define $k_j=2^{1-j}\nu$ and $\mathcal F_j := \{x\in 2B :  k_{j+1} < w(x) \le k_j \}$. Note that
	\begin{equation}\label{0levelset_estimates}
		|\{w\ge k_j\}\cap 2B | \ge |\{w\ge \nu \}\cap 2B| \geq \sigma|2B| \quad \text{for all }\ j\in\mathbb N.
	\end{equation}
	Since $\int_{4B} \phi(x, |\nabla w|)\,dx \le 1,$
	we use Lemma  \ref{lem:Improved_inside} to obtain
	\begin{align}
		\dashint_{4B} \phi\!\left(
		x, \left(\fint_{4B} \indicator_{\mathcal F_j} |\nabla w|^{s}\,dy\right)^{\frac{1}{s}}
		\right)\,dx
		&\lesssim \dashint_{4B}  \phi\!\left(x,  \indicator_{\mathcal F_j}|\nabla w| \right)\,dx \notag\\
		&\lesssim \fint_{2B} \phi\!\left(x, |\nabla (k_j-w)_+| \right)\,dx
	\end{align}
	for some $s>1$.
Applying the Caccioppoli estimate (Lemma~\ref{lem:Caccio}) with the choice $u_\lambda = (-w)_{-k_j} = (k_j-w)_+$ (see also Remark~\ref{rmk:Caccio}), and using the nonnegativity of $w$, we obtain
	$$\begin{aligned}
		\fint_{2B}  \phi\left(x, |\nabla (k_j-w)_+| \right) \,dx
		\lesssim \fint_{4B} \phi\left(x, \frac{(k_j-w)_+}{r} \right) \,dx 
		\lesssim \fint_{4B} \phi\left(x, \frac{k_j}{r} \right) \,dx.
	\end{aligned}$$
	Combining the above two estimates and the monotonicity of $M_{4B}\phi$ yields that 
	\[
	\left(\fint_{2B}\indicator_{\mathcal F_j}|\nabla w|^{s}\,dy\right)^{\frac{1}{s}} \lesssim \frac{k_j}{r} = \frac{2^{1-j}\nu }{r}.
	\]
Hence, applying \eqref{poincaretype1} (with $B$ replaced by $2B$, $\lambda=k_j$, and $\mu=k_{j+1}$) combined with \eqref{0levelset_estimates} and the previous inequality, we obtain that
	\[\begin{split}
		2^{-j}\nu \sigma^{\frac{n-1}{n}} \frac{| 2B \cap\{  w \le k_{j+1}\}|}{|2B|}
		&\le (k_j-k_{j+1}) \frac{| 2B \cap\{  w \le k_{j+1}\}|}{|2B|}\left(\frac{|2B \cap \{ w > k_j\}|}{|2B|}\right)^{\frac{n-1}{n}}\\ 
		&\lesssim r   \left( \fint_{2B}\indicator_{\mathcal F_j}|\nabla w|^{s}\,dy\right)^{\frac{1}{s}} \left(\frac{|\mathcal F_j|}{|2B|}\right)^{1-\frac{1}{s}}\\
		&\lesssim  2^{-j}\nu \left(\frac{|\mathcal F_j|}{|2B|}\right)^{1-\frac{1}{s}},
	\end{split}\]
	which implies that
	\begin{align*}
		\frac{| 2B \cap\{  w \le k_{j+1}\}|}{|2B|}  \lesssim \sigma^{\frac{1-n}{n}}\left(\frac{|\mathcal F_j|}{|2B|}\right)^{1-\frac{1}{s}}.
	\end{align*}
This implies that $\abs{ \{ x \in 2B : w(x) = 0 \} } = 0$, and consequently we obtain
	\begin{align}\label{eq:estimate-for-lemma3.5}
		\sum_{i=j+1}^{\infty}\frac{\abs{\mathcal{F}_i}}{\abs{2B}}=\frac{| 2B \cap\{  w \le k_{j+1}\}|}{|2B|} \le c_1 \sigma^{\frac{1-n}{n}}\left(\frac{|\mathcal F_j|}{|2B|}\right)^{1-\frac{1}{s}}
	\end{align} 
	for some $c_1>0$ depending on $n$, $p$, $q$ and $[\phi]_{\mathcal A}$. Considering \eqref{eq:estimate-for-lemma3.5}, the fact that  $\sum_j |\mathcal{F}_j| \le |2B|$, and applying Lemma~\ref{lem:sequence} to $a_j= |\mathcal{F}_j| / |2B|$, $c_0=c_1\sigma^{\frac{1-n}{n}}$ and $\beta_0=1-\frac{1}{s}$, we obtain that
	\begin{align*}
		\frac{| 2B \cap\{  w \le k_{2\ell}\}|}{|2B|}  \le c_ 1 \sigma^{\frac{1-n}{n}}\ell^{-(1-\frac{1}{s})}
		\quad \text{for all }\ \ell\in\mathbb N.
	\end{align*}
	This completes the proof of the claim by choosing $\ell\in\mathbb N$ such that $c_1  \sigma^{\frac{1-n}{n}}\ell^{-(1-\frac{1}{s})}\le \tau$ and then
	$\epsilon=2^{-2\ell+1}$.
	
	\smallskip
	
	\textit{Step 2.}  We now complete the proof of the lemma. Assume that $\tau \in (0,\frac{1}{2})$.
	 Let $v:=(2\epsilon \nu -w)_+$. Then $v$ is a weak subsolution to \eqref{eq:PDE_full} with $\int_{4B}\phi(x,|\nabla v|)\,dx \le \int_{4B}\phi(x,|\nabla w|)\,dx\le 1$, and 
		it follows from \eqref{0density} that $|\{v=0\}\cap 2B| =  |\{w\ge 2\epsilon \nu \}\cap 2B| \ge (1- \tau)|2B|\ge \frac{1}{2}|2B|$. 
		Applying \eqref{eq:sub_supestimate} to $v$, combined with the reverse H\"older inequality \eqref{eq:phi_RH} (with $t=\frac{\|v\|_{L^\infty(2B)}}{r}$), \eqref{0density}, and the nonnegativity of $w$, we obtain
		\begin{align*}
			M_{2B}\phi \left(\frac{\|v\|_{L^\infty(B)}}{r}\right) 
			& \lesssim \dashint_{2B} \phi\left(x, \frac{v}{r}\right)\,dx\\
			& \lesssim \left(\frac{|\{v >0  \}\cap 2B|}{|2B|}\right)^{\frac{s-1}{s}} \left(\dashint_{2B}\phi^s\left(x,  \frac{\|v\|_{L^\infty(2B)}}{r}\right)\,dx\right)^{\frac{1}{s}} \\
			& \lesssim \left(\frac{|\{w < 2\epsilon \nu  \}\cap 2B|}{|2B|}\right)^{\frac{s-1}{s}}  \dashint_{2B}\phi\left(x,  \frac{\|v\|_{L^\infty(2B)}}{r}\right)\,dx  \\
			& \lesssim \tau^{\frac{s-1}{s}} M_{2B}\phi\left( \frac{\epsilon \nu}{r}\right).
		\end{align*}
		for some $s>1$ depending on $n$, $p$, $q$, and $[\phi]_{\mathcal A}$. To justify the application of \eqref{eq:phi_RH} and \eqref{eq:sub_supestimate}, we observe two facts. First, the zero set of $v$ satisfies
		$|\{v=0\}\cap 4B| \ge |\{w\ge 2\epsilon \nu \}\cap 2B| \ge (1- \tau)|2B|\ge \frac{1}{2^{n+1}}|4B|.$
		Second, we justify the applicability of \eqref{eq:sub_supestimate} by using the Poincaré inequality as follows
		\begin{align*}
			\int_{2B}\phi\left(x,  \frac{\|v\|_{L^\infty(2B)}}{r}\right)\,dx
			\lesssim   \int_{4B}\phi\left(x,  \frac{v}{r}\right)\,dx 
			&\lesssim \int_{4B}\phi (x,  |\nabla v|)\,dx \\
			&\le \int_{4B}\phi (x,  |\nabla w|)\,dx\le 1.
		\end{align*}
		Choosing $\tau \in (0, \frac{1}{2})$ small enough such that
		\begin{align*}
			M_{2B}\phi \left(\frac{\|v\|_{L^\infty(B)}}{r}\right) \le M_{2B}\phi\left( \frac{\epsilon \nu}{r}\right),
		\end{align*}
		and considering the monotonicity of $M_{2B}\phi$, we obtain $\|(2\epsilon \nu -w)_+\|_{L^{\infty}(B)} = \|v\|_{L^{\infty}(B)}\le \epsilon \nu$. This completes the proof.
\end{proof}

Now, we are ready to prove the H\"older continuity of the weak solution of \eqref{eq:PDE_full}, which establishes Theorem~\ref{thm:local_Holder}.

\begin{proof}[Proof of Theorem~\ref{thm:local_Holder}]
	
    We have already shown that the weak solution $u$ is locally bounded with the estimate \eqref{eq:supremum}. Therefore, it is enough to show that the following oscillation decay estimate holds
	\begin{align}\label{eq:oscillation_decay}
		\sup_{B}u - \inf_{B}u \le  \theta (\sup_{4B} u - \inf_{4B} u),
	\end{align} 
	for any $4B\Subset \Omega$ with $\int_{4B}\phi(x,|\nabla u|)\, dx\le 1$, for some $\theta\in(0,1)$ depending only on $n,$ $p,$ $q,$ and $[\phi]_{\mathcal A}$. Then by iterating \eqref{eq:oscillation_decay} for  shrinking balls we obtain the boundedness of the $\beta$-H\"older seminorm of $u$ for some $\beta\in(0,1)$ depending only on $n,$ $p,$ $q,$ and $[\phi]_{\mathcal A}$.
	
	In order to prove \eqref{eq:oscillation_decay}, we first observe that either
	$$
	\frac{\left|\left\{u- \inf_{4B} u \ge  \tfrac{1}{2}(\sup_{4B} u - \inf_{4B} u)\right\}\cap 2B \right|}{|2B|} \ge \frac12  
	$$
	or
	$$
	\frac{\left|\left\{\sup_{4B} u -u \ge  \tfrac{1}{2}(\sup_{4B} u - \inf_{4B} u)\right\}\cap 2B \right|}{|2B|} \ge \frac12 
	$$
	holds true.
	In the first case, by Lemma~\ref{lem:density0} with $w=u-\inf_{4B}u$ and $\nu=\frac{1}{2}(\sup_{4B} u - \inf_{4B} u)$, we have 
	$$
	\inf_{B}u- \inf_{4B} u \ge \frac{\epsilon}{2}\Big(\sup_{4B} u - \inf_{4B} u\Big),
	$$
	which implies that
	$$
	\sup_{B}u - \inf_{B}u \le  \sup_{B}u - \inf_{4B} u - \frac{\epsilon}{2}\Big(\sup_{4B} u - \inf_{4B} u\Big).
	$$
	Therefore, since $\sup_{B}u\le \sup_{4B}u$, we obtain \eqref{eq:oscillation_decay} with $\theta=1-\frac{\epsilon}{2}$.	
	On the other hand, in the second case, again by Lemma~\ref{lem:density0} with $w=\sup_{4B} u - u$ and $\nu=\frac{1}{2}(\sup_{4B} u - \inf_{4B} u)$, we have 
	$$
	\sup_{4B} u - \sup_{B}u  \ge \frac{\epsilon}{2}\Big(\sup_{4B} u - \inf_{4B} u\Big),
	$$
	which implies that
	$$
	\sup_{B}u - \inf_{B}u \le  \sup_{4B}u - \inf_{B} u - \frac{\epsilon}{2}\Big(\sup_{4B} u - \inf_{4B} u\Big).
	$$
	Therefore, since $\inf_{B}u\ge \inf_{4B}u$, we again obtain \eqref{eq:oscillation_decay} with $\theta=1-\frac{\epsilon}{2}$. Therefore, the proof is completed.
\end{proof} 

\begin{remark}[More general models]
  Our proof of $C^{0,\alpha}$-regularity of the solution relies on several properties like \Poincare{} and Sobolev-\Poincare{} inequalities in different forms. However, all of these properties are deduced from the generalized Jensen's inequality Theorem~\ref{thm:Jensen}. Hence, the results of our paper generalize to all models based on a generalized Young function that allow to get Theorem~\ref{thm:Jensen}. A well known example is certainly the Young function $t^p \omega(x)$ with $\omega \in A_p$ from \cite{FKS}. But we expect that more models satisfy Theorem~\ref{thm:Jensen}.
\end{remark}

\printbibliography
\end{document}